\documentclass[12pt]{amsart}
\usepackage{amsmath,amsfonts,latexsym,graphicx,amssymb,url}
\usepackage{amsmath,txfonts,pifont,bbding,pxfonts,manfnt}
\usepackage{psfrag}
\usepackage{wrapfig, subfigure,graphicx}
\usepackage{hyperref}
\usepackage[active]{srcltx}
\usepackage{pdfsync}
\usepackage[none]{hyphenat}
\setlength{\headheight}{15pt} 
\setlength{\topmargin}{10pt}
\setlength{\headsep}{30pt} 
\setlength{\textwidth}{15cm} 
\setlength{\textheight}{21.5cm}
\setlength{\oddsidemargin}{1cm} 
\setlength{\evensidemargin}{1cm} 
\newcommand{\dist}{\text{dist}} 

\newcommand{\R}{{\mathbb R}} 

\theoremstyle{plain}
\newtheorem{theorem}{Theorem}[section]

\newtheorem{lemma}[theorem]{Lemma}

\newtheorem{definition}[theorem]{Definition}
\newtheorem{remark}[theorem]{Remark}
\providecommand{\bysame}{\makebox[3em]{\hrulefill}\thinspace}



\interfootnotelinepenalty=10000
\begin{document}

\setcounter{equation}{0}










\title[The Near field refractor]
{The Near field refractor}
\author[C. E. Guti\'errez and Qingbo Huang]
{Cristian E. Guti\'errez\\
 and \\
 Qingbo Huang}
\thanks{\today\\The first author was partially supported
by NSF grant DMS--1201401. \\
The second author was partially supported
by NSF grant DMS--0502045.}
\address{Department of Mathematics\\Temple University\\Philadelphia, PA 19122}
\email{gutierre@temple.edu}
\address{Department of Mathematics and Statistics\\Wright State University\\Dayton, OH 45435}

\email{qhuang@noether.math.wright.edu}
\begin{abstract}
We present an abstract method in the setting of compact metric spaces which is applied to solve a number of problems in geometric optics.
In particular, we solve the one source near field refraction problem. That is,
we construct surfaces separating two homogenous media with different refractive indices that
refract radiation emanating from the origin into a target domain contained in an $n-1$ dimensional hypersurface.
The input and output energy are prescribed. This implies the existence of lenses focusing radiation in a prescribed manner.
\end{abstract}

\maketitle

\setcounter{equation}{0}
\section{Introduction}

We present in this paper an abstract method, having general interest, that can be applied to prove existence of solutions to a number of problems in geometric optics relative to refraction. The method is formulated in the abstract setting of compact metric spaces and it is based on the ideas of concave and convex mappings and the existence of families of functions that play the role of building blocks satisfying certain properties.
The application to show existence of solutions to various problems then consists in selecting the appropriate maps and the appropriate class of building blocks related to the specific problem. 
 
A main application of this method considered in the paper is to solve the one source near field refractor problem, that is,
to show existence of surfaces that refract radiation when the output and input intensities are prescribed. 
More precisely, we
have a domain $\Omega$ in the unit sphere $S^{n-1}$ and 
a domain $D$ contained in an $n-1$ dimensional surface in $\R^{n}$; 
$D$ is referred as the target domain or screen to be illuminated.
We also have two homogeneous and isotropic media I and II with refractive indices $n_1$ and $n_2$, respectively,
and suppose that from a point $O$ surrounded by medium I,
light emanates with intensity $f(x)$
for $x\in \Omega$, and $D$ is surrounded by media II.
We are also given in $D$ a Radon measure $\mu$ and the energy conservation equation 
$\int_{\Omega}f(x)\,dx=\mu(D)$.
We prove the existence of an optical surface
$\mathcal R$ parameterized by $\mathcal R=\{\rho(x)x: x\in\overline{\Omega}\}$,
interface between media I and II,
such that all rays refracted by $\mathcal R$ into medium II
illuminate the object $D$, and the prescribed illumination intensity distribution
at $D$ is $\mu$.
Of course, some conditions on the relative position of $D$ and the set of directions in $\Omega$ are needed to illuminate $D$.
This implies that one can design a lens refracting light beams so that the 
screen $D$ is illuminated in a prescribed way.
The lens is bounded by two optical surfaces, the ``outer'' surface is $\mathcal R$ and 
the ``inner'' one is a sphere with center at the point from where the radiation emanates.

To implement the application of our abstract result to this case, we determine that the building blocks to construct the surface solution are Descartes ovals.
Descartes ovals refract all light rays
emanating from a point $O$ into a fixed point $P$;
see Figures \ref{fig:ovalskappa<1} and \ref{fig:ovalskappa>1}.
The solution of the near field refraction problem depends in a essential way of 
a novel and very delicate study of the eccentricity of the ovals and its approximation properties,
Section \ref{sec:ovals}. 
This geometry is much more complicated than the one 
needed for the solution to the far field problem solved in 
\cite{gutierrez-huang:farfieldrefractor} using mass transport.
In particular, existence of solutions for the far field refractor problem 
can be obtained directly with this abstract method, Section \ref{sec:furtherapplications}.




Geometric optics problems -far and near field- 
received recently attention because of both its applications and 
their mathematical interest and difficulty.
To put our results in perspective, we enumerate some related results.
The problem of the far field reflector has been considered by several authors,
both mathematicians and engineers.
For example, existence and uniqueness up to dilations of solutions
for the far field reflector problem were proved by Caffarelli and Oliker 
in \cite{Caffarelli-Oliker:weakantenna} and X-J. Wang in \cite{Wang:antenna},
and for nonisotropic media by Caffarelli and Q. Huang \cite{caffarelli-huang:nonisotropicreflector}. 
$C^{1}$ regularity of solutions for the far field reflector was established in 
\cite{caffarelli-gutierrez-huang:antennaannals}.
The near field reflector problem was considered by 
Kochengin and Oliker in \cite{kochengin-oliker:nearfieldreflector}, and 
in recent work
by Karakhanyan and X-J. Wang \cite{karakhanyan-wang:nearfieldreflector}.
The far field refractor problem was for the first time considered by 
the authors, and existence and uniqueness up to dilations of solutions 
were established in \cite{gutierrez-huang:farfieldrefractor}.
Refraction problems are in general more involved that reflection problems because of 
physical constraints.
A difference between near and far field problems is that the latter
can be cast in the frame of optimal transportation. Instead, near field problems are
in general not optimal transportation problems which makes their mathematical treatment more difficult.
In particular, the presence of $\rho$ in the matrix $\mathcal A$ and on the right hand side of the pde \eqref{eq:monge-amperegeneralcase}, indicates the problem is not an optimal transport problem.
We mention that some results for the near field refractor problem have been announced in \cite{gutierrez-huang:nearfielrefractorermanno6thbirth}.

The plan of the paper is the following.
Section \ref{sec:abstract setup} contains the main results in the paper where we develop the abstract method.
Section \ref{sec:snelllaw} describes the Snell law and the physical constraints of refraction. 
Section \ref{sec:ovals} contains our study of Cartesian ovals and estimates that are essential in the application to the near field problem.
In Section \ref{sec:definitionofweaksolution} we
describe the assumptions on the domains $\Omega$ and $D$ and solve the near field problem when $\kappa<1$.
Similar existence results when $\kappa>1$ are proved in Section \ref{sec:existencekappa>1}.
In Section \ref{sec:furtherapplications} we show further applications of our method: the far field refractor problem and the second boundary value problem for the Monge-Amp\`ere equation.

Finally, the pde governing the near field refractor problem is a fully nonlinear equation of Monge-Amp\`ere type whose derivation is quite complicated and it is included in Appendix, Section \ref{sec:derivationofpde}. 

\setcounter{equation}{0}
\section{Continuous mappings and measure equations}\label{sec:abstract setup}
Let $X,Y$ be compact metric spaces, and $\omega$ a Radon measure on $X$.
Let $
Y^X$ denote the class of all set-valued maps $\Phi$ from $X$ to $Y$ such that $\Phi(x)$ is single valued for a.e. $x$ with respect to the measure $\omega$.
We say that the map $\Phi\in Y^X$ is continuous at $x_0\in X$ if given $x_k\to x_0$ and $y_k\in \Phi(x_k)$, there exists a subsequence $y_{k_j}$ and $y_0\in \Phi(x_0)$ such that $y_{k_j}\to y_0$, as $j\to \infty$.
Let $C(X,Y)$ denote the class of all $\Phi\in Y^X$ such that $\Phi$ is continuous in $X$;
and let $C_s(X,Y)$ denote the class of $\Phi\in C(X,Y)$ such that $\Phi(X)=Y$. 

\begin{lemma}\label{lm:radonmeasureontarget}
Given $\Phi\in C_s(X,Y)$, the set function defined by 
\[
M_\Phi(E)=\omega \left(\Phi^{-1}(E) \right)
\]
is a Radon measure on $Y$.
\end{lemma}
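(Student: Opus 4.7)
The plan is to establish the result in three steps: (i) show that preimages of closed subsets of $Y$ under $\Phi$ are closed in $X$; (ii) bootstrap this, modulo an $\omega$-null set, to show $\Phi^{-1}(E)$ is $\omega$-measurable for every Borel $E\subset Y$ and that $M_\Phi$ is well-defined; and (iii) verify countable additivity and Radon regularity.

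First I would fix a closed set $F\subset Y$ and show that $\Phi^{-1}(F):=\{x\in X:\Phi(x)\cap F\neq\emptyset\}$ is closed. Take $x_k\in\Phi^{-1}(F)$ with $x_k\to x_0$ and select $y_k\in\Phi(x_k)\cap F$. The continuity hypothesis on $\Phi$ produces a subsequence $y_{k_j}\to y_0$ with $y_0\in\Phi(x_0)$, and since $F$ is closed, $y_0\in F$. Hence $x_0\in\Phi^{-1}(F)$, so $\Phi^{-1}(F)$ is closed, hence Borel, hence $\omega$-measurable.

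Second, to pass from closed sets to all Borel sets I would circumvent the fact that a multi-valued inverse $\Phi^{-1}$ does not commute with complementation. Let $N\subset X$ be the $\omega$-null set off of which $\Phi$ is single-valued, and denote by $\Phi^\ast:X\setminus N\to Y$ the single-valued restriction. For single-valued maps, preimages respect all Boolean operations, so the family
\[
\mathcal{S}:=\bigl\{E\subset Y:(\Phi^\ast)^{-1}(E)\text{ is }\omega\text{-measurable}\bigr\}
\]
is a $\sigma$-algebra on $Y$. From step (i), $(\Phi^\ast)^{-1}(F)=\Phi^{-1}(F)\cap(X\setminus N)$ is $\omega$-measurable for every closed $F$, so $\mathcal{S}$ contains all closed sets and hence all Borel sets of $Y$. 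Since $\Phi^{-1}(E)$ and $(\Phi^\ast)^{-1}(E)$ differ only by a subset of the $\omega$-null set $N$, the quantity $M_\Phi(E)=\omega\bigl(\Phi^{-1}(E)\bigr)=\omega\bigl((\Phi^\ast)^{-1}(E)\bigr)$ is well-defined on the Borel $\sigma$-algebra of $Y$.

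Third, for pairwise disjoint Borel sets $\{E_i\}_{i\ge 1}$ the sets $(\Phi^\ast)^{-1}(E_i)$ are pairwise disjoint in $X\setminus N$, so countable additivity of $\omega$ gives
\[
M_\Phi\Bigl(\bigcup_i E_i\Bigr)=\omega\Bigl(\bigcup_i (\Phi^\ast)^{-1}(E_i)\Bigr)=\sum_i\omega\bigl((\Phi^\ast)^{-1}(E_i)\bigr)=\sum_i M_\Phi(E_i).
\]
Finally, the hypothesis $\Phi(X)=Y$ implies $\Phi^{-1}(Y)=X$, so $M_\Phi(Y)=\omega(X)<\infty$ because $X$ is compact and $\omega$ is Radon; as every finite Borel measure on a compact metric space is automatically inner and outer regular, $M_\Phi$ is Radon.

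The main obstacle is the set-valued character of $\Phi$: since $\Phi^{-1}$ in general fails to commute with complements and unions, one cannot directly generate a $\sigma$-algebra from the closed-set statement. The a.e.\ single-valuedness is exactly what is needed to reduce to the single-valued selection $\Phi^\ast$, for which the standard measurability arguments apply, while leaving $M_\Phi$ unchanged.
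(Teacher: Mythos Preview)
Your argument is correct and rests on the same key observation as the paper's: the a.e.\ single-valuedness of $\Phi$ is what rescues the failure of $\Phi^{-1}$ to commute with complements. The organizational difference is that you pass once and for all to the single-valued selection $\Phi^\ast$ on $X\setminus N$, after which the $\sigma$-algebra and additivity arguments are the standard ones for genuine functions; the paper instead works throughout with the multi-valued $\Phi^{-1}$ and handles complements via the identity
\[
\Phi^{-1}(E^c)=\bigl(\Phi^{-1}(E)\bigr)^c\cup\bigl(\Phi^{-1}(E)\cap\Phi^{-1}(E^c)\bigr),
\]
noting that the second set lies in the multi-valued locus and hence is $\omega$-null, and handles $\sigma$-additivity by a telescoping sum that subtracts off the overlaps. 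Your route is arguably tidier, since the bookkeeping of null correction sets is done once rather than at each step; the paper's route has the minor advantage of keeping the statement ``$\Phi^{-1}(E)$ is $\omega$-measurable'' literally, rather than only up to a null set. Both need (implicitly) the completeness of $\omega$, or equivalently that $N$ can be taken to be a measurable null set containing the multi-valued locus.
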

\begin{proof}
The  set $\mathcal C=\left\{E\subset Y: \Phi^{-1}(E) \text{ is $\omega$-measurable} \right\}$ is a $\sigma$-algebra containing all Borel sets in $Y$ ($\Phi^{-1}(E)=\{x\in X:\Phi(x)\cap E\neq \emptyset\}$).
Indeed, $\Phi^{-1}(\emptyset)=\emptyset$, $\Phi^{-1}(Y)=X$,  $\Phi^{-1}(\cup_{i=1}^\infty E_i)=\cup_{i=1}^\infty \Phi^{-1}(E_i)$,
$\Phi^{-1}(E^c)=\left( \Phi^{-1}(E)\right)^c\cup \left(\Phi^{-1}(E)\cap \Phi^{-1}(E^c)\right)$,
and $\omega\left( \left(\Phi^{-1}(E)\cap \Phi^{-1}(E^c)\right)\right)=0$.
If $K$ is compact in $Y$, then $\Phi^{-1}(K)$ is compact in $X$. 
In fact, let $\{x_k\}\subset \Phi^{-1}(K)$, and $y_k\in \Phi(x_k)\cap K$. Since $X$ is compact and $\Phi$ is continuous, there exist subsequences $\{x_{k_j}\}$ and $\{y_{k_j}\}$ such that $x_{k_j}\to x_0$ and $y_{k_j}\to y_0$ with $y_0\in \Phi(x_0)$,
that is, $x_0\in \Phi^{-1}(K)$.
To show the $\sigma$-additivity, let $\{E_j\}$ be a disjoint sequence of sets in $\mathcal C$. Then
$M_\Phi(\cup_{j=1}^\infty E_j)=\omega(\Phi^{-1}(E_1))+\sum_{k=2}^\infty \omega\left(\Phi^{-1}(E_k)\setminus \cup_{i=1}^{k-1}\Phi^{-1}(E_i) \right)=\sum_{k=1}^\infty M_\Phi(E_k)$.
\end{proof}

Let $C(X)$ denote the set of continuous functions in $X$ with the topology of uniform convergence.

\begin{definition}\label{def:definitionofmapTcontinuous}
If $\mathcal F\subset C(X)$ and $\mathcal T:\mathcal F\to C_s(X,Y)$, we say that $\mathcal T$ is continuous at $\phi\in \mathcal F$,  if whenever $\phi_j\in \mathcal F$, $\phi_j\to \phi$ uniformly in $X$,
$x_0\in X$, and $y_j\in \mathcal T(\phi_j)(x_0)$, then there exists a subsequence $y_{j_\ell}$ such that $y_{j_\ell}\to y_0$ with $y_0
\in \mathcal T(\phi)(x_0)$.
\end{definition}
\begin{lemma}\label{lm:continuityofmeasuresabstractapproach}
If $\mathcal F\subset C(X)$, $\mathcal T:\mathcal F\to C_s(X,Y)$ is continuous at $\phi\in \mathcal F$, and $\phi_j\to \phi$ in $C(X)$ with $\phi_j\in \mathcal F$, then
$M_{\mathcal T(\phi_j)}\to M_{\mathcal T(\phi)}$ weakly. 
\end{lemma}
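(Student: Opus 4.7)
The plan is to reduce weak convergence of Radon measures on the compact space $Y$ to the convergence of integrals $\int_Y f\,dM_{\mathcal T(\phi_j)}\to\int_Y f\,dM_{\mathcal T(\phi)}$ for every $f\in C(Y)$, and then verify this via a pointwise a.e.\ convergence of the almost everywhere defined single-valued selections of $\mathcal T(\phi_j)$, combined with dominated convergence.

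First, I would set up the selections. For each $j$ there exists an $\omega$-null set $N_j\subset X$ outside of which $\mathcal T(\phi_j)(x)$ is a single point; similarly $\mathcal T(\phi)(x)$ is a single point off a null set $N$. Let $N^\ast=N\cup\bigcup_j N_j$, still $\omega$-null, and for $x\in X\setminus N^\ast$ define $g_j(x)$ and $g(x)$ as the unique elements of $\mathcal T(\phi_j)(x)$ and $\mathcal T(\phi)(x)$. For any Borel set $E\subset Y$, the set $\mathcal T(\phi_j)^{-1}(E)$ differs from $g_j^{-1}(E)$ by an $\omega$-null set, so by Lemma~\ref{lm:radonmeasureontarget} the pushforward identity $M_{\mathcal T(\phi_j)}=(g_j)_\ast\omega$ holds, giving
\[
\int_Y f\,dM_{\mathcal T(\phi_j)}=\int_X f(g_j(x))\,d\omega(x)\qquad\text{for every }f\in C(Y),
\]
and likewise for $\mathcal T(\phi)$.

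Next, I would establish pointwise a.e.\ convergence $g_j(x)\to g(x)$. Fix $x_0\in X\setminus N^\ast$ and pass to any subsequence of $\{g_j(x_0)\}$. Applying the continuity hypothesis of Definition~\ref{def:definitionofmapTcontinuous} to the constant sequence $x_k\equiv x_0$ and the choice $y_k=g_k(x_0)\in\mathcal T(\phi_k)(x_0)$, extract a further subsequence converging to some $y_0\in\mathcal T(\phi)(x_0)=\{g(x_0)\}$, so $y_0=g(x_0)$. Since every subsequence of $\{g_j(x_0)\}$ has a further subsequence converging to the same point $g(x_0)$ in the compact metric space $Y$, the full sequence converges: $g_j(x_0)\to g(x_0)$.

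Finally, for $f\in C(Y)$ the function $f$ is bounded on the compact space $Y$, and continuity of $f$ together with the a.e.\ pointwise convergence $g_j\to g$ yields $f\circ g_j\to f\circ g$ $\omega$-a.e.\ on $X$. Since $\omega$ is a finite Radon measure on compact $X$, the dominated convergence theorem gives
\[
\int_X f(g_j(x))\,d\omega(x)\longrightarrow\int_X f(g(x))\,d\omega(x),
\]
which via the pushforward identity above is exactly $\int_Y f\,dM_{\mathcal T(\phi_j)}\to\int_Y f\,dM_{\mathcal T(\phi)}$. This proves weak convergence. The main subtlety is the upgrade from the merely subsequential convergence provided by the continuity of $\mathcal T$ to genuine pointwise convergence of the selections; this works only because $\mathcal T(\phi)(x)$ is almost everywhere a singleton, which forces the limit point to be uniquely determined off an $\omega$-null set.
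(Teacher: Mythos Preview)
Your proof is correct but follows a different route from the paper's. The paper verifies weak convergence via the Portmanteau criteria: it shows $\limsup_j M_{\mathcal T(\phi_j)}(K)\le M_{\mathcal T(\phi)}(K)$ for every compact $K\subset Y$ and $\liminf_j M_{\mathcal T(\phi_j)}(G)\ge M_{\mathcal T(\phi)}(G)$ for every open $G\subset Y$, each obtained through set-theoretic inclusions between preimages of the set-valued maps (for the open-set estimate the paper passes to the compact complement $G^c$ and uses that the set where $\mathcal T(\phi)$ is multivalued has $\omega$-measure zero). Your argument instead extracts $\omega$-a.e.\ single-valued selections $g_j,g$, upgrades the subsequential convergence in Definition~\ref{def:definitionofmapTcontinuous} to genuine pointwise convergence $g_j\to g$ a.e.\ (using that $\mathcal T(\phi)(x)$ is a singleton off a null set), and then concludes by dominated convergence against continuous test functions. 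Both approaches hinge on the same a.e.\ single-valuedness of $\mathcal T(\phi)$; the paper's version avoids any explicit discussion of measurable selections by working entirely with the set-valued preimages, while yours makes the underlying pointwise mechanism transparent and is arguably shorter once the pushforward identity $M_{\mathcal T(\phi_j)}=(g_j)_\ast\omega$ is in hand. One small remark: your phrase ``the constant sequence $x_k\equiv x_0$'' is unnecessary, since Definition~\ref{def:definitionofmapTcontinuous} is already stated for a fixed $x_0$; and when you pass to a subsequence of $\{g_j(x_0)\}$ you are implicitly using that the corresponding subsequence of $\{\phi_j\}$ still converges to $\phi$, which is immediate.
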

\begin{proof}
It is enough to show that 
\begin{equation}\label{eq:limsuplessoncompacts}
\limsup_{j\to \infty}M_{\mathcal T(\phi_j)}(K)\leq M_{\mathcal T(\phi)}(K),
\qquad \text{for all $K\subset Y$ compact,}
\end{equation}
and
\begin{equation}\label{eq:liminfbiggeronopensets}
\liminf_{j\to \infty}M_{\mathcal T(\phi_j)}(G)\geq M_{\mathcal T(\phi)}(G),
\qquad \text{for all $G\subset Y$ open.}
\end{equation}
To prove \eqref{eq:limsuplessoncompacts}, we show that $\limsup_{j\to \infty} \mathcal T(\phi_j)^{-1}(K)\subset \mathcal T(\phi)^{-1}(K)$.
Let $x_0\in \mathcal T(\phi_j)^{-1}(K)$. So there is $y_j\in \mathcal T(\phi_j)(x_0)\cap K$, and 
since $\mathcal T$ is continuous at $\phi$, there exists a subsequence $y_{j_\ell}$ such that $y_{j_\ell}\to y_0$ with $y_0
\in \mathcal T(\phi)(x_0)$. Since $y_0\in K$, we have $x_0\in \mathcal T(\phi)^{-1}(K)$.
Hence
\begin{align*}
\limsup_{j\to \infty}M_{\mathcal T(\phi_j)}(K)&=
\limsup_{j\to \infty}\omega\left(\mathcal T(\phi_j)^{-1}(K)\right)\\
&\leq
\omega\left(\limsup_{j\to \infty}\mathcal T(\phi_j)^{-1}(K)\right)
\leq
\omega\left(\mathcal T(\phi)^{-1}(K) \right)=M_{\mathcal T(\phi)}(K).
\end{align*}

To prove \eqref{eq:liminfbiggeronopensets}, we show that $\mathcal T(\phi)^{-1}(G)\setminus E_0\subset 
\liminf_{j\to \infty}\mathcal T(\phi_j)^{-1}(G)$, with $\omega(E_0)=0$.
Indeed,
\begin{align*}
\limsup_{j\to \infty}\left( \mathcal T(\phi_j)^{-1}(G)\right)^c
&\subset 
\limsup_{j\to \infty}\left[\left( \mathcal T(\phi_j)^{-1}(G)\right)^c\cup \left(\mathcal T(\phi_j)^{-1}(G)\cap \mathcal T(\phi_j)^{-1}(G^c)\right)\right]\\
&=
\limsup_{j\to \infty}\mathcal T(\phi_j)^{-1}(G^c)\\
&\subset \mathcal T(\phi)^{-1}(G^c)\qquad \text{since $G^c$ is compact because $Y$ is compact}\\
&=
\left(\mathcal T(\phi)^{-1}(G)\right)^c\cup \left[\mathcal T(\phi)^{-1}(G)\cap \mathcal T(\phi)^{-1}(G^c)\right]
:=\left(\mathcal T(\phi)^{-1}(G)\right)^c\cup E_0,
\end{align*}
with $\omega(E_0)=0$, since $\mathcal T(\phi)$ is single valued except on a set of $\omega$-measure zero.
Therefore 
\begin{align*}
\liminf_{j\to \infty}M_{\mathcal T(\phi_j)}(G)&=
\liminf_{j\to \infty}\omega\left(\mathcal T(\phi_j)^{-1}(G)\right)\\
&\geq
\omega\left(\liminf_{j\to \infty}\mathcal T(\phi_j)^{-1}(G)\right)
\geq
\omega\left(\mathcal T(\phi)^{-1}(G) \right)=M_{\mathcal T(\phi)}(G).
\end{align*}\end{proof}

\subsection{Concave case}\label{subsect:generalconcavecase}
Let
\[
C^+(X)=\{f: \text{$f$ is continuous and positive in $X$}\}.
\]
In this subsection, we consider classes $\mathcal F\subset C^+(X)$ satisfying the following condition:
\begin{enumerate}
\item[(A1)] if $f_1,f_2\in \mathcal F$, then $f_1\wedge f_2=\min\{f_1,f_2\}\in \mathcal F$. 
\end{enumerate}

%
{\it We say that the class $\mathcal F\subset C^+(X)$ is $\mathcal T$-concave
if $\mathcal F$ satisfies (A1) and there 
exists a map $\mathcal T:\mathcal F\to C_s(X,Y)$  that
is continuous at each $\phi\in \mathcal F$ and the following condition holds}
\begin{enumerate}
\item[(A2)] if $\phi_1(x_0)\leq \phi_2(x_0)$, then $\mathcal T(\phi_1)(x_0)\subset \mathcal T(\phi_1\wedge \phi_2)(x_0)$.
\end{enumerate}

We also introduce the following condition on the class $\mathcal F$:
\begin{enumerate}
\item[(A3)] For each $y_0\in Y$ there exists an interval $(\alpha_{y_0},\beta_{y_0})$
and a family of functions
$\left\{h_{t,y_0}(x)\right\}_{\alpha_{y_0}<t<\beta_{y_0}}\subset \mathcal F$ satisfying 
\begin{enumerate}
\item[(a)] $y_0\in \mathcal T(h_{t,y_0})(x)$ for all $x\in X$,
\item[(b)] $h_{t,y_0}\leq h_{s,y_0}$ for $t\leq s$,
\item[(c)] $h_{t,y_0}\to 0$ uniformly as $t\to \alpha_{y_0}$,
\item[(d)] $h_{t,y_0}$ is continuous in $C(X)$ with respect to $t$, i.e., 
$\max_{x\in X}|h_{t',y_0}(x)-h_{t,y_0}(x)|\to 0$ as $t'\to t$, for $\alpha_{y_0}<t<\beta_{y_0}$.
\end{enumerate}
\end{enumerate}

\begin{remark}\label{rmk:Mofhequalsdelta}\rm
Notice that (A3)(a) implies that $M_{\mathcal T(h_{t,y_0})}=\omega(X)\,\delta_{y_0}$ for $\alpha_{y_0}<t<\beta_{y_0}$.
Because if $E\subset Y$ is a Borel set with $y_0\in E$, then from (A3)(a), $X\subset \mathcal T(h_{t,y_0})^{-1}(y_0)\subset 
\mathcal T(h_{t,y_0})^{-1}(E)\subset X$ and so $M_{\mathcal T(h_{t,y_0})}(E)=\omega(X)$.
If $y_0\notin E$, then $y_0\in Y\setminus E$ and so $M_{\mathcal T(h_{t,y_0})}(Y\setminus E)=\omega(X)$
and then $M_{\mathcal T(h_{t,y_0})}(E)=0$.
\end{remark}

The following is the main theorem in this section. We solve a measure equation when the given measure in $Y$ is discrete.
\begin{theorem}\label{thm:abstractcasediscritecase}
Let $X,Y$ be compact metric spaces and $\omega$ is a Radon measure in $X$.
Let $p_1,\cdots ,p_N$ be distinct points in $Y$, and $g_1,\cdots ,g_N$ be positive numbers with $N\geq 2$.

Let $\mathcal F\subset C^+(X)$ and  $\mathcal T:\mathcal F\to C_s(X,Y)$ be such that 
$\mathcal F$ is $\mathcal T$-concave and (A3) holds. 

Assume that 
\begin{equation}\label{eq:conservationofenergyomega}
\omega (X)=\sum_{i=1}^N g_i,
\end{equation}
and there exists $\rho_0=\min_{1\leq i\leq N}h_{b_i^0,p_i}$ such that 
$\mathcal M_{\mathcal T(\rho_0)}(p_i)\leq g_i$ for $2\leq i \leq N$.
Then there exist $b_i\in (\alpha_{p_i},\beta_{p_i})$, $2\leq i\leq N$, 
such that the function, with $b_1=b_1^0$,
\[
\rho(x)=\min_{1\leq i \leq N}h_{b_i,p_i}(x)
\]
satisfies
\[
M_{\mathcal T(\rho)}=\sum_{i=1}^N g_i\,\delta_{p_i}.
\]
\end{theorem}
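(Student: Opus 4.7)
The plan is to adapt a Caffarelli--Oliker-type variational approach to this abstract setting. With $b_1 = b_1^0$ fixed, I parametrize candidates by $b = (b_2,\ldots,b_N)$ and set $\rho_b(x) = \min_{1\le i\le N} h_{b_i,p_i}(x)$. Consider the admissible set
\[
\mathcal{A} = \bigl\{\,b : \alpha_{p_i} < b_i \le b_i^0,\; M_{\mathcal T(\rho_b)}(p_i) \le g_i \text{ for } 2\le i \le N\,\bigr\},
\]
which is nonempty by hypothesis, and I look for a minimizer of $J(b) = \sum_{i=2}^N b_i$ on $\mathcal A$. The whole strategy is then to show that at such a minimizer every constraint is automatically saturated.

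A preliminary step is the \emph{concentration identity} $M_{\mathcal T(\rho_b)} = \sum_{i=1}^N M_{\mathcal T(\rho_b)}(p_i)\,\delta_{p_i}$. At $\omega$-a.e.\ $x$, $\mathcal T(\rho_b)(x)$ is single-valued, and for the a.e.\ unique index $j(x)$ achieving $h_{b_{j(x)},p_{j(x)}}(x) = \rho_b(x)$, an iteration of (A2) through the other building blocks (legitimate because $\mathcal F$ is closed under $\wedge$ by (A1)) yields $\mathcal T(h_{b_{j(x)},p_{j(x)}})(x) \subset \mathcal T(\rho_b)(x)$, and then (A3)(a) forces $\mathcal T(\rho_b)(x) = \{p_{j(x)}\}$. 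In particular $\sum_{i} M_{\mathcal T(\rho_b)}(p_i) = \omega(X) = \sum_i g_i$. Closedness of $\mathcal A$ now follows: if $b^{(k)} \to b^*$ with every $b_i^* > \alpha_{p_i}$, then (A3)(d) gives $\rho_{b^{(k)}} \to \rho_{b^*}$ uniformly, Lemma~\ref{lm:continuityofmeasuresabstractapproach} gives weak convergence $M_{\mathcal T(\rho_{b^{(k)}})} \to M_{\mathcal T(\rho_{b^*})}$, and since all measures are carried by the finite set $\{p_1,\ldots,p_N\}$ this upgrades to pointwise convergence of atom masses, so the constraints pass to the limit.

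To secure a minimizer I rule out escape to the boundary along a minimizing sequence. Set $I = \{i : b_i^{(k)} \to \alpha_{p_i}\}$; since $b_1 = b_1^0$ is frozen, $1 \notin I$. By (A3)(c), $h_{b_i^{(k)},p_i} \to 0$ uniformly for $i \in I$, while (A3)(d) and positivity keep $h_{b_j^{(k)},p_j}$ uniformly bounded below on the compact $X$ for $j \notin I$. Hence for $k$ large the minimum defining $\rho_{b^{(k)}}$ is attained only at indices in $I$ for every $x$, and the concentration identity then forces $M_{\mathcal T(\rho_{b^{(k)}})}$ to be supported on $\{p_i : i \in I\} \subset \{p_2,\ldots,p_N\}$. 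This gives $\omega(X) = \sum_{i \in I} M(p_i) \le \sum_{i=2}^N g_i = \omega(X) - g_1 < \omega(X)$, a contradiction. Therefore $\mathcal A$ sits inside a compact subset of $\prod (\alpha_{p_i},\beta_{p_i})$ and $J$ attains its infimum at some $b^*$.

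The last step is a perturbation argument showing $M_{\mathcal T(\rho_{b^*})}(p_i) = g_i$ for every $i \ge 2$. If instead $M(p_j) < g_j$ for some $j$, then decreasing $b_j^*$ slightly (other components fixed) monotonically enlarges $\{x : \rho_{b^*} = h_{b_j,p_j}\}$; by the concentration identity, $M(p_j)$ does not decrease and each $M(p_i)$ with $i \ne j$ does not increase. By the continuity of $b \mapsto M_{\mathcal T(\rho_b)}(p_i)$ established above, a sufficiently small decrease keeps $M(p_j) < g_j$ and leaves the other constraints satisfied, producing a strictly smaller value of $J$ and contradicting the minimality of $b^*$. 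The total-mass identity $\sum M(p_i) = \omega(X) = \sum g_i$ then forces $M(p_1) = g_1$. The main obstacle I anticipate is making the no-escape argument fully rigorous when several building blocks collapse simultaneously and at unknown relative rates; the uniform decay in (A3)(c), the uniform continuity in (A3)(d), and above all the fact that the first coordinate $b_1$ is frozen and provides a bounded-below block are what let the argument go through.
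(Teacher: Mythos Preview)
Your proof is correct and follows essentially the same approach as the paper: the same admissible set $\mathcal A$, the same concentration identity, the same minimization of $\sum_{i\ge 2} b_i$, and the same perturbation at the minimizer. The only cosmetic differences are that the paper proves the no-escape bound pointwise (picking $x_0\in \mathcal T(\rho)^{-1}(p_1)$ outside the singular set and showing $h_{b_i,p_i}(x_0)\ge h_{b_1,p_1}(x_0)>0$) rather than via your total-mass contradiction, and establishes continuity of $b\mapsto M_{\mathcal T(\rho_b)}(p_i)$ by a direct $\limsup$/$\liminf$ argument rather than invoking weak convergence plus the finite-support upgrade.
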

\begin{proof}
Let $b_1=b_1^0$ and define the set
\[
W=
\left\{(b_2,\cdots ,b_N):\alpha_{p_i}<b_i\leq b_i^0; M_{\mathcal T(\rho)}(p_i)\leq g_i,i=2,\cdots ,N \right\}.
\]

By the assumptions, $(b_2^0, \cdots, b_N^0)\in W$.

{\bf Step 1:} There exist constants $L$, $\epsilon_0>0$ such that for all $(b_2,\cdots ,b_N)\in W$ 
we have $b_i\geq \alpha_{p_i}+\epsilon_0$ if $\alpha_{p_i}>-\infty$, and $b_i\geq -L$
if $\alpha_{p_i}=-\infty$, for $2\leq i \leq N$.

To prove this, we first show that the measure $M_{\mathcal T(\rho)}$ is supported on $\{p_1, \cdots, p_N\}$. This follows if we show that the set 
$E=\mathcal T(\rho)^{-1}\left(Y\setminus \{p_1,\cdots ,p_N\}\right)\subset N_0$, where $N_0:=\{x\in X:\text{$\mathcal T(\rho)(x)$ is not a singleton}\}$ has $\omega$-measure zero. 
If $z_0\in E$,
then $\mathcal T(\rho)(z_0)\cap (Y\setminus \{p_1,\cdots ,p_N\})\neq \emptyset$.
Therefore there is $p\in \mathcal T(\rho)(z_0)$ with $p\neq p_i$ for $1\leq i\leq N$.
On the other hand, $\rho(z_0)=h_{b_k,p_k}(z_0)$ for some $1\leq k\leq N$.
From (A2) we then have $\mathcal T(h_{b_k,p_k})(z_0)\subset \mathcal T(\rho)(z_0)$, but $p_k\in \mathcal T(h_{b_k,p_k})(z_0)$ by (A3)(a), and so $\mathcal T(\rho)$ is not single-valued at $z_0$.

Consequently, from \eqref{eq:conservationofenergyomega} we get 
$M_{\mathcal T(\rho)}(p_1)\geq g_1>0$, and so $\omega(\mathcal T(\rho)^{-1}(p_1))>0$.
Pick $x_0\in \mathcal T(\rho)^{-1}(p_1)\setminus N_0$.
We claim that $h_{b_1,p_1}(x_0)\leq  h_{b_i,p_i}(x_0)$ for $i\geq 2$. Otherwise, there is some $i\geq 2$ such that 
$h_{b_i,p_i}(x_0)<h_{b_1,p_1}(x_0)$. Pick $j\geq 2$ such that $h_{b_j,p_j}(x_0)=\min_{2\leq i \leq N} h_{b_i,p_i}(x_0)$.
Then $\rho(x_0)=h_{b_j,p_j}(x_0)$. Hence from (A2), $\mathcal T(h_{b_j,p_j})(x_0)\subset 
\mathcal T\left( \rho\wedge h_{b_j,p_j}\right)(x_0)=\mathcal T\left( \rho\right)(x_0)=p_1$.
But from (A3)(a), $p_j\in \mathcal T(h_{b_j,p_j})(x_0)$, a contradiction and the claim is proved.
Since $h_{b_1,p_1}(x_0)\geq C>0$, we then get $h_{b_i,p_i}(x_0)\geq C>0$ for all $i\geq 2$.
From (A3)(c), $h_{b_i,p_i}(x_0)\to 0$ as $b_i\to \alpha_{p_i}$, and therefore Step 1 is proved.

{\bf Step 2:} $W$ is compact.

It is enough to show that $M_{\mathcal T(\rho)}(p_i)$ is continuous in $b'= (b_2,\cdots ,b_N)$ for each $1\leq i\leq N$.
Let $b'_m= (b_2^m,\cdots ,b_N^m)\in W$ converging to $b'_*= (b_2^*,\cdots ,b_N^*)$.
Then by (A3)(d), $\rho_m=\left(\min_{i\neq 1}h_{b_i^m,p_i}\right) \wedge h_{b_1,p_1}\to \rho^*=\left(\min_{i\neq 1}h_{b_i^*,p_i}\right) \wedge h_{b_1,p_1}$ uniformly as $m\to \infty$.
Therefore, as in the proof of Lemma \ref{lm:continuityofmeasuresabstractapproach}, we obtain
$\limsup_{m\to \infty}M_{\mathcal T(\rho_m)}(p_i)\leq M_{\mathcal T(\rho^*)}(p_i)$ for $i=1,\cdots ,N$, and
$\liminf_{m\to \infty}M_{\mathcal T(\rho_m)}(G)\geq M_{\mathcal T(\rho^*)}(G)$ for each $G$ open.
Now choose $G$ open such that $p_i\in G$ and $p_j\notin G$ for $j\neq i$. Then
$M_{\mathcal T(\rho_m)}(G)=
\omega\left(\mathcal T(\rho_m)^{-1}(p_i) \right)
+\omega\left(\mathcal T(\rho_m)^{-1}(G\setminus \{p_i\}) \right)$, but
$\omega\left(\mathcal T(\rho_m)^{-1}(G\setminus \{p_i\}) \right)\leq \omega\left(\mathcal T(\rho_m)^{-1}(Y\setminus \{p_1,\cdots ,p_N\}) \right)=0$. 
We then obtain $\liminf_{m\to \infty}M_{\mathcal T(\rho_m)}(p_i)\geq M_{\mathcal T(\rho^*)}(p_i)$ and the continuity follows.

{\bf Step 3:} Existence of solutions.

The function $z=b_{2}+\cdots +b_{N}$ attains its minimum on
$W$ at some point $(a_{2},\cdots ,a_{N})$.
We claim that $\rho(x)=\left(\min_{2\leq i \leq N} h_{a_i,p_i}\right)\wedge h_{b_1,p_1}$ is the desired solution.
Otherwise, assume for example that $M_{\mathcal T(\rho)}(p_2)<g_2$.
Let $\bar a=(a_{2}-\epsilon,a_3,\cdots ,a_{N})$ and $\bar \rho(x)=\left(\min_{2\leq i \leq N} h_{\bar a_i,p_i}\right)\wedge h_{b_1,p_1}$. By continuity $M_{\mathcal T(\bar \rho)}(p_2)<g_2$ for all $\epsilon$ sufficiently small.

For $i\geq 3$, we claim that $\mathcal T(\bar \rho)^{-1}(p_i)\subset \mathcal T(\rho)^{-1}(p_i)$ except on a set of $\omega$-measure zero. Indeed, if $x_0\in \mathcal T(\bar \rho)^{-1}(p_i)$ and $\mathcal T(\bar \rho)(x_0)$ is a single point, then $p_i=
\mathcal T(\bar \rho)(x_0)$. 
Notice that $\bar \rho(x_0)=h_{a_i,p_i}(x_0)$. Otherwise, there exists $j\neq i$, $1\leq j\leq N$, such that $\bar \rho(x_0)=h_{\bar a_j,p_j}(x_0)<h_{a_i,p_i}(x_0)$ (we set $\bar a_1=b_1$). 
Then from (A2), $\mathcal T(h_{\bar a_j,p_j})(x_0)\subset \mathcal T(\bar \rho)(x_0)$, and by (A3)(a) $p_j\in \mathcal T(h_{\bar a_j,p_j})(x_0)$ and so $p_j=p_i$, a contradiction.
From (A3) (b), $h_{\bar a_2,p_2}\leq h_{a_2,p_2}$ so $\bar \rho(x)\leq \rho(x)$, and therefore 
$\bar \rho(x_0)=h_{a_i,p_i}(x_0)=\rho(x_0)$.
Hence, and once again from (A2), 
$\mathcal T(h_{a_i,p_i})(x_0)\subset \mathcal T(\rho\wedge h_{a_i,p_i})(x_0)=
\mathcal T(\rho)(x_0)$. Thus, $p_i\in \mathcal T(\rho)(x_0)$ from (A3)(a), so $x_0\in \mathcal T(\rho)^{-1}(p_i)$, 
and the claim is proved.
We then obtain $M_{\mathcal T(\bar \rho)}(p_i)\leq M_{\mathcal T(\rho)}(p_i)\leq g_i$ for $i\geq 3$, that is, $\bar a\in W$,
a contradiction.
\end{proof}

\begin{remark}\label{rmk:existenceofrho0forb1close}\rm
The existence of the function $\rho_0$ in Theorem \ref{thm:abstractcasediscritecase} follows if one assumes  
that $b_1^0>\alpha_{p_1}$ is given and it is sufficiently close to $\alpha_{p_1}$. 
In fact, in this case we pick
$\alpha_{p_i}<\tau_i<\beta_{p_i}$ for $2\leq i \leq N$.
Since $\mathcal F\subset C^+(X)$, we have $h_{\tau_i,p_i}(x)\geq C_i>0$ for $i\geq 2$ and $x\in X$.
Then from (A3) (c), we can choose $b_1^0$ sufficiently close to $\alpha_{p_1}$ such that $h_{\tau_i,p_i}(x)\geq \min_{2\leq i\leq N}C_i\geq h_{b_1^0,p_1}(x)$.
If $b_i^0:=\tau_i$ for $2\leq i\leq N$, we then select $\rho_0(x)=\min_{1\leq i \leq N}h_{b_i^0, p_i}(x)=h_{b_1^0,p_1}(x)$ 
and so from Remark \ref{rmk:Mofhequalsdelta} $M_{\mathcal T(\rho_0)}=\omega(X)\,\delta_{p_1}$. 
Consequently, $M_{\mathcal T(\rho_0)}(p_i)=0$ for $2\leq i \leq N$.
%
\end{remark}

%
%

\begin{theorem}\label{lm:partialcomparison}
Let $\rho,\rho^*$ be two solutions as in 
Theorem \ref{thm:abstractcasediscritecase},
with $b=(b_{1},\cdots ,b_{N})$, and $b^{*}=(b_{1}^{*},\cdots , b_{N}^{*})$.  Assume that $X$ is connected and $\omega(E)>0$ for each open set $E\subset X$.
Assume in addition that condition (A3)(b) is replaced by $h_{t,y_0}<h_{s,y_0}$ for $t<s$.
\begin{enumerate}
\item[(a)]
If $b_{1}^{*}\leq b_{1}$, then $b_{i}^{*}\leq b_{i}$ for all $1\leq i \leq N$.
In particular, if $b_{1}^{*}= b_{1}$, then $b_{i}^{*}=b_{i}$ for all $1\leq i \leq N$.
\item[(b)]
If $\rho(x_0)=\rho^*(x_0)$ at some $x_0\in X$, then
$\rho=\rho^*$.
\end{enumerate}
\end{theorem}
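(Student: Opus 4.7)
\textbf{Part (a).} I would form $\sigma=\rho\wedge\rho^*=\min_i h_{\tilde b_i,p_i}$ with $\tilde b_i=\min(b_i,b_i^*)$, so $\tilde b_1=b_1^*$ by hypothesis. The key step is to show that $\sigma$ is again a solution. Set $J_1=\{i:b_i<b_i^*\}$ and $J_2=\{i:b_i>b_i^*\}$; these are disjoint, with $1\notin J_1$. Passing from $\rho^*$ to $\sigma$ by lowering the coordinates in $J_1$ one at a time, the monotonicity-in-parameters argument from Step~3 of the proof of Theorem~\ref{thm:abstractcasediscritecase} gives $M_{\mathcal T(\sigma)}(p_k)\leq g_k$ for each $k\notin J_1$; symmetrically, passing from $\rho$ to $\sigma$ via $J_2$ gives $M_{\mathcal T(\sigma)}(p_k)\leq g_k$ for $k\notin J_2$. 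Since $J_1\cap J_2=\emptyset$ the inequality holds for every $k$, and the total-mass identity $\omega(X)=\sum g_i$ forces equality. On the strict $p_1$-cell $A_1^\sigma=\{x:h_{b_1^*,p_1}(x)<h_{\tilde b_i,p_i}(x),\ i\neq 1\}$, the chain $\sigma(x)=h_{b_1^*,p_1}(x)\leq h_{\tilde b_i,p_i}(x)\leq h_{b_i^*,p_i}(x)$ yields $\rho^*(x)=\sigma(x)$; this cell is nonempty since $M_{\mathcal T(\sigma)}(p_1)=g_1>0$, boundaries of cells lie in an $\omega$-null set where $\mathcal T(\sigma)$ is multi-valued, and $\omega$ is positive on nonempty open sets. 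Applying (b) to $\sigma$ and $\rho^*$ yields $\sigma\equiv\rho^*$, and a second use of strict monotonicity (at each nonempty strict cell of $\sigma$) forces $\tilde b=b^*$, i.e., $b_i^*\leq b_i$ for all $i$. The ``in particular'' clause follows by applying the monotonicity in both directions.

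\textbf{Part (b).} Let $E=\{x:\rho(x)=\rho^*(x)\}$, closed by continuity and nonempty by hypothesis; the goal is $E=X$ via the connectedness of $X$. Replacing $\rho$ by $\sigma=\rho\wedge\rho^*$ (a solution by the argument above, agreeing with $\rho$ and $\rho^*$ at $x_0$) reduces matters to the case $\rho\leq\rho^*$. The cell analysis from (a) applied to the strict cells $A_i^o$ of $\rho$, combined with strict monotonicity in $t$, forces $b_i\leq b_i^*$ for all $i$. If $b=b^*$ we are done; otherwise let $J=\{j:b_j<b_j^*\}$ and $L=\{l:b_l=b_l^*\}$. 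On each $A_j^o$, $j\in J$, strict monotonicity gives $\rho<\rho^*$ strictly, so $E^c$ is a nonempty open set; conversely, any argmin of $\rho^*$ at a point of $E$ must lie in $L$ and is also an argmin of $\rho$ there, whence $E\subset\bigcup_{l\in L}A_l$ and $L\neq\emptyset$.

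The central task, and the main obstacle, is to rule out $J\neq\emptyset$. Near any $x_0\in E$ one has the local representations $\rho|_U=\min_{i\in I(x_0)}h_{b_i,p_i}$ and $\rho^*|_U=\min_{l\in I^*(x_0)}h_{b_l,p_l}$, with $I^*(x_0)\subset I(x_0)\cap L$ and $h_{b_l,p_l}=h_{b_l^*,p_l}$ for $l\in L$. The remaining step is to show that the ``extra'' argmins of $\rho$ in $I(x_0)\cap J$ cannot produce points of $E^c$ arbitrarily close to $x_0$; combining the strict monotonicity of $h_{t,y_0}$ in $t$ with the connectedness of $X$ and the positivity of $\omega$ on nonempty open sets, $E$ is both open and closed, hence $E=X$. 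Consequently $J=\emptyset$ and $\rho=\rho^*$.
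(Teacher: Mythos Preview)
Your approach is genuinely different from the paper's, and the idea of showing that $\sigma=\rho\wedge\rho^*$ is again a solution (via the Step~3 monotonicity argument applied from both sides) is correct and elegant. The logical structure you intend is: prove (b) independently, then use (b) to deduce (a). This is the reverse of the paper, which proves (a) first by a direct measure--topology argument and then derives (b) from (a) in three lines.

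The problem is that your proof of (b) has a real gap precisely where you say ``the main obstacle'' lies. After reducing to $\rho\le\rho^*$ and $b_i\le b_i^*$ for all $i$, you need $E=\{\rho=\rho^*\}$ to be open. At $x_0\in E$, you correctly observe that every argmin of $\rho^*$ lies in $L$, so $\rho^*=g:=\min_{l\in L}h_{b_l,p_l}$ on a neighborhood $U$ of $x_0$. But nothing prevents some $j\in J$ from being an argmin of $\rho$ at $x_0$: one can have $h_{b_j,p_j}(x_0)=g(x_0)$ with $b_j<b_j^*$. In that case, for $x$ near $x_0$ the function $h_{b_j,p_j}$ may dip strictly below $g$, giving $\rho(x)<g(x)=\rho^*(x)$, so $x\in E^c$. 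Your sentence ``combining the strict monotonicity of $h_{t,y_0}$ in $t$ with the connectedness of $X$ and the positivity of $\omega$'' does not supply an argument that rules this out; it is an assertion, not a proof. Since your (a) invokes (b), the whole argument is incomplete.

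The paper sidesteps this by proving (a) first with no reference to (b): assuming $J=\{j:b_j<b_j^*\}\ne\emptyset$, it shows that the closure of the interior of $\mathcal T(\rho^*)^{-1}(\mathcal P_J)$ is contained in the interior of $\mathcal T(\rho)^{-1}(\mathcal P_J)$, and both sets have $\omega$-measure $\sum_{j\in J}g_j$; since open minus closed is open and $\omega$ is positive on nonempty open sets, the two sets coincide, giving a nontrivial clopen set in the connected space $X$, a contradiction. Then (b) is immediate: if $b_1>b_1^*$, part (a) forces $b_j>b_j^*$ for every $j$ (otherwise some $b_j=b_j^*$ and (a) applied with that index as ``first'' would give $b=b^*$), whence $\rho^*(x_0)<\rho(x_0)$, a contradiction. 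If you want to salvage your route, you will need to import essentially this clopen argument into your (b)---at which point proving (a) directly as the paper does is simpler.
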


\begin{proof}
(a)
Let $J=\{j: b_{j}<b_{j}^{*}\}$ and $I=\{i: b_{i}^*\leq b_i\}$.
Suppose by contradiction that $J\neq \emptyset$. We have $I\neq \emptyset$ since $1\in I$.
For each $j\in J$ we have $h_{b_j,p_j}(x)< h_{b_j^*,p_j}(x)$ for all $x\in X$, since $b_{j}<b_{j}^{*}$. 
And also $h_{b_i^*,p_i}(x)\leq h_{b_i,p_i}(x)$ for all $i\in I$ and all $x\in X$.

Let $Q=\{x\in X:\text{$\mathcal T(\rho^*)(x)$ is not a singleton}\}$.
From (A2) and (A3)(a), we notice that if $x\in \mathcal T(\rho^*)^{-1}(p_i)\setminus Q$, for some $1\leq i\leq N$, then $\rho^*(x)=h_{b_i^*, p_i}(x)$.

We next prove that $\text{bdy }\mathcal T(\rho^*)^{-1}(\mathcal P_J)\subset Q$, where $\mathcal P_J=\{p_j: j\in J\}$.  Indeed, let $z_0\in \text{bdy } \mathcal T(\rho^*)^{-1}(\mathcal P_J)$ and $N_{z_0}$
be an open neighborhood of $z_0$.  Then $N_{z_0}\cap (\mathcal T(\rho^*)^{-1}(\mathcal P_J))^c$ is a nonempty open set,
since $(\mathcal T(\rho^*)^{-1}(\mathcal P_J))$ is compact.  Thus, $N_{z_0}\cap (\mathcal T(\rho^*)^{-1}(\mathcal P_J))^c\setminus Q$ has a positive measure and therefore is non empty.  We then obtain $\{z_k\}$ such that
$z_k\longrightarrow z_0$ and $z_k\in (\mathcal T(\rho^*)^{-1}(\mathcal P_J))^c\setminus Q$.
So there exists $\{p_{i_k}\}$ with $p_{i_k}=\mathcal T(\rho^*)(z_k)$ and $i_k\in I$.
We may assume that $p_{i_k}=p_i$, for some $i\in I$.
Therefore, $\rho^*(z_k)=h_{b^*_i, p_i}(z_k)$.   By taking limit, $\rho^*(z_0)=h_{b^*_i, p_i}(z_0)$.
From (A2) and (A3)(a), this yields $p_i\in \mathcal T(\rho^*)(z_0)$.  Since $\mathcal T(\rho^*)(z_0)\cap \mathcal P_J\neq\emptyset$, we obtain $z_0\in Q$.

As a consequence, $\omega( (\mathcal T(\rho^*)^{-1}(\mathcal P_J))^\circ)=\sum_{j\in J}g_j>0$.

Given $x_0\in \overline{(\mathcal T(\rho^*)^{-1}(\mathcal P_J))^\circ }$, for any open neighborhood
$N_{x_0}$ of $x_0$, we have that $N_{x_0}\cap (\mathcal T(\rho^*)^{-1}(\mathcal P_J))^\circ$ is a nonempty open set.  Therefore, as in the previous argument, there exists $x_k\in (\mathcal T(\rho^*)^{-1}(\mathcal P_J))^\circ\setminus Q$ for $k\geq 1$ such that $x_k\longrightarrow x_0$.
Hence, one may assume that there exists some $p_j$ with $j\in J$ such that 
$p_j=\mathcal T(\rho^*)(x_k)$.   So $h_{b_j^*, p_j}(x_k)=\rho^*(x_k)$ and then
$h_{b_j^*, p_j}(x_0)=\rho^*(x_0)$ by taking limit.
Therefore,
$h_{b_j^*,p_j}(x_0)\leq h_{b_i^*,p_i}(x_0)$ for all $1\leq i \leq N$.
Thus, we obtain for $j\in J$ that
\[
h_{b_j,p_j}(x_0)<h_{b_j^*,p_j}(x_0)
\leq h_{b_i^*,p_i}(x_0) \leq h_{b_i,p_i}(x_0)
\quad \text{for all $i\in I$}.
\]
Hence by continuity, there exists $N_{x_0}$ a neighborhood of $x_0$ such that
\[
h_{b_j,p_j}(y)< h_{b_i,p_i}(y)
\quad \text{for all $i\in I$, $j\in J$, and $y\in N_{x_0}$}.
\]
By definition of $\rho$ this implies that $\rho(y)=\min_{j\in J}h_{b_j,p_j}(y)$ for all $y\in N_{x_0}$.
Therefore for each $y\in N_{x_0}$ there exists $j_0\in J$, depending on $y$, such that $\rho(y)=h_{b_{j_0},p_{j_0}}(y)$.
Hence, once again by (A2) and (A3)(a), $p_{j_0}\in \mathcal T(h_{b_{j_0},p_{j_0}})(y)\subset \mathcal T(\rho)(y)$.
That is, $y\in \mathcal T(\rho)^{-1}(p_{j_0})$, and 
therefore
\[
N_{x_0}\subset \mathcal T(\rho)^{-1}\left(\mathcal P_J\right).
\]

We then have that every point $x\in 
\overline{(\mathcal T(\rho^*)^{-1}(\mathcal P_J))^\circ}$ has a neighborhood
contained in $\mathcal T(\rho)^{-1}\left(\mathcal P_J\right)$, that is,
\[
\overline{(\mathcal T(\rho^*)^{-1}(\mathcal P_J))^\circ}
\subset 
\left(\mathcal T(\rho)^{-1}(\mathcal P_J)\right)^{\circ}
\neq X.
\]
This is a contradiction with the fact that
\[
\omega\left(\mathcal T(\rho)^{-1}\left(\mathcal P_J\right)\right)
=\sum_{j\in J}g_{j}
=
\omega\left(\overline{(\mathcal T(\rho^*)^{-1}\left(\mathcal P_J\right))^\circ} \right).
\]

(b) 
If $b_1=b_1^*$, then $b_j=b_j^*$ for all $j>1$ by part (a), and we are done.
We claim that if $b_1>b_1^*$, then $b_j> b_j^*$ for all $j>1$.
Indeed, if $b_j=b_j^*$ for some $j\neq 1$,
then $b_k=b_k^*$ for all $k\neq j$ by part (a), a contradiction.
Therefore
$\rho^*(x_0)=\min h_{p_i,b_i^*}(x_0)< \min h_{p_i,b_i}(x_0)=\rho(x_0)$, a contradiction.

\end{proof}

\begin{theorem}\label{thm:uniquenessabstractcase}
Let $\{\mu_l\}$ be a sequence of discrete Radon measures in $Y$ such that
$\mu_l \longrightarrow \mu$ weakly and $\mu_l(Y)=\omega(X)$ for $l\geq 1$.
Let $\rho_l$ be a solution obtained in Theorem \ref{thm:abstractcasediscritecase} corresponding to $\mu_l$.
Assume that there exists $R_0>0$ such that $R_0\in \text{Range }(\rho_l)$ for $l\geq 1$.
Suppose that
\begin{enumerate}
\item[(i)]
For each $R_1>0$ with $R_1\in Range(h_{t,y})$, there exists $C_{R_1}>0$ such that
$C_{R_1}^{-1} \leq h_{t,y} \leq C_{R_1}$.
\item[(ii)]
For any $C_1>C_0>0$, the family $\{f\in\mathcal F: C_0\leq f \leq C_1 \text{ in $X$}\}$
is compact in $C(X)$.
\end{enumerate}
Then there exists $\rho\in \mathcal F$ satisfying $\mathcal M_{\mathcal T(\rho)}=\mu$.
\end{theorem}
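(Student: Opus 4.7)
The strategy is a compactness-plus-continuity argument: first I would show that the $\rho_l$'s are uniformly bounded from above and below, then use hypothesis (ii) to extract a uniformly convergent subsequence $\rho_{l_k}\to\rho$ with $\rho\in\mathcal F$, and finally pass to the limit in the measure equation via Lemma \ref{lm:continuityofmeasuresabstractapproach}. Each $\rho_l$ is a finite minimum of functions from $\mathcal F$, so by iterating (A1) we have $\rho_l\in\mathcal F$, and $M_{\mathcal T(\rho_l)}=\mu_l$ by Theorem \ref{thm:abstractcasediscritecase}.

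\textbf{Uniform bounds.} Pick $x_l\in X$ with $\rho_l(x_l)=R_0$. Writing $\rho_l=\min_{i}h_{b_i^l,p_i^l}$, the minimum at $x_l$ is realized by some index $j=j(l)$, so $h_{b_j^l,p_j^l}(x_l)=R_0\in\mathrm{Range}(h_{b_j^l,p_j^l})$. Hypothesis (i) then gives
\[
C_{R_0}^{-1}\le h_{b_j^l,p_j^l}(x)\le C_{R_0}\qquad\text{for all }x\in X,
\]
so $\rho_l\le C_{R_0}$ uniformly in $l$. For the lower bound, assume $\rho_l(y)=h_{b_i^l,p_i^l}(y)$ for some $i$ and some $y\in X$; then $h_{b_i^l,p_i^l}(x_l)\ge\rho_l(x_l)=R_0$. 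By continuity of $h_{b_i^l,p_i^l}$ together with connectedness of $X$ (which is automatic in the application $X=\overline{\Omega}\subset S^{n-1}$), the image $h_{b_i^l,p_i^l}(X)$ is a connected subset of $(0,\infty)$, hence an interval containing $R_0$. Invoking (i) again, $h_{b_i^l,p_i^l}\ge C_{R_0}^{-1}$ on $X$, so $\rho_l(y)\ge C_{R_0}^{-1}$.

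\textbf{Passage to the limit.} With the uniform bounds $C_{R_0}^{-1}\le \rho_l\le C_{R_0}$, hypothesis (ii) makes $\{\rho_l\}$ precompact in $C(X)$, so along a subsequence $\rho_{l_k}\to\rho$ uniformly with $\rho\in\mathcal F$. Since $\mathcal F$ is $\mathcal T$-concave, $\mathcal T$ is continuous at $\rho$, and Lemma \ref{lm:continuityofmeasuresabstractapproach} yields $M_{\mathcal T(\rho_{l_k})}\to M_{\mathcal T(\rho)}$ weakly. Since $M_{\mathcal T(\rho_{l_k})}=\mu_{l_k}\to\mu$ weakly by hypothesis, uniqueness of weak limits of Radon measures on the compact metric space $Y$ forces $M_{\mathcal T(\rho)}=\mu$, as desired. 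The main obstacle is the uniform lower bound: the index realizing the minimum at a general point $y$ may differ from the one realizing the value $R_0$ at $x_l$, forcing an intermediate-value argument on each individual $h_{b_i^l,p_i^l}$ that quietly relies on the connectedness of $X$.
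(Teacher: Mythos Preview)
Your overall strategy—uniform two-sided bounds, then compactness via (ii), then weak convergence of the refractor measures via Lemma \ref{lm:continuityofmeasuresabstractapproach}—is exactly the paper's, and your upper bound is identical to theirs. The difference is in the lower bound, and here your argument has a genuine gap relative to the theorem as stated.

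For the lower bound you fix $y\in X$, pick the $h_{b_i^l,p_i^l}$ realizing the minimum at $y$, observe that this function is at least $R_0$ at $x_l$, and then invoke the intermediate value theorem on $X$ to put $R_0$ into its range. That step needs $X$ to be connected, and connectedness is \emph{not} among the hypotheses of Theorem \ref{thm:uniquenessabstractcase}. You flag this honestly (``which is automatic in the application''), but the abstract theorem is supposed to hold without it, and the paper's proof does.

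The paper avoids connectedness by sliding in the parameter $t$ rather than in the spatial variable. With your notation: at $y$ one has $\rho_l(y)=h_{b_i^l,p_i^l}(y)$ and $h_{b_i^l,p_i^l}(x_l)\ge R_0$. Now keep the focus $p_i^l$ fixed and decrease $t$ from $b_i^l$: by (A3)(b) the family $h_{t,p_i^l}$ is monotone in $t$, by (A3)(c) it tends to $0$ uniformly as $t\to\alpha_{p_i^l}$, and by (A3)(d) it varies continuously. Hence there exists $b''\le b_i^l$ with $h_{b'',p_i^l}(x_l)=R_0$, so $R_0\in\mathrm{Range}(h_{b'',p_i^l})$ and (i) gives $h_{b'',p_i^l}\ge C_{R_0}^{-1}$. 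Monotonicity then yields
\[
\rho_l(y)=h_{b_i^l,p_i^l}(y)\ge h_{b'',p_i^l}(y)\ge C_{R_0}^{-1}.
\]
No topological assumption on $X$ is used. So your proof is correct for the intended applications where $X=\overline\Omega$ is connected, but to match the abstract statement you should replace the intermediate-value step on $X$ by this parameter-shifting argument using (A3)(b)--(d).
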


\begin{proof}
By (ii) and Lemma \ref{lm:continuityofmeasuresabstractapproach}, it suffices to show $\{\rho_l\}$ is bounded from below and above.
Assume $\rho_l(x_l)=R_0$ for some $x_l\in X$.   Then there exists $h_{b_l,y_l}$ such that
$\rho_l\leq h_{b_l,y_l}$ and $R_0=\rho_l(x_l)= h_{b_l,y_l}(x_l)$.  By (i), 
$C_{R_0}^{-1} \leq h_{b_l,y_l} \leq C_{R_0}$ for some $C_{R_0}$.  Therefore, $\rho_l\leq C_{R_0}$.
To get a lower bound, given $x_1\in X$, there exists $h_{b'_l,y'_l}$ such that
$\rho_l\leq h_{b'_l,y'_l}$ and $\rho_l(x_1)= h_{b'_l,y'_l}(x_1)$.   Hence, $R_0\leq 
h_{b'_l,y'_l}(x_l)$.   Since $h_{t, y'_l}$ is continuous and decreasing to zero ((A3)(b) and (c)),
there exists $b"_l \leq b'_l$ with $R_0=h_{b"_l,y'_l}(x_l)$.
It follows from (A3)(b) that $\rho_l(x_1)\geq  h_{b"_l,y'_l}(x_1) \geq C_{R_0}^{-1}$.
Hence $\rho_l \geq C_{R_0}^{-1}$.
\end{proof}

\subsection{Convex case}\label{subsect:convexcaseBIS}
We assume here that $\mathcal F\subset C^+(X)$ and condition (A1) above is replaced by
\begin{enumerate}
\item[(A1')] if $f_1,f_2\in \mathcal F$, then $f_1\vee f_2=\max\{f_1,f_2\}\in \mathcal F$.
\end{enumerate}
{\it We say that the class $\mathcal F\subset C^+(X)$ is $\mathcal T$-convex
if $\mathcal F$ satisfies (A1') and there 
exists a map $\mathcal T:\mathcal F\to C_s(X,Y)$  that
is continuous at each $\phi\in \mathcal F$ and the following condition holds}
\begin{enumerate}
\item[(A2')] if $\phi_1(x_0)\geq \phi_2(x_0)$, then $\mathcal T(\phi_1)(x_0)\subset \mathcal T(\phi_1\vee \phi_2)(x_0)$.
\end{enumerate}
Here we substitute condition (A3) by
\begin{enumerate}
\item[(A3')] For each $y_0\in Y$ there exists an interval $(\alpha_{y_0},\beta_{y_0})$
and a family of functions
$\left\{h_{t,y_0}(x)\right\}_{\alpha_{y_0}<t<\beta_{y_0}}\subset \mathcal F$ satisfying 
\begin{enumerate}
\item[(a)] $y_0\in \mathcal T(h_{t,y_0})(x)$ for all $x\in X$,
\item[(b)] $h_{t,y_0}\geq h_{s,y_0}$ for $t\leq s$,
\item[(c)] $h_{t,y_0}\to 0$ uniformly as $t\to \beta_{y_0}$,
\item[(d)] $h_{t,y_0}$ is continuous in $C(X)$ with respect to $t$, i.e., 
$\max_{x\in X}|h_{t',y_0}(x)-h_{t,y_0}(x)|\to 0$ as $t'\to t$, for $\alpha_{y_0}<t<\beta_{y_0}$.
\end{enumerate}
\end{enumerate}
Under these assumptions we prove the following theorem.

\begin{theorem}\label{thm:abstractcasediscritecaseconvexcaseBIS}
Let $X,Y$ be compact metric spaces and $\omega$ is a Radon measure in $X$.
Let $p_1,\cdots ,p_N$ be distinct points in $Y$, and $g_1,\cdots ,g_N$ be positive numbers with $N\geq 2$.

Let $\mathcal F\subset C^+(X)$ 
and  $\mathcal T:\mathcal F\to C_s(X,Y)$ be such that 
$\mathcal F$ is $\mathcal T$-convex and (A3') holds.

Assume that 
\begin{equation}\label{eq:conservationofenergyomegabisBIS}
\omega (X)=\sum_{i=1}^N g_i.
\end{equation}
Suppose that there exists $(b^0_1, \cdots, b^0_N)$ with $h_{b_1^0,p_1}(x)\le \min_{2\le i\le N}h_{b_i^0,p_i}(x)$ on $X$.
Then there exist $b_i\in (\alpha_{p_i},\beta_{p_i})$, $2\leq i\leq N$, 
such that the function, with $b_1=b_1^0$,
\[
\rho(x)=\max_{1\leq i \leq N}h_{b_i,p_i}(x)
\]
satisfies
\[
M_{\mathcal T(\rho)}=\sum_{i=1}^N g_i\,\delta_{p_i}.
\]
\end{theorem}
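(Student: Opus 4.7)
The strategy is to mirror the proof of Theorem \ref{thm:abstractcasediscritecase}, replacing $\min$ by $\max$ and flipping the relevant inequalities. Fix $b_1 = b_1^0$, and for $b = (b_2, \cdots, b_N)$ set $\rho_b(x) = \max_{1 \le i \le N} h_{b_i, p_i}(x)$, which lies in $\mathcal F$ by (A1'). The plan is to minimize the linear functional $z(b) = b_2 + \cdots + b_N$ over the admissible set
\[
W = \{b : \alpha_{p_i} < b_i < \beta_{p_i},\ M_{\mathcal T(\rho_b)}(p_i) \le g_i \text{ for } 2 \le i \le N\}.
\]

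To see $W$ is non-empty, use (A3')(c) to pick $b_i^*$ close enough to $\beta_{p_i}$ that $h_{b_i^*, p_i} < h_{b_1^0, p_1}$ on $X$ for every $i \ge 2$; then $\rho_{b^*} \equiv h_{b_1^0, p_1}$ and Remark \ref{rmk:Mofhequalsdelta} gives $M_{\mathcal T(\rho_{b^*})} = \omega(X)\delta_{p_1}$, so $M(p_i) = 0$ for $i \ge 2$. The support argument of Step 1 of Theorem \ref{thm:abstractcasediscritecase} adapts verbatim: at single-valued points $z_0$, $\rho_b(z_0) = h_{b_k, p_k}(z_0)$ for some $k$, and (A2') together with (A3')(a) forces $p_k \in \mathcal T(\rho_b)(z_0)$, so $M_{\mathcal T(\rho_b)}$ concentrates on $\{p_1, \cdots, p_N\}$. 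Consequently $M(p_1) \ge g_1$ on $W$, and for any $x_0 \in \mathcal T(\rho_b)^{-1}(p_1) \setminus N_0$ the same reasoning yields $\rho_b(x_0) = h_{b_1^0, p_1}(x_0)$, whence
\[
h_{b_i, p_i}(x_0) \le h_{b_1^0, p_1}(x_0) \le h_{b_i^0, p_i}(x_0)
\]
using the hypothesis; combined with (A3')(b), this traps $b_i$ away from $\alpha_{p_i}$, and together with (A3')(d) and Lemma \ref{lm:continuityofmeasuresabstractapproach} yields compactness of $W$.

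The functional $z$ is continuous on the compact set $W$ and attains a minimum at some $a = (a_2, \cdots, a_N)$. The task is to show $M_{\mathcal T(\rho_a)}(p_i) = g_i$ for every $i \ge 2$, after which mass conservation gives $M(p_1) = g_1$. If instead $M(p_2) < g_2$, consider $\bar a = (a_2 - \epsilon, a_3, \cdots, a_N)$ with $\bar \rho = \max_{1 \le i \le N} h_{\bar a_i, p_i}$ (with $\bar a_1 = b_1^0$); by (A3')(b), $\bar \rho \ge \rho_a$. For $i \ne 2$ the key claim is $\mathcal T(\bar \rho)^{-1}(p_i) \subset \mathcal T(\rho_a)^{-1}(p_i)$ off a null set: if $\mathcal T(\bar \rho)(x_0) = \{p_i\}$, the index realizing the maximum in $\bar \rho(x_0)$ must be $i$ (else (A2') and (A3')(a) insert a second $p_j$), so $\bar \rho(x_0) = h_{a_i, p_i}(x_0)$; since $\bar \rho \ge \rho_a \ge h_{a_i, p_i}$, we get equality $\rho_a(x_0) = h_{a_i, p_i}(x_0)$, and a further application of (A2') and (A3')(a) gives $p_i \in \mathcal T(\rho_a)(x_0)$. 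Continuity handles $p_2$ for small $\epsilon$, so $\bar a \in W$ with strictly smaller $z$, contradicting minimality.

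The hard part will be the compactness step. In the concave case the lower bound on $b_i$ follows directly from (A3)(c) combined with $h_{b_i, p_i}(x_0) \ge h_{b_1, p_1}(x_0) \ge C > 0$. Here the reversed bound $h_{b_i, p_i}(x_0) \le h_{b_1^0, p_1}(x_0)$ is on its own compatible with $b_i \to \alpha_{p_i}$, so the new hypothesis $h_{b_1^0, p_1} \le \min_{2 \le i \le N} h_{b_i^0, p_i}$ must be used essentially, through monotonicity (A3')(b), to pin $b_i$ above a value determined by $b_i^0$; this is where the carefully chosen initial configuration $(b_1^0, \cdots, b_N^0)$ plays its real role.
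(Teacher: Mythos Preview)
Your approach is essentially the paper's, and the overall strategy is correct. Two technical points need attention.

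First, your set $W$ as defined (with only the open bound $b_i < \beta_{p_i}$) is not compact: nothing prevents a sequence in $W$ with $b_i^{(m)} \to \beta_{p_i}$, since for such $b_i$ the function $h_{b_i, p_i}$ is uniformly small by (A3')(c) and contributes nothing to $\rho_b$, so the constraint $M(p_i) \le g_i$ is trivially met. The paper avoids this by building the upper bound into the definition of the admissible set, taking $b_i \le \beta_{p_i} - \tau$ with $\tau$ chosen exactly so that the starting configuration sits at this ceiling; you should do likewise with your $b_i^*$.

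Second, your lower-bound argument is slightly too quick. From the pointwise inequality $h_{b_i, p_i}(x_0) \le h_{b_i^0, p_i}(x_0)$ together with the \emph{non-strict} monotonicity in (A3')(b) you cannot directly infer $b_i \ge b_i^0$: if $t \mapsto h_{t, p_i}(x_0)$ has a plateau, both inequalities are compatible with $b_i < b_i^0$. The paper instead argues globally: if $b_i < b_i^0$ then $h_{b_i, p_i} \ge h_{b_i^0, p_i} \ge h_{b_1, p_1}$ on all of $X$, so at every point the maximum defining $\rho_b$ is realized by some index $j \ne 1$; by (A2') and (A3')(a) this puts $p_j \in \mathcal T(\rho_b)(x)$ everywhere, forcing $M_{\mathcal T(\rho_b)}(p_1) = 0$, which contradicts $M(p_1) \ge g_1 > 0$. (Your pointwise route can also be rescued: in the plateau case the chain collapses to $h_{b_i, p_i}(x_0) = \rho_b(x_0)$, so (A2') and (A3')(a) give $p_i \in \mathcal T(\rho_b)(x_0)$, contradicting single-valuedness at $x_0$.)
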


\begin{proof}
Let $b_1=b_1^0$ and $\eta=\min_Xh_{b_1,p_1}>0$. From (A3')(c), there exists $\tau>0$ such that  
\[
\max_{X} h_{\beta_{p_i}-\tau,p_i}\leq \eta
\]
for all $2\leq i \leq N$.
Therefore, if $\rho_\tau=h_{b_1,p_1}\vee \left( \max_{2\leq i\leq N}h_{\beta_{p_i}-\tau,p_i}\right)=h_{b_1,p_1}$, 
then $M_{\mathcal T(\rho_\tau)}=\omega(X)\,\delta_{p_1}$, and so $M_{\mathcal T(\rho_\tau)}(p_i)=0$ for $2\leq i\leq N$.

Consider the set
\[
W(b_1)=
\left\{(b_2,\cdots ,b_N):\alpha_{p_i}< b_i\leq \beta_{p_i}-\tau ; M_{\mathcal T(\rho)}(p_i)\leq g_i,i=2,\cdots ,N \right\}.
\]
$W(b_1)\neq \emptyset$, because $(\beta_{p_2}-\tau,\cdots ,\beta_{p_N}-\tau)\in W(b_1)$.

We claim that $b_i\geq b_i^0$, for $2\leq i \le N$, for all $(b_2,\cdots ,b_N)\in W(b_1)$. 

To prove the claim, 
we first show $M_{\mathcal T(\rho)}\left(Y\setminus \{p_1,\cdots ,p_N\}\right)=0$; $\rho=\max_{1\leq i \leq N}h_{b_i,p_i}$. Indeed, for 
$z_0\in E=\mathcal T(\rho)^{-1}\left(Y\setminus \{p_1,\cdots ,p_N\}\right)$,  
there is $p\in \mathcal T(\rho)(z_0)$ with $p\neq p_i$ for $1\leq i\leq N$.
On the other hand, $\rho(z_0)=h_{b_k,p_k}(z_0)$ for some $k$.
From (A2') and (A3')(a) we then have $p_k\in \mathcal T(h_{b_k,p_k})(z_0)\subset \mathcal T(\rho)(z_0)$. So $\mathcal T(\rho)$ is not single-valued at $z_0$
and so $\omega(E)=0$.
Consequently, from \eqref{eq:conservationofenergyomegabisBIS} we get $M_{\mathcal T(\rho)}(p_1)\geq g_1>0$.

Now suppose by contradiction that $b_i<b_i^0$ for some $2\leq i \leq N$.  Since $b_1=b_1^0$, it follows from the assumption and (A3')(b) that $h_{b_1, p_1} \leq h_{b_i^0, p_i} \leq h_{b_i, p_i}$ in $X$.
This implies that for each $x_0\in X$, there is $j\neq 1$ such that $\rho(x_0)=h_{b_j, p_j}(x_0)$. By (A2'), 
$p_j\in \mathcal T(h_{b_j,p_j})(x_0)\subset \mathcal T(\rho)(x_0)$.   Thus, $X=\mathcal T(\rho)^{-1}(\{p_2,\cdots ,p_N\})$ and
$M_{\mathcal T(\rho)}(p_1)=\omega(\mathcal T(\rho)^{-1}(p_1))=0$, a contradiction and the claim is proved.

As in the proof of Theorem 2.5, one can show
that $M_{\mathcal T(\rho)}(p_i)$ is continuous in $b'= (b_2,\cdots ,b_N)$ for each $1\leq i\leq N$.
Therefore, $W(b_1)$ is compact.

Finally, to get the existence of solutions,  
the function $z=b_{2}+\cdots +b_{N}$ attain its minimum on $W(b_1)$ at some point $(a_{2},\cdots ,a_{N})$.
We claim that $\rho(x)=\left(\max_{2\leq i \leq N} h_{a_i,p_i}\right)\vee h_{b_1,p_1}$ is the desired solution.
Otherwise, we may assume, for example, that $M_{\mathcal T(\rho)}(p_2)<g_2$.
Let $\bar a=(a_{2}-\epsilon,a_3,\cdots ,a_{N})$ and $\bar \rho(x)=\left(\max_{2\leq i \leq N} h_{\bar a_i,p_i}\right)\vee h_{b_1,p_1}$. 
By continuity $M_{\mathcal T(\bar \rho)}(p_2)<g_2$ for all $\epsilon$ sufficiently small.

On the other hand, for $i\geq 3$, we claim that $\mathcal T(\bar \rho)^{-1}(p_i)\subset \mathcal T(\rho)^{-1}(p_i)$ except on a set of $\omega$-measure zero. 
Indeed, if $x_0\in \mathcal T(\bar \rho)^{-1}(p_i)$ and $\mathcal T(\bar \rho)(x_0)$ is a single point, then $p_i=\mathcal T(\bar \rho)(x_0)$. 
Notice that $\bar \rho(x_0)=h_{a_i,p_i}(x_0)$. Otherwise, if $\bar \rho(x_0)=h_{\bar a_j,p_j}(x_0)>h_{a_i,p_i}(x_0)$ for some $j\neq i$ (we set $\bar a_1=b_1$), 
then by (A2') and (A3')(a) $p_j\in \mathcal T(h_{\bar a_j,p_j})(x_0)\subset \mathcal T(\bar\rho)(x_0)$ and so $p_j=p_i$, a contradiction. Set $\bar a_2=a_2-\epsilon$.
From (A3') (b), $h_{\bar a_2,p_2}\geq h_{a_2,p_2}$ and hence $h_{a_i,p_i}(x_0)=\rho(x_0)$.
Hence, and once again from (A2'), 
$\mathcal T(h_{a_i,p_i})(x_0)\subset \mathcal T(\rho\vee h_{a_i,p_i})(x_0)=
\mathcal T(\rho)(x_0)$. Thus, $p_i\in \mathcal T(\rho)(x_0)$ from (A3')(a); so $x_0\in \mathcal T(\rho)^{-1}(p_i)$, 
and the claim is proved.
We then obtain $M_{\mathcal T(\bar \rho)}(p_i)\leq M_{\mathcal T(\rho)}(p_i)\leq g_i$ for $i\geq 3$, that is, $\bar a\in W$,
a contradiction.
\end{proof}

Similar to Theorem \ref{lm:partialcomparison} we have the following.

\begin{theorem}\label{lm:partialcomparisonconvex}
Let $\rho,\rho^*$ be two solutions as in 
Theorem \ref{thm:abstractcasediscritecaseconvexcaseBIS},
with $b=(b_{1},\cdots ,b_{N})$, and $b^{*}=(b_{1}^{*},\cdots , b_{N}^{*})$.  Assume that $X$ is connected and $\omega(E)>0$ for each open set $E\subset X$.
Assume in addition that condition (A3')(b) is replaced by $h_{s,y_0}<h_{t,y_0}$ for $t<s$.
\begin{enumerate}
\item[(a)]
If $b_{1}^{*}\leq b_{1}$, then $b_{i}^{*}\leq b_{i}$ for all $1\leq i \leq N$.
In particular, if $b_{1}^{*}= b_{1}$, then $b_{i}^{*}=b_{i}$ for all $1\leq i \leq N$.
\item[(b)]
If $\rho(x_0)=\rho^*(x_0)$ at some $x_0\in X$, then
$\rho=\rho^*$.
\end{enumerate}
\end{theorem}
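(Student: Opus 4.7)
The theorem is the convex-case analogue of Theorem \ref{lm:partialcomparison}, and I plan to prove it by mirroring that proof while systematically replacing (A1)--(A3) by (A1')--(A3'), $\min$ by $\max$, and reversing every inequality that comes from monotonicity of $h_{t,y_0}$ in $t$. Under the strict form of (A3')(b), $h_{s,y_0}<h_{t,y_0}$ whenever $t<s$, so $b<b^*$ now implies $h_{b^*,p}<h_{b,p}$, the opposite of the concave setting.

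For part (a) I argue by contradiction, assuming $J=\{j:b_j<b_j^*\}\neq\emptyset$ and setting $I=\{i:b_i^*\leq b_i\}$, so that $I$ and $J$ partition $\{1,\dots,N\}$ and $1\in I$. Let $Q=\{x:\mathcal T(\rho^*)(x)\text{ is not a singleton}\}$, which is $\omega$-null. A first observation, via (A2') and (A3')(a), is that if $x\in\mathcal T(\rho^*)^{-1}(p_k)\setminus Q$ then $\rho^*(x)=h_{b_k^*,p_k}(x)$ (otherwise another $h_{b_\ell^*,p_\ell}$ would attain the max and force $p_\ell\in\mathcal T(\rho^*)(x)$, violating single-valuedness). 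I then show $\mathrm{bdy}\,\mathcal T(\rho^*)^{-1}(\mathcal P_J)\subset Q$ exactly as in Theorem \ref{lm:partialcomparison}: approach a boundary point $z_0$ by $z_k\in(\mathcal T(\rho^*)^{-1}(\mathcal P_J))^c\setminus Q$ (possible since open sets have positive $\omega$-measure), extract a constant label $p_i$ with $\mathcal T(\rho^*)(z_k)=p_i$, $i\in I$, and pass to the limit via $\rho^*(z_k)=h_{b_i^*,p_i}(z_k)\to h_{b_i^*,p_i}(z_0)=\rho^*(z_0)$, which by (A2')--(A3')(a) gives $p_i\in\mathcal T(\rho^*)(z_0)$; since $z_0$ lies in the closed set $\mathcal T(\rho^*)^{-1}(\mathcal P_J)$ and $I\cap J=\emptyset$, this forces $z_0\in Q$. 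Hence $\omega\bigl((\mathcal T(\rho^*)^{-1}(\mathcal P_J))^\circ\bigr)=\sum_{j\in J}g_j>0$.

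Next, for $x_0\in\overline{(\mathcal T(\rho^*)^{-1}(\mathcal P_J))^\circ}$ I would pick $x_k\to x_0$ inside this interior and outside $Q$, with a fixed label $j\in J$ so that $\rho^*(x_k)=h_{b_j^*,p_j}(x_k)$; the limit gives $\rho^*(x_0)=h_{b_j^*,p_j}(x_0)\geq h_{b_i^*,p_i}(x_0)$ for every $i$. Chaining the monotonicity inequalities yields
\[
h_{b_i,p_i}(x_0)\leq h_{b_i^*,p_i}(x_0)\leq h_{b_j^*,p_j}(x_0)<h_{b_j,p_j}(x_0)\quad\text{for every }i\in I,
\]
so by continuity $h_{b_j,p_j}>\max_{i\in I}h_{b_i,p_i}$ on a neighborhood $N_{x_0}$; there $\rho=\max_{j'\in J}h_{b_{j'},p_{j'}}$, and (A2') with (A3')(a) places each $y\in N_{x_0}$ in some $\mathcal T(\rho)^{-1}(p_{j_0})\subset\mathcal T(\rho)^{-1}(\mathcal P_J)$. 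Thus $\overline{(\mathcal T(\rho^*)^{-1}(\mathcal P_J))^\circ}\subset(\mathcal T(\rho)^{-1}(\mathcal P_J))^\circ$. Both sets carry mass $\sum_{j\in J}g_j$ (their boundaries being $\omega$-null), and since open sets have positive $\omega$-measure the inclusion must be an equality, making $(\mathcal T(\rho)^{-1}(\mathcal P_J))^\circ$ clopen. Connectedness of $X$ forces it to be $\emptyset$ or $X$, both impossible ($J\neq\emptyset$ gives positive mass; $1\in I$ keeps the mass strictly below $\omega(X)$).

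Part (b) follows formally from (a). If $b_1=b_1^*$, (a) gives $b_i=b_i^*$ for all $i$. If $b_1>b_1^*$, then (a), whose proof is symmetric in the role of the anchor index, gives $b_i\geq b_i^*$ for all $i$; equality at any single $i$ would, by a second application of (a), force equality everywhere, contradicting $b_1>b_1^*$. Hence $b_i>b_i^*$ strictly for all $i$, and the strict (A3')(b) then gives $h_{b_i,p_i}(x)<h_{b_i^*,p_i}(x)$ for all $x,i$; evaluating at the index realizing $\rho(x)$ yields $\rho(x)<\rho^*(x)$ for every $x$, contradicting $\rho(x_0)=\rho^*(x_0)$. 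The main obstacle throughout is the careful bookkeeping of reversed inequalities: switching $\min$ to $\max$, monotonicity of $h$ in $t$ from increasing to decreasing, and (A2) to (A2') all conspire to flip several signs, and one must verify that the strict inequality $h_{b_j,p_j}(x_0)>h_{b_i,p_i}(x_0)$ still emerges to drive the contradiction. Modulo that, the topological/measure argument using connectedness of $X$ and positivity of $\omega$ on open sets is identical to the concave case.
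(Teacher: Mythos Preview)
Your proposal is correct and takes exactly the approach the paper intends: the paper gives no separate proof for this theorem, only the remark ``Similar to Theorem~\ref{lm:partialcomparison} we have the following,'' and your argument is precisely the expected convex-case mirror of that proof, with the inequalities reversed via (A2$'$) and the strict decreasing form of (A3$'$)(b). The bookkeeping you flag---that $b_j<b_j^*$ now gives $h_{b_j^*,p_j}<h_{b_j,p_j}$, that $\rho^*=\max$ forces $h_{b_j^*,p_j}(x_0)\ge h_{b_i^*,p_i}(x_0)$, and that the chain $h_{b_i,p_i}\le h_{b_i^*,p_i}\le h_{b_j^*,p_j}<h_{b_j,p_j}$ yields $\max_J h_{b_\cdot,p_\cdot}>\max_I h_{b_\cdot,p_\cdot}$ near $x_0$---is handled correctly, and your clopen/connectedness conclusion is a slightly more explicit rendering of the measure contradiction the paper gives in the concave case.
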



\begin{theorem}\label{thm:general measure}
Let $\{\mu_l\}$ be a sequence of discrete Radon measures in $Y$ such that
$\mu_l \longrightarrow \mu$ weakly and $\mu_l(Y)=\omega(X)$ for $l\geq 1$.
Let $\rho_l$ be a solution obtained in Theorem \ref{thm:abstractcasediscritecaseconvexcaseBIS} corresponding to $\mu_l$.
Assume that there exists $R_0>0$ such that $R_0\in \text{Range }(\rho_l)$ for $l\geq 1$ and $R_0<\lim_{t\to \alpha_y^+} h_{t,y}(x)$ for $x\in X$, $y\in Y$.
Suppose that
\begin{enumerate}
\item[(i)]
For each $R_1>0$ with $R_1\in Range(h_{t,y})$, there exists $C_{R_1}>0$ such that
$C_{R_1}^{-1} \leq h_{t,y} \leq C_{R_1}$.
\item[(ii)]
For any $C_1>C_0>0$, the family $\{f\in\mathcal F: C_0\leq f \leq C_1 \text{ in $X$}\}$
is compact in $C(X)$.
\end{enumerate}
Then there exists $\rho\in \mathcal F$ satisfying $\mathcal M_{\mathcal T(\rho)}=\mu$.
\end{theorem}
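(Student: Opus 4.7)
The plan is to mirror the proof of Theorem \ref{thm:uniquenessabstractcase}, swapping the roles of upper and lower bounds to reflect the passage from \emph{min}-envelopes in the concave case to \emph{max}-envelopes here. Concretely, once uniform bounds $0<C^{-1}\le \rho_l\le C$ on $X$ are in place, hypothesis (ii) gives a subsequence $\rho_l\to\rho$ uniformly in $X$ with $\rho\in\mathcal F$; Lemma \ref{lm:continuityofmeasuresabstractapproach} then yields $M_{\mathcal T(\rho_l)}\to M_{\mathcal T(\rho)}$ weakly, and since $M_{\mathcal T(\rho_l)}=\mu_l\to\mu$ weakly, uniqueness of weak limits forces $M_{\mathcal T(\rho)}=\mu$. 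The whole argument thus reduces to establishing the two uniform bounds.

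\emph{Lower bound.} Pick $x_l\in X$ with $\rho_l(x_l)=R_0$. Since $\rho_l=\max_i h_{b_i^l,p_i^l}$, some index $i=i(l)$ satisfies $h_{b_i^l,p_i^l}(x_l)=R_0$. Hypothesis (i) with $R_1=R_0$ then gives $C_{R_0}^{-1}\le h_{b_i^l,p_i^l}\le C_{R_0}$ on $X$, and since the maximum dominates each of its members, $\rho_l\ge h_{b_i^l,p_i^l}\ge C_{R_0}^{-1}$ throughout $X$.

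\emph{Upper bound.} Fix $x_1\in X$ and choose $j=j(l)$ so that $\rho_l(x_1)=h_{b_j^l,p_j^l}(x_1)$. At the point $x_l$ above we then have $h_{b_j^l,p_j^l}(x_l)\le \rho_l(x_l)=R_0$. The new hypothesis $R_0<\lim_{t\to\alpha_y^+}h_{t,y}(x)$ now enters: together with (A3')(c), which drives $h_{t,p_j^l}(x_l)$ to $0$ as $t\to\beta_{p_j^l}$, and the continuity (A3')(d), the intermediate value theorem supplies $b''_l$ with $h_{b''_l,p_j^l}(x_l)=R_0$, and the monotonicity (A3')(b) forces $b''_l\le b_j^l$. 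Hypothesis (i) with $R_1=R_0$ then gives $h_{b''_l,p_j^l}\le C_{R_0}$ on all of $X$, while (A3')(b) gives $h_{b_j^l,p_j^l}\le h_{b''_l,p_j^l}$, so $\rho_l(x_1)\le C_{R_0}$.

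The main subtlety, and the reason the statement contains the hypothesis $R_0<\lim_{t\to\alpha_y^+}h_{t,y}(x)$ absent in Theorem \ref{thm:uniquenessabstractcase}, is precisely this interpolation step used for the upper bound: to replace $b_j^l$ by some $b''_l$ at which $h_{\cdot,p_j^l}(x_l)$ takes the target value $R_0$, one must know that $R_0$ lies strictly below the supremum of the range of $t\mapsto h_{t,p_j^l}(x_l)$. In the concave case this was automatic because the monotonicity ran in the opposite direction and the building blocks decayed to $0$ at the correct endpoint $\alpha_{y_0}$; here they decay to $0$ at the wrong end ($\beta_{y_0}$), so the lower-endpoint value must be prescribed to be large enough.
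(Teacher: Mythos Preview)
Your proof is correct and follows essentially the same route as the paper: reduce to uniform two-sided bounds on $\rho_l$, obtain the lower bound from a building block touching $\rho_l$ at $x_l$ with value $R_0$, and obtain the upper bound by sliding the building block touching at $x_1$ down to a parameter $b''_l\le b_j^l$ at which its value at $x_l$ equals $R_0$ (using the hypothesis $R_0<\lim_{t\to\alpha_y^+}h_{t,y}(x)$ and (A3')(b),(d)). The invocation of (A3')(c) in your upper-bound step is superfluous---the inequality $h_{b_j^l,p_j^l}(x_l)\le R_0$ already gives the lower endpoint for the intermediate value argument---but it does no harm, and your closing commentary on why the extra hypothesis is needed is accurate.
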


\begin{proof} 
By (ii) and Lemma \ref{lm:continuityofmeasuresabstractapproach}, it suffices to show $\{\rho_l\}$ is bounded from below and above.
Assume $\rho_l(x_l)=R_0$ for some $x_l\in X$.   Then there exists $h_{b_l,y_l}$ such that
$\rho_l\geq h_{b_l,y_l}$ and $R_0=\rho_l(x_l)= h_{b_l,y_l}(x_l)$.  By (i), 
$C_{R_0}^{-1} \leq h_{b_l,y_l} \leq C_{R_0}$ for some $C_{R_0}$.  Therefore, $\rho_l\geq C_{R_0}^{-1}$.
To get an upper bound, given $x_1\in X$, there exists $h_{b'_l,y'_l}$ such that
$\rho_l\geq h_{b'_l,y'_l}$ and $\rho_l(x_1)= h_{b'_l,y'_l}(x_1)$.   Hence, $R_0\geq 
h_{b'_l,y'_l}(x_l)$.   Since $R_0<\lim_{t\to \alpha_{y'_l}^+} h_{t,y'_l}(x_l)$, 
there exists $b_l'' \leq b'_l$ with $R_0=h_{b_l'',y'_l}(x_l)$.
It follows from (A3')(b) that $\rho_l(x_1)\leq  h_{b"_l,y'_l}(x_1) \leq C_{R_0}$.
Hence $\rho_l \leq C_{R_0}$.
\end{proof}

\subsection{Convex case infinity}\label{subsect:convexcase}
We assume here that the class $\mathcal F\subset C(X)$ is $\mathcal T$-convex, i.e., 
$\mathcal F$ satisfies (A1') and there 
exists a map $\mathcal T:\mathcal F\to C_s(X,Y)$  that
is continuous at each $\phi\in \mathcal F$ and (A2') holds.
We also consider the following condition
\begin{enumerate}
\item[(A3'')] For each $y_0\in Y$ there exists an interval $(\alpha_{y_0},\beta_{y_0})$
and a family of functions
$\left\{h_{t,y_0}(x)\right\}_{\alpha_{y_0}<t<\beta_{y_0}}\subset \mathcal F$ satisfying 
\begin{enumerate}
\item[(a)] $y_0\in \mathcal T(h_{t,y_0})(x)$ for all $x\in X$,
\item[(b)] $h_{t,y_0}\leq h_{s,y_0}$ for $t\leq s$,
\item[(c)] $h_{t,y_0}\to +\infty$ uniformly as $t\to \beta_{y_0}$,
\item[(d)] $h_{t,y_0}$ is continuous in $C(X)$ with respect to $t$, i.e., 
$\max_{x\in X}|h_{t',y_0}(x)-h_{t,y_0}(x)|\to 0$ as $t'\to t$, for $\alpha_{y_0}<t<\beta_{y_0}$.
\end{enumerate}
\end{enumerate}
Under these assumptions we prove the following theorem.
\begin{theorem}\label{thm:abstractcasediscritecaseconvexcase}
Let $X,Y$ be compact metric spaces and $\omega$ be a Radon measure in $X$.
Let $p_1,\cdots ,p_N$ be distinct points in $Y$, and $g_1,\cdots ,g_N$ be positive numbers with $N\geq 2$.

Let $\mathcal F\subset C(X)$ 
and  $\mathcal T:\mathcal F\to C_s(X,Y)$ be such that 
$\mathcal F$ is $\mathcal T$-convex and (A3'') holds.

Assume that 
\begin{equation}\label{eq:conservationofenergyomegabis}
\omega (X)=\sum_{i=1}^N g_i,
\end{equation}
and there exists $\rho_0=\max_{1\leq i\leq N}h_{b_i^0,p_i}$ such that 
$\mathcal M_{\mathcal T(\rho_0)}(p_i)\leq g_i$ for $2\leq i \leq N$.
Then there exist $b_i\in (\alpha_{p_i},\beta_{p_i})$, $2\leq i\leq N$, 
such that the function, with $b_1=b_1^0$,
\[
\rho(x)=\max_{1\leq i \leq N}h_{b_i,p_i}(x)
\]
satisfies
\[
M_{\mathcal T(\rho)}=\sum_{i=1}^N g_i\,\delta_{p_i}.
\]
\end{theorem}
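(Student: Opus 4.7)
The plan is to mirror the proof of Theorem~\ref{thm:abstractcasediscritecaseconvexcaseBIS}, with the direction of monotonicity in $t$ reversed and the blow-up of $h_{t,y_0}$ at $\beta_{y_0}$ replacing its vanishing there. Fix $b_1=b_1^0$ from the hypothesis and consider
\[
W=\left\{(b_2,\ldots,b_N):b_i^0\le b_i<\beta_{p_i},\ M_{\mathcal T(\rho)}(p_i)\le g_i,\ 2\le i\le N\right\},
\]
where $\rho=\max_{1\le i\le N}h_{b_i,p_i}$. The hypothesis places $(b_2^0,\ldots,b_N^0)$ in $W$, so $W\neq\emptyset$, and the desired solution will come from maximizing $\sum_{i=2}^N b_i$ on $W$.

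As a first step, paralleling the corresponding step of the previous theorem, I verify that $M_{\mathcal T(\rho)}$ is supported on $\{p_1,\ldots,p_N\}$: if $x_0\in\mathcal T(\rho)^{-1}(Y\setminus\{p_1,\ldots,p_N\})$, some $p\neq p_i$ lies in $\mathcal T(\rho)(x_0)$, but $\rho(x_0)=h_{b_k,p_k}(x_0)$ for some $k$, so (A2') and (A3'')(a) force $p_k\in\mathcal T(\rho)(x_0)$ as well, and $\mathcal T(\rho)$ is not single-valued at $x_0$.

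The main technical step is showing that $W$ is compact. Closedness follows as in Theorem~\ref{thm:abstractcasediscritecaseconvexcaseBIS}: uniform convergence of the $h_{b_i,p_i}$ under (A3'')(d) together with Lemma~\ref{lm:continuityofmeasuresabstractapproach} gives continuity of $M_{\mathcal T(\rho)}(p_i)$ in $(b_2,\ldots,b_N)$. The main obstacle I foresee is the uniform separation $b_i<\beta_{p_i}$: suppose a sequence $(b_2^n,\ldots,b_N^n)\in W$ had $b_i^n\to\beta_{p_i}$ for some $i\ge 2$. Passing to subsequences, every coordinate either converges inside $(\alpha_{p_j},\beta_{p_j})$ or tends to $\beta_{p_j}$; let $I$ be the set of the latter indices. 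By (A3'')(c), $h_{b_j^n,p_j}\to\infty$ uniformly for $j\in I$ while $h_{b_j^n,p_j}$ stays uniformly bounded for $j\notin I$ (and $1\notin I$ since $b_1$ is fixed). For large $n$, at any single-valued point $z$ of $\mathcal T(\rho_n)$ the support-type argument above forces $\rho_n(z)=h_{b_k^n,p_k}(z)$ with $k\in I$; hence $M_{\mathcal T(\rho_n)}(p_j)=0$ for $j\notin I$, giving the contradiction $\omega(X)=\sum_{j\in I}M_{\mathcal T(\rho_n)}(p_j)\le\sum_{j\in I}g_j\le\omega(X)-g_1<\omega(X)$.

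Finally, let $(a_2,\ldots,a_N)\in W$ maximize $\sum_{i=2}^Nb_i$ and set $\rho=\max_{1\le i\le N}h_{a_i,p_i}$ with $a_1=b_1^0$. If $M_{\mathcal T(\rho)}(p_2)<g_2$, put $\bar a=(a_2+\epsilon,a_3,\ldots,a_N)$ and $\bar\rho=\max_{1\le i\le N} h_{\bar a_i,p_i}$. By (A3'')(b), $\bar\rho\ge\rho$, and continuity gives $M_{\mathcal T(\bar\rho)}(p_2)<g_2$ for small $\epsilon$. For $i\ge 3$, the inclusion $\mathcal T(\bar\rho)^{-1}(p_i)\subset\mathcal T(\rho)^{-1}(p_i)$ modulo a null set goes through as in Theorem~\ref{thm:abstractcasediscritecaseconvexcaseBIS}: at a single-valued point $x_0$ of $\mathcal T(\bar\rho)^{-1}(p_i)$, (A2') and (A3'')(a) rule out $\bar\rho(x_0)=h_{\bar a_j,p_j}(x_0)>h_{a_i,p_i}(x_0)$ for $j\neq i$, so $\bar\rho(x_0)=h_{a_i,p_i}(x_0)$; combined with $\bar\rho\ge\rho\ge h_{a_i,p_i}$ this forces $\rho(x_0)=h_{a_i,p_i}(x_0)$ and hence $p_i\in\mathcal T(\rho)(x_0)$. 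Therefore $\bar a\in W$ with a strictly larger sum, contradicting maximality; hence $M_{\mathcal T(\rho)}(p_i)=g_i$ for $i\ge 2$ and, by \eqref{eq:conservationofenergyomegabis} together with the support property, $M_{\mathcal T(\rho)}(p_1)=g_1$.
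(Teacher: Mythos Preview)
Your argument is correct, but it takes a different route from the paper's. The paper's proof is a short reduction: it sets $\mathcal F^*=\{e^{-f}:f\in\mathcal F\}$ and $\mathcal T^*(e^{-f})=\mathcal T(f)$, observes that $\mathcal F^*$ is $\mathcal T^*$-concave and that $h^*_{t,y_0}=e^{-h_{-t,y_0}}$ on the reflected interval $(-\beta_{y_0},-\alpha_{y_0})$ satisfies (A3), and then invokes Theorem~\ref{thm:abstractcasediscritecase} directly. The transformation converts the blow-up $h_{t,y_0}\to+\infty$ into the vanishing $h^*_{t,y_0}\to 0$, so no new compactness argument is needed.

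Your direct approach instead repeats the optimization scheme of Theorem~\ref{thm:abstractcasediscritecaseconvexcaseBIS} with the orientation reversed: you maximize $\sum b_i$ over $W=\{b_i^0\le b_i<\beta_{p_i},\ M_{\mathcal T(\rho)}(p_i)\le g_i\}$, and your key new step is the blow-up argument showing the $b_i$ stay uniformly below $\beta_{p_i}$. That argument is sound: once the $h_{b_j^n,p_j}$ with $j\in I$ dominate, every single-valued point of $\mathcal T(\rho_n)$ is mapped into $\{p_j:j\in I\}$, forcing $M_{\mathcal T(\rho_n)}(p_1)=0$ and contradicting $g_1>0$. The endgame (perturb $a_2\mapsto a_2+\epsilon$ and check $\bar a\in W$) is a correct mirror image of the earlier proof. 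The paper's reduction is shorter and avoids repeating the compactness/optimization machinery; your proof is self-contained and makes the mechanism explicit without the exponential trick.
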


\begin{proof}
We convert this case to the concave case considered in Subsection \ref{subsect:convexcaseBIS}, 
and use Theorem \ref{thm:abstractcasediscritecase} to prove the theorem.
Consider the family $\mathcal F^*=\{e^{-f}: f\in \mathcal F\}\subset C^+(X)$ and the mapping $\mathcal T^*: \mathcal F^*\to C_s(X,Y)$
given by $\mathcal T^*(e^{-f})=\mathcal T(f)$.  It is easy to verify that (A1) and (A2) hold and hence $\mathcal F^*$ is $\mathcal T^*$-concave.
To verify (A3), for $y_0\in Y$, consider the interval $(-\beta_{y_0}, -\alpha_{y_0})$ and $h^*_{t, y_0}(x)=e^{-h_{-t, y_0}(x)}$.
Obviously, $\{h^*_{t, y_0}(x)\}_{-\beta_{y_0}<t<-\alpha_{y_0}} \subset \mathcal F^*$ and satisfies (A3)(a)-(d).
Set $\rho^*_0=e^{-\rho_0}=\min_{1\leq i \leq N} h^*_{-b^0_i, p_i}$. By definition of $\mathcal T^*$, 
$\mathcal T^*(\rho^*_0)=\mathcal T(\rho_0)$, and consequently $\mathcal M_{\mathcal T^*(\rho^*_0)}(p_i)= \mathcal M_{\mathcal T(\rho_0)}(p_i)\leq g_i$.
By Theorem \ref{thm:abstractcasediscritecase}, there exists $\rho^*(x)=\min_{1\leq i \leq N} h^*_{-b_i, p_i}$ satisfying the equation
$\mathcal M_{\mathcal T^*(\rho)}=\sum_{i=1}^N g_i \delta_{p_i}$.  Since $\rho^*=e^{-\rho}$, $\mathcal M_{\mathcal T(\rho)}=\sum_{i=1}^N g_i \delta_{p_i}$. 

\end{proof}

\setcounter{equation}{0}
\section{Snell's law of refraction}\label{sec:snelllaw}
Suppose $\Gamma$ is a surface in $\R^n$ that separates two media
I and II that are homogeneous and isotropic. Let $v_1$ and
$v_2$ be the velocities of propagation of light in the media I and
II respectively. The index of refraction of medium I is 
$n_1=c/v_1$, where $c$ is the velocity of propagation
of light in the vacuum, and similarly $n_2=c/v_2$. If a
ray of light\footnote{Since the refraction angle depends on the frequency of the radiation, we assume our light ray is monochromatic.} having direction $x\in S^{n-1}$ and traveling
through medium I hits $\Gamma$ at the point $P$,
and $\nu$ is the unit
normal to $\Gamma$ at $P$ going towards medium II, then this ray
is refracted in the direction $m\in S^{n-1}$ through medium II
according with the Snell law in vector form:
the vectors $x,\nu$ and $m$ are all coplanar, and the
vector $n_2m -n_1 x$ is parallel to the normal vector $\nu$, that is,
setting $\kappa=n_2/n_1$, we have
\begin{equation}\label{eq:snellvectorform}
x-\kappa \,m =\lambda \nu,
\end{equation}
for some $\lambda\in \R$. 
Making the vector product of this equation with the normal $\nu$ we obtain the
well known form of the Snell law:
$n_1\sin \theta_1= n_2\sin
\theta_2$, where $\theta_1$ is the angle between $x$ and $\nu$
(the angle of incidence),
$\theta_2$ the angle between $m$ and $\nu$ (the angle of refraction).

When $\kappa<1$, or equivalently $v_1<v_2$,
waves propagate in medium II faster than in medium I,
or equivalently, medium I is denser than medium II.
In this case the refracted rays tend to bent away from the normal,
that is the case
for example, when medium I is glass and medium II is air.
In case $\kappa>1$,
waves propagate in medium I faster than in medium II,
and the refracted rays tend to bent towards the normal.

We summarize the physical constraints of
refraction in the following lemma whose proof is in \cite{gutierrez-huang:farfieldrefractor}.
\begin{lemma}
Let $n_1$ and $n_2$ be the indices of refraction of two media I and II,
respectively, and $\kappa=n_2/n_1$.
Then a light ray in medium I with direction
$x\in S^{n-1}$ is refracted by some surface into a light ray with direction
$m\in S^{n-1}$ in medium II
if and only if $m\cdot x\ge \kappa$, when $\kappa<1$; and
if and only if $m\cdot x\ge 1/\kappa$, when $\kappa>1$.

\end{lemma}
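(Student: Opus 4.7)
\medskip
\noindent\textbf{Proof plan.} The plan is to reduce the lemma to an algebraic sign analysis of the vector Snell law \eqref{eq:snellvectorform} together with the physical orientation constraints on the normal $\nu$. First I would record that, since the ray travels from medium I into medium II and $\nu$ is oriented toward medium II, every admissible configuration must satisfy $x\cdot\nu\ge 0$ and $m\cdot\nu\ge 0$. Conversely, once a unit vector $\nu$ with these properties is produced, any surface through $P$ having $\nu$ as its unit normal there (for instance the tangent plane) refracts $x$ into $m$, so the problem really is to decide exactly when such a $\nu$ exists.

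From \eqref{eq:snellvectorform} I would then compute
\[
\lambda^{2}=|x-\kappa m|^{2}=1-2\kappa(x\cdot m)+\kappa^{2},
\]
and, by taking inner products with $x$ and with $m$ respectively,
\[
x\cdot\nu=\frac{1-\kappa(x\cdot m)}{\lambda},\qquad m\cdot\nu=\frac{(x\cdot m)-\kappa}{\lambda}.
\]
Because $\kappa\ne 1$ one has $|x-\kappa m|>0$ (otherwise $|x|=\kappa|m|$ would give $\kappa=1$), so $\nu=(x-\kappa m)/\lambda$ with $\lambda=\pm|x-\kappa m|$, the two sign choices corresponding to the two possible orientations of $\nu$.

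The main step is the case split. When $\kappa<1$, the trivial bound $x\cdot m\le 1<1/\kappa$ forces $1-\kappa(x\cdot m)>0$, so $x\cdot\nu\ge 0$ requires $\lambda>0$; the remaining inequality $m\cdot\nu\ge 0$ then becomes $x\cdot m\ge\kappa$. When $\kappa>1$, one has instead $(x\cdot m)-\kappa\le 1-\kappa<0$, so $m\cdot\nu\ge 0$ forces $\lambda<0$, and $x\cdot\nu\ge 0$ reduces to $1-\kappa(x\cdot m)\le 0$, i.e., $x\cdot m\ge 1/\kappa$. This gives necessity in both regimes.

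For sufficiency, given $(x,m)\in S^{n-1}\times S^{n-1}$ satisfying the appropriate inequality, I would define $\nu$ by the explicit formula $\nu:=\pm(x-\kappa m)/|x-\kappa m|$, with the sign dictated by the regime above; the two identities displayed a moment ago then show $x\cdot\nu\ge 0$ and $m\cdot\nu\ge 0$, and by construction $x-\kappa m$ is parallel to $\nu$, so any surface $\Gamma$ through a point $P$ with unit normal $\nu$ at $P$ refracts $x$ into $m$. I expect the only delicate point to be the consistent bookkeeping of the sign of $\lambda$ in each of the two regimes; once that dictionary is in place the entire proof collapses to the inner-product identities above.
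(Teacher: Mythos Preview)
Your argument is correct. The reduction to the two inner-product identities
\[
x\cdot\nu=\frac{1-\kappa(x\cdot m)}{\lambda},\qquad m\cdot\nu=\frac{(x\cdot m)-\kappa}{\lambda},
\]
together with the orientation constraints $x\cdot\nu\ge 0$, $m\cdot\nu\ge 0$, is exactly the right move, and your sign bookkeeping in the two regimes $\kappa<1$ and $\kappa>1$ is clean and complete. The observation that $|x-\kappa m|>0$ whenever $\kappa\ne 1$ disposes of any degeneracy.

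As for comparison: the paper does not actually prove this lemma here; it simply cites \cite{gutierrez-huang:farfieldrefractor} for the proof. The argument in that reference proceeds along the same lines as yours, deriving the constraint from the vector Snell law and the physical requirement that both the incident and refracted rays make an acute angle with the normal pointing into medium II. So your approach is essentially the standard one, and there is nothing to add.
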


\setcounter{equation}{0}
\section{Cartesian Ovals}\label{sec:ovals}
To resolve our problem it is important to solve first the following simpler problem:
given a point $O$ inside medium I and a point $P$ inside medium II, find an interface surface $\mathcal S$ between media I and II that refracts all rays emanating from the point $O$ into the point $P$.
Suppose $O$ is the origin, and let $X(t)$ be a curve on $\mathcal S$. 
By the Snell law of refraction the tangent vector $X'(t)$ satisfies
\[
X'(t)\cdot \left( \dfrac{X(t)}{|X(t)|} - \kappa \dfrac{P-X(t)}
{|P-X(t)|}\right)
=0.
\]
That is,
\[
|X(t)|' + \kappa  |P-X(t)|'=0.
\]
Therefore $\mathcal S$ is the Cartesian oval
\begin{equation}\label{eq:oval}
|X|+\kappa |X-P|= b.
\end{equation}
Since $f(X)=|X|+\kappa |X-P|$ is a convex function, the oval is a convex set.

In our treatment of the problem, we need to analyze the polar equation and find the refracting piece for the oval. Write $X=\rho(x)x$ with $x\in S^{n-1}$. Then 
writing $\kappa |\rho(x)x-P|=b-\rho(x)$, squaring this quantity and solving the quadratic equation yields
\begin{equation}\label{eq:definitionofrho}
\rho(x)=\dfrac{(b-\kappa^{2} x\cdot P)\pm 
\sqrt{(b- \kappa^{2}x\cdot P)^{2}- (1-\kappa^{2})(b^{2}-\kappa^{2}|P|^{2})}}{1-\kappa^{2}}.
\end{equation}
Set 
\begin{equation}\label{eq:definitionofDelta}
\Delta(t)=(b- \kappa^{2}t)^{2}- (1-\kappa^{2})(b^{2}-\kappa^{2}|P|^{2}).
\end{equation}
\subsection{Case $0<\kappa <1$}
We have
\begin{equation}\label{eq:lowerboundofdelta}
\Delta(x\cdot P)> \kappa^{2} (x\cdot P -b)^{2},
\qquad \text{if }|x\cdot P|<|P|.
\end{equation}
If $b\geq |P|$, then $O$ and $P$ are inside or on the oval, and so the oval cannot refract
rays to $P$.
If the oval is non empty, then $\kappa |P|\leq b$. In case $\kappa |P|= b$, the oval 
reduces to the point $O$. 
The only interesting case is then $\kappa |P|<b<|P|$. 
From the equation of the oval we get that $\rho(x)\leq b$.
So we now should decide which values $\pm$ to take in the definition of $\rho(x)$.
Let $\rho_{+}$ and $\rho_{-}$ be the corresponding $\rho$'s.
We claim that $\rho_{+}(x)> b$ and $\rho_{-}(x)\leq b$.
Indeed,
\begin{align*}
\rho_{+}(x)&= \dfrac{(b-\kappa^{2} x\cdot P)+ 
\sqrt{ \Delta(x\cdot P)}}{1-\kappa^{2}}\\
&\geq 
\dfrac{(b-\kappa^{2} x\cdot P)+ \kappa |b-x\cdot P|}{1-\kappa^{2}}\\
&=
b+\dfrac{\kappa^{2}(b-x\cdot P)+ \kappa |b-x\cdot P|}{1-\kappa^{2}}\\
&\geq b.
\end{align*}
The equality $\rho_+(x)=b$ holds only if $|x\cdot P|=|P|$ and $b=x\cdot P$.
So $\rho_{+}(x)>b$ if $\kappa |P|<b<|P|$.
Similarly,
\begin{align*}
\rho_{-}(x)&= \dfrac{(b-\kappa^{2} x\cdot P)-
\sqrt{ \Delta(x\cdot P)}}{1-\kappa^{2}}\\
&\leq 
\dfrac{(b-\kappa^{2} x\cdot P)- \kappa |b-x\cdot P|}{1-\kappa^{2}}\\
&=
b+\dfrac{\kappa^{2}(b-x\cdot P)- \kappa |b-x\cdot P|}{1-\kappa^{2}}\\
&\leq b.
\end{align*}
So the claim is proved.
Therefore the polar equation of the oval is then given by
\begin{equation}\label{eq:polareqovalk<1}
h(x,P,b)= \rho_{-}(x)= \dfrac{(b-\kappa^{2} x\cdot P)-
\sqrt{ \Delta(x\cdot P)}}{1-\kappa^{2}}.
\end{equation}

To find the refracting part of the oval, 
from the physical constraint for refraction (Lemma 3.1), we must have
$x\cdot \left( \dfrac{P-h(x,P,b)x}{|P-h(x,P,b)x|}\right)\geq \kappa$,
and by (4.1) it is reduced to
\begin{equation}\label{eq:xdotPbiggerthanb}
x\cdot P\geq b.
\end{equation}



\begin{figure}[htp]
\begin{center}
    \subfigure[$|X|+2/3 |X-P|=1.4-1.9$, $P=(2,0)$]{\label{fig:edge-a}\includegraphics[width=2.9in]{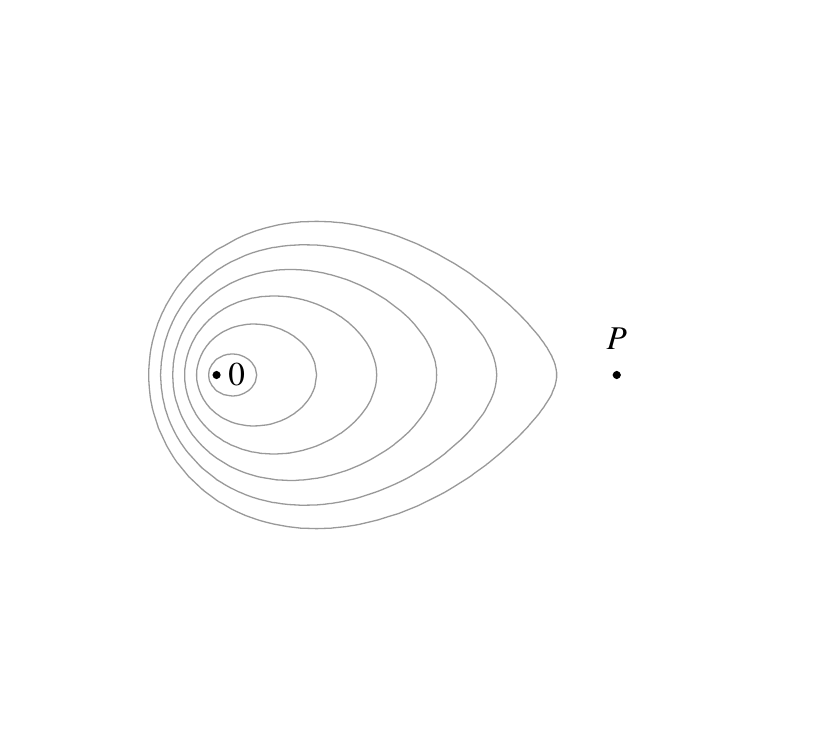}}
    \subfigure[$|X|+2/3 |X-P|=1.7$, $P=(2,0)$]{\label{fig:edge-b}\includegraphics[width=2.9in]{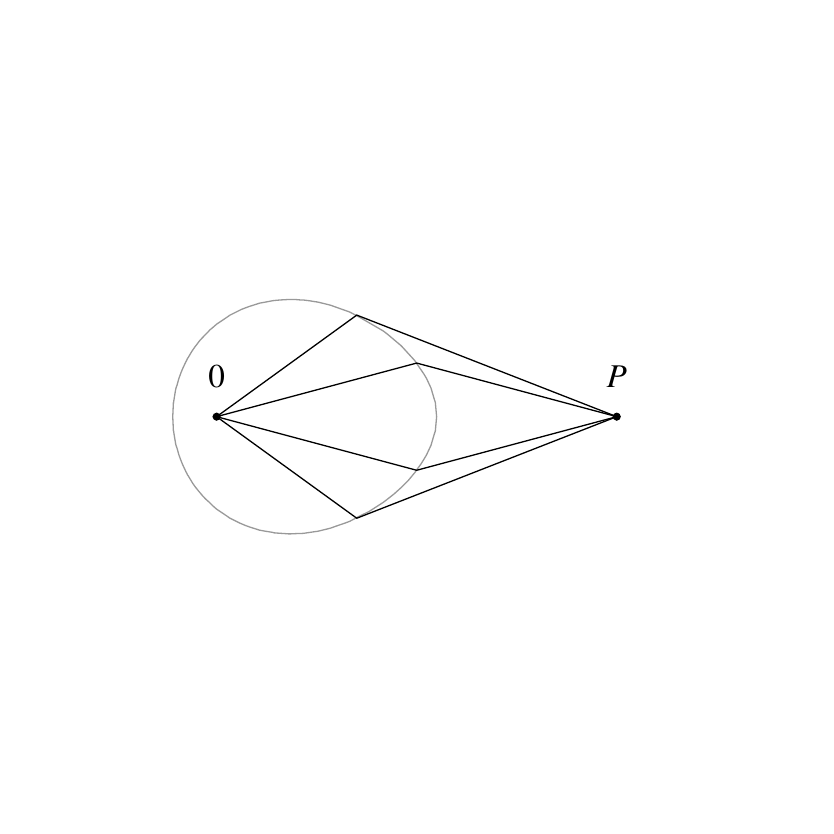}} 
\end{center}
  \caption{Cartesian ovals $\kappa<1$, e.g., glass to air}
  \label{fig:ovalskappa<1}
\end{figure}

The estimates of $h(x,P,b)$ are contained in the following lemma.

\begin{lemma}\label{lm:minandmaxofovalskappa<1}
Let $0<\kappa <1$, $h(x,P,b)$ given by \eqref{eq:polareqovalk<1}, and assume that $\kappa |P|<b<|P|$.
Then we have
\begin{equation}\label{eq:lowerestimateofrho}
\min_{x\in S^{n-1}}h(x,P,b)
=\dfrac{b-\kappa |P|}{1+\kappa},
\qquad 
\text{and}\qquad \max_{x\in S^{n-1}}h(x,P,b) =\dfrac{b-\kappa |P|}{1-\kappa}.
\end{equation}
We also have
\begin{equation}\label{eq:upperestimateofrho}
\min_{x\in S^{n-1}}|P-h(x,P,b)x|
=\dfrac{|P|-b}{1-\kappa}
=
\min_{x\cdot P\geq b}|P-h(x,P,b)x|
=\left|P-h\left(\frac{P}{|P|},P,b\right)\frac{P}{|P|}\right|,
\end{equation}
and
\begin{equation}\label{eq:maxofp-hforx.P>b}
\max_{x\cdot P\geq b}|P-h(x,P,b)x|=
\dfrac{\sqrt{|P|^{2}-b^{2}}}{\sqrt{1-\kappa^{2}}}.
\end{equation}
\end{lemma}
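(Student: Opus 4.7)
My starting observation is that all four quantities in the lemma are really just functions of the single variable $t = x\cdot P$, which ranges over $[-|P|,|P|]$. So I would first reduce everything to one dimension by writing $\rho_{-}(t)$ for the right-hand side of \eqref{eq:polareqovalk<1} with $x\cdot P$ replaced by $t$, and noting that, from the defining equation \eqref{eq:oval} of the oval,
\begin{equation*}
\bigl|P - h(x,P,b)\,x\bigr| \;=\; \frac{b - h(x,P,b)}{\kappa}.
\end{equation*}
Thus maximizing $h$ is equivalent to minimizing $|P-hx|$ and vice versa, and it suffices to understand how $\rho_{-}(t)$ varies in $t$.

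Next I would establish monotonicity of $\rho_{-}$ on $[-|P|,|P|]$. A direct differentiation yields
\begin{equation*}
\rho_{-}'(t) \;=\; \frac{\kappa^{2}}{1-\kappa^{2}}\left[\frac{b-\kappa^{2}t}{\sqrt{\Delta(t)}} - 1\right].
\end{equation*}
Since $b > \kappa|P| \geq \kappa^{2}|P| \geq \kappa^{2}t$ on the relevant interval, $b-\kappa^{2}t>0$, and so the bracket has the same sign as $(b-\kappa^{2}t)^{2} - \Delta(t) = (1-\kappa^{2})(b^{2}-\kappa^{2}|P|^{2})$, which is strictly positive under the hypothesis $\kappa|P|<b<|P|$. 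Hence $\rho_{-}$ is strictly increasing, so $h$ attains its minimum at $t=-|P|$ (i.e.\ $x=-P/|P|$) and its maximum at $t=|P|$ (i.e.\ $x=P/|P|$). Plugging these endpoints into \eqref{eq:polareqovalk<1} and simplifying the resulting radicals (for instance $\sqrt{\Delta(|P|)} = |b-\kappa|P||/? $-- more carefully $\Delta(\pm|P|) = (b\mp\kappa^{2}|P|)^{2}-(1-\kappa^{2})(b^{2}-\kappa^{2}|P|^{2}) = (b\mp\kappa|P|)^{2}$ after factoring) produces the two values in \eqref{eq:lowerestimateofrho}.

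For \eqref{eq:upperestimateofrho} and \eqref{eq:maxofp-hforx.P>b} I would now combine the identity $|P-hx|=(b-h)/\kappa$ with the monotonicity just proved: $|P-h(x,P,b)x|$ is a strictly decreasing function of $t=x\cdot P$. Therefore its minimum over all $x\in S^{n-1}$ is attained at $t=|P|$, and since $|P|\geq b$ this point lies in the refracting region $\{x\cdot P\geq b\}$, so the two minima in \eqref{eq:upperestimateofrho} coincide; substituting $h(P/|P|,P,b) = (b-\kappa|P|)/(1-\kappa)$ gives the value $(|P|-b)/(1-\kappa)$. Correspondingly the maximum of $|P-hx|$ over $\{x\cdot P\geq b\}$ is attained on the boundary $t=b$; a direct computation gives $\Delta(b)=\kappa^{2}(1-\kappa^{2})(|P|^{2}-b^{2})$, whence $h = b - \kappa\sqrt{|P|^{2}-b^{2}}/\sqrt{1-\kappa^{2}}$, and \eqref{eq:maxofp-hforx.P>b} follows.

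The only step requiring any care is the monotonicity of $\rho_{-}(t)$; once the sign of $(b-\kappa^{2}t)^{2}-\Delta(t)$ is identified with the strictly positive quantity $(1-\kappa^{2})(b^{2}-\kappa^{2}|P|^{2})$, the remainder is bookkeeping at the two endpoints $t=\pm|P|$ and $t=b$. I expect no genuine obstacle beyond keeping the algebra of the square roots clean.
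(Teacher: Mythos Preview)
Your proof is correct and follows essentially the same strategy as the paper: reduce to the single variable $t=x\cdot P$, establish that $h$ is monotone in $t$, evaluate at the endpoints $t=\pm|P|$ and $t=b$, and use the identity $|P-h(x,P,b)x|=(b-h)/\kappa$ coming from \eqref{eq:oval}. The only cosmetic differences are these: the paper rationalizes $h$ as $\dfrac{b^{2}-\kappa^{2}|P|^{2}}{g(t)}$ with $g(t)=(b-\kappa^{2}t)+\sqrt{\Delta(t)}$ and observes $g$ is decreasing (since $g'(t)=-\kappa^{2}\bigl(1+(b-\kappa^{2}t)/\sqrt{\Delta(t)}\bigr)<0$), whereas you differentiate $\rho_{-}$ directly; and for the second equality in \eqref{eq:upperestimateofrho} the paper appeals to the convexity and axial symmetry of the oval to locate the minimum at $x=P/|P|$, while your monotonicity argument already gives this for free. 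Your route is arguably a little more uniform, but there is no substantive difference.
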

\begin{proof}
We write 
\begin{equation}\label{eq:writinghdifferent}
h(x,P,b)= \dfrac{b^{2}-\kappa^{2} |P|^{2}}{(b-\kappa^{2} x\cdot P)+\sqrt{\Delta(x\cdot P)}}
\end{equation}
and let $g (t)=(b-\kappa^{2} t)+\sqrt{\Delta(t)}$. We have $g$ is decreasing for $-|P|\leq t \leq |P|$, and 
so $g(-|P|)\geq g(x\cdot P)\geq g(|P|)$.
Hence $\dfrac{b^{2}-\kappa^{2} |P|^{2}}{g(-|P|)}\leq h(x,P,b) \leq \dfrac{b^{2}-\kappa^{2} |P|^{2}}{g(|P|)}$
and calculating $g(-|P|)$ and $g(|P|)$ the estimates in \eqref{eq:lowerestimateofrho} follow.

To prove \eqref{eq:upperestimateofrho},
since $\kappa |P-h(x,P,b)x|=b-h(x,P,b)$, the first equality follows from the right identity in \eqref{eq:lowerestimateofrho}.
To show the second identity in \eqref{eq:upperestimateofrho}, notice that since the oval is convex and symmetric with respect to the line joining 0 and $P$ we have that
$\min_{x\in S^{n-1}}|P-h(x,P,b)x|$ is attained at $x=P/|P|$.
In particular, this gives the explicit value of the distance from $P$ to the oval.

To prove \eqref{eq:maxofp-hforx.P>b} we have $\max_{x\cdot P\geq b}|P-h(x,P,b)x|=\dfrac{1}{\kappa}\left(b- \min_{x\cdot P\geq b} h(x,P,b) \right)$, and we claim that $\min_{x\cdot P\geq b} h(x,P,b)=h(z,P,b),$ for all $z\cdot P=b$.
In fact, this follows from \eqref{eq:writinghdifferent} since $g(x\cdot P)\leq g(b)$, obtaining
$h(x,P,b)\geq b- \dfrac{
\sqrt{ \Delta(b)}}{1-\kappa^{2}}=b-\dfrac{\kappa}{\sqrt{1-\kappa^2}}\sqrt{|P|^2-b^2}$. 

\end{proof}

\begin{remark}\rm
If $|P|\to \infty$, then the oval converges to an ellipsoid which is the surface having the uniform refraction property in the far field case, see \cite{gutierrez-huang:farfieldrefractor}.
In fact, if $m=P/|P|$ and $b=\kappa |P|+C$ with $C$ positive constant we have
\begin{align*}
h(x,P,b)&=\dfrac{b^{2}- \kappa^{2}|P|^{2}}{b-\kappa^{2}x\cdot P+ \sqrt{\Delta(x\cdot P)}}\\
&=
\dfrac{C(2\kappa |P| +C)}{(\kappa |P|-\kappa^{2}x\cdot m |P|+C)
+\sqrt{(\kappa |P|-\kappa^{2}x\cdot m |P|+C)^{2}-(1-\kappa^{2})C(2\kappa |P|+C)}}\\
&\quad \to 
\dfrac{2\kappa C}{(\kappa -\kappa^{2}x\cdot m)+\sqrt{(\kappa -\kappa^{2}x\cdot m)^{2}}}
=
\dfrac{C}{1-\kappa x\cdot m}
\end{align*}
as $|P|\to \infty$.
\end{remark}

\subsection{Case $\kappa >1$}\label{subsect:estimatesofovalskappa>1}
In this case we must have $|P|\leq b$, and in case $b=|P|$ the oval reduces to the point $P$. Also $b< \kappa |P|$, since otherwise the points $0,P$ are inside the oval or
$0$ is on the oval, and therefore there cannot be refraction if $b\geq \kappa |P|$.
So to have refraction we must have $|P|<b<\kappa |P|$ and so the point $P$ is inside the oval and $0$ is outside the oval.

Rewriting $\rho$ in \eqref{eq:definitionofrho} we get that
\[
\rho_{\pm}(x)=\dfrac{(\kappa^{2} x\cdot P-b)\pm 
\sqrt{(\kappa^{2}x\cdot P-b)^{2}- (\kappa^{2}-1)(\kappa^{2}|P|^{2}-b^{2})}}{\kappa^{2}-1}.
\]
Now $\Delta(x\cdot P)\geq 0$ amounts
$x\cdot P\geq \dfrac{b+\sqrt{(\kappa^{2}-1)(\kappa^{2}|P|^{2}-b^{2})}}{\kappa^{2}}$,
by Noticing that $\rho_{\pm}(x)<0$ if $\kappa^{2}x\cdot P-b<0$.
We have that $\rho_{-}(x)\leq \rho_{+}(x)\leq
\dfrac{(\kappa^{2}|P|-b)+\sqrt{\Delta(|P|)}}{\kappa^{2}-1}=\dfrac{\kappa |P|+b}{\kappa +1}<b$.
To have refraction, by the physical constraint we need to have
$x\cdot \dfrac{P-x\rho_{\pm}(x)}{|P-x\rho_{\pm}(x)|}\geq 1/\kappa$,
which is equivalent to $\kappa^{2}x\cdot P-b\geq (\kappa^{2}-1)\rho_{\pm}(x)$.
Therefore, the physical constraint is satisfied only by $\rho_{-}$.

For $|P|<b<\kappa |P|$, the refracting piece of the oval is then
given by
\begin{equation}\label{eq:equationovalk>1}
\mathcal O(P,b)
=
\left\{h(x,P,b)x:x\cdot P\geq  \dfrac{b+\sqrt{(\kappa^{2}-1)(\kappa^{2}|P|^{2}-b^{2})}}{\kappa^{2}}\right\}
\end{equation}
with 
\begin{equation}\label{eq:polareqovalk>1}
h(x,P,b)=\rho_{-}(x)=
\dfrac{(\kappa^{2} x\cdot P-b)- 
\sqrt{(\kappa^{2}x\cdot P-b)^{2}- (\kappa^{2}-1)(\kappa^{2}|P|^{2}-b^{2})}}{\kappa^{2}-1}.
\end{equation}
Let us define
\begin{equation}\label{eq:definitionofIPb}
I(P,b):=\dfrac{b+\sqrt{(\kappa^{2}-1)(\kappa^{2}|P|^{2}-b^{2})}}{\kappa^{2}|P|},
\end{equation}
and let 
\begin{equation}\label{eq:setofdirectionskappa>1}
\Gamma(P,b)=\{x\in S^{n-1}:x\cdot P\geq I(P,b)|P|\},
\end{equation}
that is, $\Gamma(P,b)$ denotes the set of directions in $\mathcal O(P,b)$.

We notice that $I(P,b)$
is decreasing as a function of $b$ and tends to one when $b\to |P|^{+}$, and 
tends to $1/\kappa$ when $b\to (\kappa |P|)^{-}$.

If $|P|\to \infty$, then the oval $\mathcal O(P,b)$ converges to the semi hyperboloid appearing in the far field refraction problem when $\kappa>1$, see \cite{gutierrez-huang:farfieldrefractor}.
Indeed, let $m=\dfrac{P}{|P|}\in S^{n-1}$ and $b=\kappa |P|-a$ with $a>0$ a constant.
Then we have
\[
\dfrac{b+\sqrt{(\kappa^{2}-1)(\kappa^{2}|P|^{2}-b^{2})}}{\kappa^{2}|P|}
=\dfrac{\kappa |P|-a+\sqrt{(\kappa^{2}-1)(\kappa^{2}|P|^{2}-(\kappa |P|-a)^{2})}}{\kappa^{2}|P|}\to \dfrac{1}{\kappa}
\]
as $|P|\to \infty$.
On the other hand, if $x\cdot m> 1/\kappa$, we get
\begin{align*}
h(x,P,b)&=\dfrac{a(2\kappa |P|-a)}{(\kappa^{2}|P|x\cdot m-\kappa |P|+a)
+
\sqrt{(\kappa^{2}|P|x\cdot m-\kappa |P|+a)^{2}-(\kappa^{2}-1)a(2\kappa |P|-a)}}\\
&\qquad \to
\dfrac{a2\kappa}{\kappa^{2}x\cdot m-\kappa+\sqrt{(\kappa^{2}x\cdot m -\kappa)^{2}}}=
\dfrac{a}{\kappa x\cdot m-1},
\end{align*}
as $|P|\to \infty$.

\begin{figure}[htp]
\begin{center}
    \subfigure[$|X|+3/2 |X-P|=2.9-2.4$, $P=(2,0)$]{\label{fig:edge-a}\includegraphics[width=2.9in]{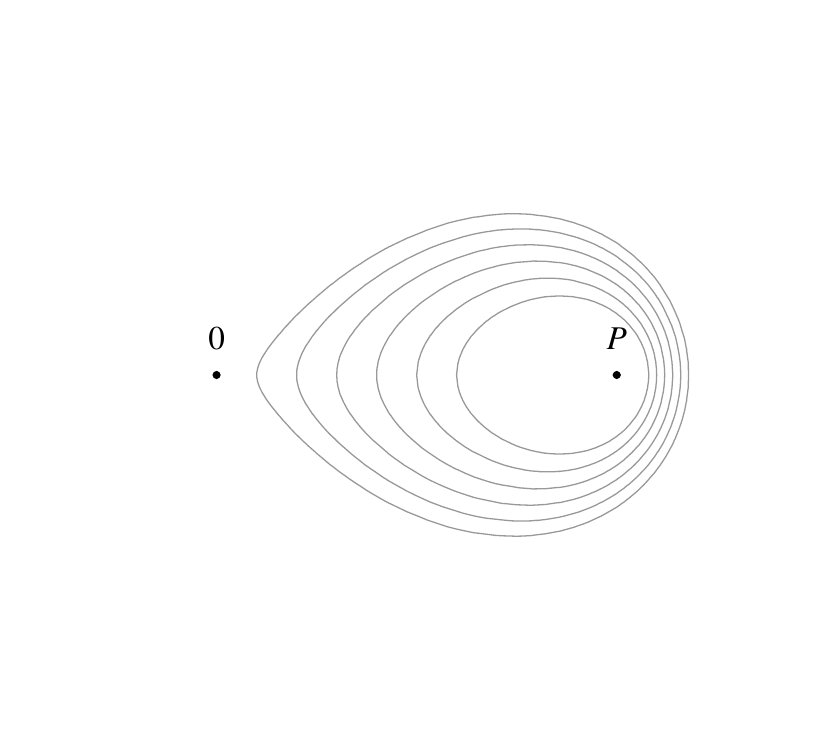}}
    \subfigure[$|X|+3/2 |X-P|=2.7$, $P=(2,0)$]{\label{fig:edge-b}\includegraphics[width=2.9in]{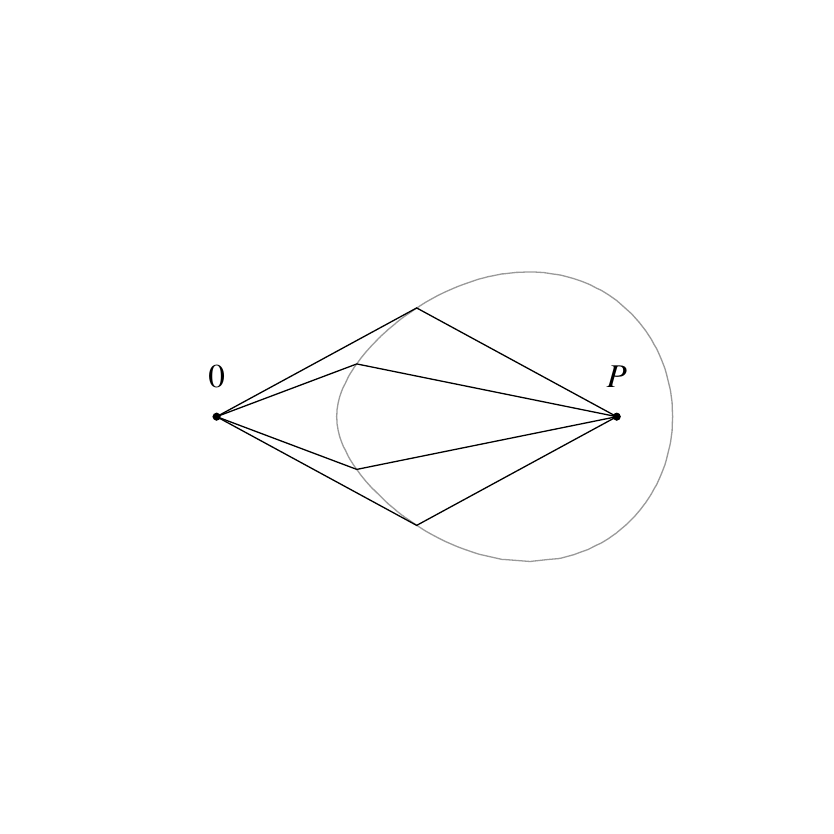}} 
\end{center}
  \caption{Cartesian ovals $\kappa>1$, e.g., air to glass}
  \label{fig:ovalskappa>1}
\end{figure}

The following lemma gives estimates for the size of $h(x,P,b)$.
\begin{lemma}\label{lm:estimateofovalsk>1}
Let $\kappa>1$, $h(x,P,b)$ is given by \eqref{eq:polareqovalk>1}, assume $|P|<b<\kappa |P|$,
and $\Gamma(P,b)$ given by \eqref{eq:setofdirectionskappa>1}.
We have 
\begin{enumerate}
\item[(a)] $\displaystyle \min_{x\in \Gamma(P,b)}h(x,P,b)=
\dfrac{\kappa |P|-b}{\kappa -1}$;
\item[(b)] $\displaystyle \max_{x\in \Gamma(P,b)}h(x,P,b)=
\dfrac{\sqrt{\kappa^{2}|P|^{2}-b^{2}}}{\sqrt{\kappa^{2} -1}}\leq \sqrt{2|P|}\sqrt{\dfrac{\kappa |P|-b}{\kappa -1}}$;
\item[(c)] $\displaystyle \dfrac{b-|P|}{\kappa} \leq |P-h(x,P,b)x|\leq
\dfrac{b-|P|}{\kappa-1}$, \quad for $x\in \Gamma(P,b)$;
\item[(d)] 
The following inequalities hold: 
\begin{equation*}
\dfrac{\sqrt{\kappa |P|-b}}{\sqrt{|P|}}
\dfrac{\kappa^{2}+\kappa -2}{2\kappa \sqrt{2\kappa (\kappa^{2}-1)}}
\leq I(P,b)-\dfrac{1}{\kappa}
\leq
\dfrac{\sqrt{\kappa |P|-b}}{\sqrt{|P|}}
\dfrac{2\sqrt{\kappa -1}}{\kappa},
\end{equation*}
where $I(P,b)$ is given by \eqref{eq:definitionofIPb}.
\end{enumerate}
\end{lemma}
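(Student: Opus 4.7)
The plan is to rationalize the formula \eqref{eq:polareqovalk>1} by multiplying numerator and denominator by the conjugate, obtaining
\[
h(x,P,b)=\dfrac{\kappa^{2}|P|^{2}-b^{2}}{(\kappa^{2}x\cdot P-b)+\sqrt{\Delta(x\cdot P)}},
\]
where $\Delta(t)=(\kappa^{2}t-b)^{2}-(\kappa^{2}-1)(\kappa^{2}|P|^{2}-b^{2})$. Let $g(t)=(\kappa^{2}t-b)+\sqrt{\Delta(t)}$. For $x\in\Gamma(P,b)$ we have $\kappa^{2}x\cdot P-b\geq \sqrt{(\kappa^{2}-1)(\kappa^{2}|P|^{2}-b^{2})}>0$, so differentiation shows $g$ is strictly increasing on the admissible range of $t=x\cdot P$. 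Therefore $h$ attains its minimum at $t=|P|$ and its maximum at $t=I(P,b)|P|$ (where $\Delta=0$), which drives parts (a) and (b).

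For (a), a direct computation gives $\Delta(|P|)=\kappa^{2}(b-|P|)^{2}$, hence $g(|P|)=(\kappa-1)(\kappa|P|+b)$ and $\min h=(\kappa|P|-b)/(\kappa-1)$. For (b), at the boundary $\Delta=0$ yields $g=\sqrt{(\kappa^{2}-1)(\kappa^{2}|P|^{2}-b^{2})}$, so $\max h=\sqrt{(\kappa^{2}|P|^{2}-b^{2})/(\kappa^{2}-1)}$. The second inequality in (b) follows by factoring $\kappa^{2}|P|^{2}-b^{2}=(\kappa|P|-b)(\kappa|P|+b)\leq 2\kappa|P|(\kappa|P|-b)$ and observing $2\kappa/(\kappa^{2}-1)\leq 2/(\kappa-1)$. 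Part (c) is then a corollary: the Snell identity $\kappa|P-h(x,P,b)x|=b-h(x,P,b)$ (which follows from $X$ lying on the oval \eqref{eq:oval}) converts extremes of $h$ into extremes of $|P-hx|$; the upper bound gives exactly $(b-|P|)/(\kappa-1)$ using (a), and the lower bound $(b-|P|)/\kappa$ reduces to checking $\max h\leq |P|$, equivalently $(\kappa^{2}|P|^{2}-b^{2})/(\kappa^{2}-1)\leq |P|^{2}$, which simplifies to $b^{2}\geq|P|^{2}$, true by hypothesis.

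The delicate part is (d). Introducing $u=\kappa|P|-b\in(0,(\kappa-1)|P|)$ one has $\kappa^{2}|P|^{2}-b^{2}=u(2\kappa|P|-u)$, and after rationalizing,
\[
I(P,b)-\frac{1}{\kappa}
=\frac{\sqrt{(\kappa^{2}-1)u(2\kappa|P|-u)}-u}{\kappa^{2}|P|}
=\frac{u\bigl[2\kappa(\kappa^{2}-1)|P|-\kappa^{2}u\bigr]}{\kappa^{2}|P|\bigl[\sqrt{(\kappa^{2}-1)u(2\kappa|P|-u)}+u\bigr]}.
\]
For the upper bound, I would bound the numerator by $2\kappa(\kappa^{2}-1)|P|u$ and estimate the denominator from below using $2\kappa|P|-u\geq(\kappa+1)|P|$ (which uses $u\leq(\kappa-1)|P|$) and discarding the $+u$ term; the constants then collapse to $\tfrac{2\sqrt{\kappa-1}}{\kappa}$. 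For the lower bound, the numerator is bounded below by $\kappa(\kappa-1)(\kappa+2)|P|u$ (again using $u\leq(\kappa-1)|P|$, since $2\kappa(\kappa^{2}-1)-\kappa^{2}(\kappa-1)=\kappa(\kappa-1)(\kappa+2)$), while the denominator is bounded above by $\kappa^{2}|P|\sqrt{2\kappa(\kappa^{2}-1)|P|u}\bigl[1+\tfrac{1}{\sqrt{2\kappa(\kappa+1)}}\bigr]$, after estimating $u\leq\sqrt{(\kappa-1)|P|u}=\sqrt{2\kappa(\kappa^{2}-1)|P|u}/\sqrt{2\kappa(\kappa+1)}$ and $2\kappa|P|-u\leq 2\kappa|P|$; since $\sqrt{2\kappa(\kappa+1)}\geq 2$ for $\kappa>1$, the bracketed factor is at most $2$, producing the advertised $\tfrac{\kappa^{2}+\kappa-2}{2\kappa\sqrt{2\kappa(\kappa^{2}-1)}}$. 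The main obstacle is making these two-sided constant bounds in (d) come out sharp enough to match the stated formula; the key trick is the change of variable $u=\kappa|P|-b$ together with the elementary estimate $u\leq\sqrt{(\kappa-1)|P|u}$.
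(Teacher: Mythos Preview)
Your proof is correct and follows essentially the same strategy as the paper: rationalize $h$ and use monotonicity of the denominator for (a) and (b), use the oval identity $\kappa|P-hx|=b-h$ for (c), and rationalize $I(P,b)-1/\kappa$ for (d). The only cosmetic differences are that the paper obtains the upper bound in (c) via the triangle inequality $|P|+(\kappa-1)|P-\rho x|\leq \rho+\kappa|P-\rho x|=b$ rather than invoking (a), and your treatment of (d) (with the substitution $u=\kappa|P|-b$) spells out the denominator bounds that the paper leaves implicit.
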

\begin{proof}
(a) The minimum of $h(x,P,b)$ is attained when $x=P/|P|$. So
\[
\displaystyle \min_{x\in \Gamma(P,b)}h(x,P,b)
=h(P/|P|,P,b)=
\dfrac{\kappa^{2}|P|^{2}-b^{2}}{(\kappa^{2}|P|-b)+\sqrt{\Delta(|P|)}}
=\dfrac{\kappa |P|-b}{\kappa -1}.
\]

(b) The maximum of $h(x,P,b)$ on $\Gamma(P,b)$ is attained when 
\[
x\cdot P= \dfrac{b+\sqrt{(\kappa^{2}-1)(\kappa^{2}|P|^{2}-b^{2})}}{\kappa^{2}}, 
\]
that is, when $\Delta(x\cdot P)=0$ and the formula follows.

(c) We have
\begin{align*}
\min_{x\in \Gamma(P,b)}|P-h(x,P,b)x|&= \min_{x\in \Gamma(P,b)}
\left( \dfrac{b-h(x,P,b)}{\kappa}\right)
=
\dfrac{b-\max_{x\in \Gamma(P,b)}h(x,P,b)}{\kappa}\\
&=
\dfrac{b\sqrt{\kappa^{2}-1}-\sqrt{\kappa^{2}|P|^{2}-b^{2}}}{\kappa \sqrt{\kappa^{2}-1}}
\geq 
\dfrac{b\sqrt{\kappa^{2}-1}- |P|\sqrt{\kappa^{2}-1}}{\kappa \sqrt{\kappa^{2}-1}}.
\end{align*}
Furthermore, set $\rho(x)=h(x, P, b)$ and then $|P|+(\kappa-1)|P-\rho(x)x|\leq \rho(x)+\kappa |P-\rho(x)x|=b$.
It yields $|P-\rho(x)x|\leq \dfrac{b-|P|}{\kappa-1}$.

(d) It follows writing
\[
I(P,b)-\dfrac{1}{\kappa}
=
\dfrac{\sqrt{\kappa |P|-b}}{\kappa^{2}|P|}\cdot 
\dfrac{  (\kappa^{2}-2)\kappa |P| + \kappa^{2}b}{
\sqrt{(\kappa^{2}-1)(\kappa |P| +b)} + \sqrt{\kappa |P| -b}}
\]
and noticing that $(\kappa^{2}+\kappa -2)\kappa |P|
<(\kappa^{2}-2)\kappa |P| +\kappa^{2}b<2(\kappa^{2}-1)\kappa |P|$, since
$|P|<b<\kappa |P|$.
\end{proof}

\begin{remark}\rm
If $b\to (\kappa |P|)^{-}$, then, by Lemma \ref{lm:estimateofovalsk>1}(a),
$\mathcal O(P,b)$ approaches zero.
\end{remark}

\begin{remark}\rm
If $b\to |P|^{+}$, then $\mathcal O(P,b)$ shrinks to $P$. Because
$|P|+(\kappa -1)|P-X|\leq |X|+\kappa |P-X|=b$ for $X\in \mathcal O(P,b)$ and so
$|P-X|\leq \dfrac{b-|P|}{\kappa -1}$.
\end{remark}

\setcounter{equation}{0}
\section{Near field refractor problem for $\kappa<1$}\label{sec:definitionofweaksolution}

\subsection{Formulation of problem}
Let $\Omega\subset S^{n-1}$ be a domain with $|\partial \Omega|=0$
(measure in the sphere). Let $D\subset \R^{n}$ be a ``target'' domain that we want to illuminate and suppose it is contained in an $n-1$ dimensional hypersurface, and assume $\overline{D}$ is compact, and $0\notin \overline{D}$. 
Points in the sphere will be denoted with lower case letters and points in $\R^{n}$ by capitals.

We make the following assumptions on $\Omega$ and $D$:
\begin{enumerate}
\item \label{assumption1} 
There exists $\tau$ with $0<\tau < 1-\kappa$ such that $x\cdot P\geq (\kappa +\tau)|P|$ for all $x\in \overline{\Omega}$ and all $P\in \overline{D}$.
\item \label{assumption2} Let $0< r_{0}\leq \dfrac{\tau}{1+\kappa}\dist(0,\overline{D})$ and consider the cone in $\R^{n}$ 
\[Q_{r_{0}}=\{tx: x\in \overline{\Omega}, 0< t \leq r_{0}\}.\]
For each $m\in S^{n-1}$ and for each $X\in Q_{r_{0}}$ 
we assume that $\overline{D}\cap  \{X+tm: t\geq 0\}$ contains at most one point.
That is, for each $X\in Q_{r_{0}}$ each ray emanating from $X$ intersects $\overline{D}$ at most in one point.
\end{enumerate}

Given $P\in \R^{n}$ and $\kappa |P|<b<|P|$, keeping in mind \eqref{eq:polareqovalk<1} and \eqref{eq:xdotPbiggerthanb}, a refracting oval is the set 
\[
\mathcal O(P,b)=\{h(x,P,b)\,x: x\in S^{n-1}, x\cdot P\geq b\}
\]
where 
\[
h(x,P,b)=\dfrac{(b-\kappa^{2} x\cdot P)- 
\sqrt{(b- \kappa^{2}x\cdot P)^{2}- (1-\kappa^{2})(b^{2}-\kappa^{2}|P|^{2})}}{1-\kappa^{2}}.
\]
\begin{definition}\label{def:definitionofrefractorkappa<1}
Let $\mathcal S=\{x\rho(x): x\in \overline{\Omega}\}\subset Q_{r_{0}}$ be a surface.
We say that $\mathcal S$ is a near field refractor if for any point $y\rho(y)\in \mathcal S$ there exist $P\in \overline{D}$ and $b>0$ such that the refracting oval 
$\mathcal O(P,b)$ supports $\mathcal S$ at $y\rho(y)$, i.e. $\rho(x)\leq h(x,P,b)$ for all $x\in \overline{\Omega}$ with equality at $x=y$.

The near field refractor mapping associated with  $\mathcal S$ is defined by
\begin{equation}\label{eq:definitionofrefractormapping}
\mathcal R_{\mathcal S} (x)
=
\{P\in \overline{D}:  \text{there exists a supporting oval $\mathcal O(P,b)$ to $\mathcal S$ at $\rho(x)x$}
\}.
\end{equation}
\end{definition}

The definition implies that if $\mathcal O(P,b)$ is a supporting oval, then the openning of $\mathcal O(P,b)$ is wider than $\overline\Omega$,
i.e., $x\cdot P\geq b$ for all $x\in \overline{\Omega}$.

\begin{remark}\label{rmk:refractorissurjective}\rm
If $\mathcal S$ is a near field refractor, then $\mathcal R_{\mathcal S}(\overline{\Omega})=\overline{D}$.
Indeed, let $P\in \overline{D}$ and $b_{0}=(\kappa +\tau) |P|$.
Then from the left identity in \eqref{eq:lowerestimateofrho} and the assumption on $r_{0}$ in 
\ref{assumption2} we get that $r_{0}\leq \dfrac{b_{0}- \kappa |P|}{1+\kappa}\leq h(x,P,b_{0})$ for $x\in \overline{\Omega}$. Also from \ref{assumption1}, we have
$x\cdot P\geq b_{0}$. 
Hence $\mathcal S\subset Q_{r_{0}}$ enclosed by $\mathcal O(P,b_{0})$.
Let
\[
b_{1}=\inf \{b: \rho(x)\leq h(x,P,b),\, x\cdot P\geq b \quad \forall x\in \overline{\Omega}\}.
\]
Thus, the oval $\mathcal O(P,b_{1})$ supports $\mathcal S$ at some $y\in \overline{\Omega}$.

We point out that if $\mathcal S=\mathcal O(P,b)$, then $\overline D\setminus \{P\}\subset \mathcal R_{\mathcal S}(\partial\Omega)$.
\end{remark}

\begin{lemma}\label{rmk:uniformlipschitz}
If $\mathcal S$ is a near field refractor with defining function $\rho(x)$,
then $\rho$ is Lipschitz continuous in $\overline{\Omega}$ with a Lipschitz constant depending only on $\kappa$ ad $\tau$ in 
the assumptions \ref{assumption1} and \ref{assumption2} and $\max_{\overline D}|P|$.
\end{lemma}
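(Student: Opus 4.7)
The plan is to exploit the supporting-oval structure at every point of the refractor. At each $y\in\overline{\Omega}$ there is a pair $(P_y,b_y)$ such that $\rho(x)\leq h(x,P_y,b_y)$ for all $x\in\overline{\Omega}$ with equality at $x=y$. This immediately gives $\rho(x)-\rho(y)\leq h(x,P_y,b_y)-h(y,P_y,b_y)$, and using a supporting oval at $x$ gives the reverse inequality. It therefore suffices to establish a uniform Lipschitz estimate for $x\mapsto h(x,P,b)$ over all admissible supporting pairs $(P,b)$.

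The next step is to pin down the range of admissible $b$. Since $P\in\overline{D}$, one has $|P|\leq \max_{\overline{D}}|P|$. The hypothesis $\mathcal S\subset Q_{r_{0}}$ forces $\rho(y)\leq r_{0}$, while Lemma \ref{lm:minandmaxofovalskappa<1} gives $\rho(y)=h(y,P,b)\geq (b-\kappa|P|)/(1+\kappa)$. Hence
\[
\kappa|P|<b\leq \kappa|P|+(1+\kappa)\,r_{0},
\]
and Lemma \ref{lm:minandmaxofovalskappa<1} again yields
\[
h(x,P,b)\leq \frac{b-\kappa|P|}{1-\kappa}\leq \frac{(1+\kappa)\,r_{0}}{1-\kappa}
\]
uniformly over $x\in S^{n-1}$ and admissible $(P,b)$. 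Since $r_{0}\leq \tau\max_{\overline{D}}|P|/(1+\kappa)$, this bound on $h$ is controlled by $\kappa$, $\tau$, and $\max_{\overline{D}}|P|$ alone.

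The analytic heart is a gradient estimate for $h(\cdot,P,b)$. Parametrizing a unit-speed curve $x(t)$ on $S^{n-1}$, setting $u(t)=h(x(t),P,b)$ and $m(t)=(P-u(t)x(t))/|P-u(t)x(t)|$, implicit differentiation of the oval equation $u+\kappa|P-ux|=b$ yields
\[
u'(1-\kappa\, m\cdot x)=\kappa u\,(m\cdot x').
\]
Because $m,x\in S^{n-1}$ and $\kappa<1$, the factor $1-\kappa\,m\cdot x$ is bounded below by $1-\kappa>0$, so $|u'|\leq \kappa u/(1-\kappa)\leq \kappa(1+\kappa)r_{0}/(1-\kappa)^{2}$. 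Integrating along the shortest great-circle arc between any $x,y\in\overline{\Omega}$---legitimate because $\Delta(x\cdot P)>0$ on all of $S^{n-1}$, so $h(\cdot,P,b)$ is globally smooth on the sphere---yields the desired uniform Lipschitz bound for $h$, and hence for $\rho$ via the comparison from the first paragraph.

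The main obstacle is ensuring uniformity of the constants simultaneously across all supporting ovals, which is handled by the a priori bound on $b$ derived from the containment in $Q_{r_{0}}$. A secondary subtlety is that the great-circle arc used to integrate the gradient bound may exit $\overline{\Omega}$; this causes no trouble because $h(\cdot,P,b)$ is smooth on the entire sphere, and the Euclidean distance on $S^{n-1}$ is comparable to the geodesic distance up to a factor of $\pi/2$. The sharpness of the lower bound $1-\kappa\,m\cdot x\geq 1-\kappa$ (attained at $x=P/|P|$) shows that the argument is tight for $\kappa<1$ and that the genuine work enters only in the companion case $\kappa>1$, where the refracting directions must be bounded away from the critical cone.
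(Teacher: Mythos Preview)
Your proof is correct and takes a genuinely different route from the paper's. The paper works directly with the explicit formula \eqref{eq:polareqovalk<1}: it writes $h(x,P,b)-h(x_0,P,b)$ as a linear term plus a difference of square roots, and the latter forces a lower bound on $\sqrt{\Delta(x_0\cdot P)}$ obtained through a chain of inequalities (\eqref{eq:lowerestimateDelta}, \eqref{eq:maxofp-hforx.P>b}, and the containment $\mathcal S\subset Q_{r_0}$), culminating in \eqref{eq:lowerestimateosqrtDeltaneededinA4c}. You instead differentiate the implicit oval relation $u+\kappa|P-ux|=b$ along a geodesic, which yields $u'(1-\kappa\,m\cdot x)=\kappa u\,(m\cdot x')$ and hence $|u'|\le \kappa u/(1-\kappa)$ from the trivial bound $m\cdot x\le 1$. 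This is more elementary and makes transparent why $\kappa<1$ is essentially free while the companion Lemma~\ref{rmk:uniformlipschitzkappa>1} genuinely needs (H3) to keep $m\cdot x$ away from $1/\kappa$. The paper's approach, however, has the side benefit of producing the explicit lower bound \eqref{eq:lowerestimateosqrtDeltaneededinA4c} on $\sqrt{\Delta}$, which is reused later in the verification of (A3)(d); your argument would need a separate (equally easy) remark there.
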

\begin{proof}
Indeed, given $x_{0}\in \overline{\Omega}$, $\mathcal S$ has a supporting oval
$h(x,P,b)$ at $\rho(x_{0})x_{0}$ with $P\in \overline{D}$. Then
\begin{align*}
\rho(x)-\rho(x_{0})&\leq h(x,P,b)-h(x_{0},P,b)\\
&=
\dfrac{1}{1-\kappa^{2}}\left( (b-\kappa^{2}x\cdot P)-(b-\kappa^{2}x_{0}\cdot P)
+\sqrt{\Delta(x_{0}\cdot P)}-\sqrt{\Delta(x\cdot P)}\right)\\
&=
\dfrac{1}{1-\kappa^{2}}\left( I + II \right),
\end{align*}
where $\Delta$ is given by \eqref{eq:definitionofDelta}. 
We have $|I|\leq C(\kappa)|P|\cdot |x-x_{0}|$ and 
\begin{align*}
II
&=\dfrac{\Delta(x_{0}\cdot P) -\Delta (x\cdot P)}{\sqrt{\Delta(x_{0}\cdot P)}+\sqrt{\Delta(x \cdot P)}}.
\end{align*}
We estimate $\Delta(x_{0}\cdot P)$ from below.  Obviously,
the function $\Delta(t)$ has a minimum at $t=b/\kappa^{2}$, is increasing in $(b/\kappa^{2},+\infty)$ and decreasing in $(-\infty, b/\kappa^{2})$.
Since $\kappa<1$, we have $b/\kappa <b/\kappa^{2}$, and so
$\Delta$ decreases in the interval $[b,b/\kappa]$.
Since $|P|\in (b,b/\kappa)$ and $b\leq x\cdot P\leq |P|$, 
\[
\min_{[b,|P|]}\Delta (t)=\Delta(|P|)=\kappa^{2}\,(|P|-b)^{2},
\]
and therefore
\begin{equation}\label{eq:lowerestimateDelta}
\Delta(x\cdot P)\geq \kappa^{2}\,(|P|-b)^{2}, \qquad \text{for $x\cdot P\geq b$}.
\end{equation}
From \eqref{eq:maxofp-hforx.P>b}
\[
|P|-b\geq \dfrac{1-\kappa^{2}}{2|P|}\, |P-h(x,P,b)x|^{2},
\quad
\text{for all $x\cdot P\geq b$},
\]
which combined with \eqref{eq:lowerestimateDelta}
yields 
\begin{equation}\label{eq:estimatefrombelowofsqrtDelta}
\sqrt{\Delta (x\cdot P)}
\geq
\kappa \dfrac{1-\kappa^{2}}{2|P|}\, |P-h(x,P,b)x|^{2},
\text{for all $x\cdot P\geq b$}.
\end{equation}
Since $\rho(x_{0})=h(x_{0},P,b)$, and $\mathcal S\subset Q_{r_{0}}$, we get that
$h(x_{0},P,b)\leq r_{0}$.
From \ref{assumption2}, $|P|\geq r_{0}\dfrac{1+\kappa}{\tau}$. 
Therefore we obtain the estimate
\begin{align}\label{eq:lowerestimateosqrtDeltaneededinA4c}
\sqrt{\Delta (x_{0}\cdot P)}
&\geq
\kappa \dfrac{1-\kappa^{2}}{2|P|}\, |P-h(x_{0},P,b)x_{0}|^{2}
\geq
\kappa \dfrac{1-\kappa^{2}}{2|P|}\, \left( |P|-r_0\right)^{2}\\
&\geq
\dfrac{\kappa (1-\kappa)(1+\kappa -\tau)^{2}}{2(1+\kappa)}\,|P|.\notag
\end{align}
Clearly, 
$|\Delta(x_{0}\cdot P) -\Delta (x\cdot P)|\leq 
C(\kappa)|P|^2\,|x-x_{0}|$ which completes the proof of the lemma.
\end{proof}

\subsection{Application of the setup from Section \ref{sec:abstract setup} to the solution of the near field refractor problem} 
We apply the setup in that section with the spaces $X=\overline\Omega$, and $Y=\overline D$.
The Radon measure $\omega$ in $\overline\Omega$ there is now given by $\omega=fdx$ with 
$f\in L^1(\overline\Omega)$ nonnegative. 
If $\mathcal S$ is a near field refractor in the sense of Definition \ref{def:definitionofrefractorkappa<1}, then it is proved in 
Lemma \ref{measurabilityrefractormap} below that the map $\Phi=\mathcal R_{\mathcal S}\in C_s(\overline\Omega, \overline D)$.
From Lemma \ref{lm:radonmeasureontarget} we therefore obtain that
the set function
\begin{equation}\label{def:eqrefractormeasurekappa<1}
\mathcal M_{\mathcal S, f}(F):=\int_{\mathcal R_{\mathcal S}^{-1}(F)}f\,dx,
\end{equation}
is a Radon measure defined on $\overline D$. We call this measure 
{\it the near field refractor measure associated with $f$ and the refractor $\mathcal S$}.

Let $\mathcal S_\rho$ denote the near field refractor with defining radial function $\rho$ given by 
Definition \ref{def:definitionofrefractorkappa<1}.
We let $\mathcal F$ be the family of functions in $C^+(\overline\Omega)$ given by
$$\mathcal F=\{\rho(x): \mathcal S_\rho \text{ is a near field refractor}\}.$$ 
On $\mathcal F$ we define the mapping $\mathcal T$ by 
$$\mathcal T(\rho)=\mathcal R_{\mathcal S_\rho}.$$
To continue with the application of the results from Section \ref{sec:abstract setup}, 
we need to show also that $\mathcal T$ is continuous at each $\rho\in \mathcal F$ in the sense of Definition
\ref{def:definitionofmapTcontinuous}.
This is proved in Lemma \ref{lm:weakcompactness} below.

\begin{lemma}\label{measurabilityrefractormap}
For each near field refractor $\mathcal S$, we have $\mathcal R_{\mathcal S}\in C_s(\overline\Omega, \overline D)$.
\end{lemma}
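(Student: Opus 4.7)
The plan is to verify the three ingredients of $C_s(\overline\Omega,\overline D)$: that $\mathcal R_{\mathcal S}(x)\subset\overline D$ is nonempty for every $x$ and single-valued for $\omega$-a.e.\ $x$, that $\mathcal R_{\mathcal S}$ is continuous in the set-valued sense prescribed before the definition of $C(X,Y)$, and that $\mathcal R_{\mathcal S}(\overline\Omega)=\overline D$. Nonemptiness is built into Definition \ref{def:definitionofrefractorkappa<1}, and surjectivity is exactly Remark \ref{rmk:refractorissurjective}.

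For continuity, let $x_k\to x_0$ in $\overline\Omega$ and $y_k\in \mathcal R_{\mathcal S}(x_k)$. By compactness of $\overline D$ extract $y_{k_j}\to y_0\in\overline D$. Each $y_k$ carries a parameter $b_k$ with $\mathcal O(y_k,b_k)$ supporting $\mathcal S$ at $\rho(x_k)x_k$; from the oval equation \eqref{eq:oval} we have
\[
b_k=\rho(x_k)+\kappa\,|\rho(x_k)x_k-y_k|,
\]
so by Lipschitz continuity of $\rho$ (Lemma \ref{rmk:uniformlipschitz}) we get $b_{k_j}\to b_0:=\rho(x_0)+\kappa|\rho(x_0)x_0-y_0|$. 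Passing to the limit in $\rho(x)\leq h(x,y_{k_j},b_{k_j})$, using joint continuity of $h$ on the region $\kappa|P|<b<|P|$, gives $\rho(x)\leq h(x,y_0,b_0)$ on $\overline\Omega$ with equality at $x_0$, i.e.\ $y_0\in\mathcal R_{\mathcal S}(x_0)$, provided the limiting pair is admissible.

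The main obstacle is precisely checking that $\kappa|y_0|<b_0<|y_0|$, so that $\mathcal O(y_0,b_0)$ is a bona fide refracting oval. The lower bound is immediate from the reverse triangle inequality:
\[
b_0-\kappa|y_0|\geq \rho(x_0)+\kappa(|y_0|-\rho(x_0))-\kappa|y_0|=(1-\kappa)\rho(x_0)>0.
\]
For the upper bound, the confinement $\mathcal S\subset Q_{r_0}$ gives $\rho(x_0)\leq r_0$, and \ref{assumption2} combined with $|y_0|\geq \dist(0,\overline D)$ yields $\rho(x_0)\leq \frac{\tau}{1+\kappa}|y_0|$; then
\[
b_0\leq (1+\kappa)\rho(x_0)+\kappa|y_0|\leq \tau|y_0|+\kappa|y_0|=(\kappa+\tau)|y_0|<|y_0|,
\]
where the final strict inequality is exactly the hypothesis $\tau<1-\kappa$ of \ref{assumption1}. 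Thus $\mathcal O(y_0,b_0)$ is a valid refracting oval and $y_0\in\mathcal R_{\mathcal S}(x_0)$.

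Finally, for $\omega$-a.e.\ single-valuedness, Lemma \ref{rmk:uniformlipschitz} makes $\rho$ Lipschitz on $\overline\Omega$, hence differentiable $\omega$-a.e.\ by Rademacher's theorem since $\omega=f\,dx$ with $f\in L^1$. At any such differentiability point $x_0$, the tangent plane to $\mathcal S$ at $\rho(x_0)x_0$, and therefore the outer unit normal $\nu$, are uniquely determined. Snell's law \eqref{eq:snellvectorform} then pins down a unique refracted direction $m\in S^{n-1}$, so every $P\in\mathcal R_{\mathcal S}(x_0)$ must lie on the ray $\{\rho(x_0)x_0+tm:t\ge 0\}\cap\overline D$; assumption \ref{assumption2} allows at most one such point, so $\mathcal R_{\mathcal S}(x_0)$ is a singleton. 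The delicate step throughout is the upper bound $b_0<|y_0|$, which is where the stringent geometric condition $\tau<1-\kappa$ in \ref{assumption1} is essential; the remaining pieces are standard compactness and Rademacher arguments.
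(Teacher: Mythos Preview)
Your proof is correct and follows essentially the same approach as the paper's: surjectivity via Remark \ref{rmk:refractorissurjective}, continuity by extracting subsequences and passing to the limit in the supporting-oval inequality, and almost-everywhere single-valuedness via Rademacher's theorem combined with Snell's law and assumption \ref{assumption2}. Your explicit verification that the limiting pair satisfies $\kappa|y_0|<b_0<|y_0|$ (using the triangle inequality and the bound $r_0\le\frac{\tau}{1+\kappa}|y_0|$) is a nice detail that the paper handles more implicitly through its uniform two-sided bounds on the $b_i$.
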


\begin{proof}
By Remark \ref{rmk:refractorissurjective}, $\mathcal R_{\mathcal S}$ is surjective.  
Now show that $\mathcal R_{\mathcal S}(x)$ is single-valued for a.e. $x$ with respect to $\omega$.
If $\mathcal R_{\mathcal S}(x)$ contains more than one point, then 
$\mathcal S$ parameterized by $\rho$ has two distinct supporting
ovals  $\mathcal O(P_{1},b_{1})$ and $\mathcal O(P_{2},b_{2})$ at $\rho(x)x$ with $P_{1}\neq P_{2}$.
We claim that $\rho(x)x$ is a singular point of $\mathcal S$.
Otherwise, if $\mathcal S$ has tangent hyperplane ${\Pi}$ at $\rho(x)x$,
then $\Pi$ must coincide both with the tangent hyperplane of
$\mathcal O(P_{1},b_{1})$ and that of $\mathcal O(P_{2},b_{2})$ at $\rho(x)x$.
From the Snell law we get that 
$\dfrac{P_{1}-\rho(x)x}{|P_{1}-\rho(x)x|}=\dfrac{P_{2}-\rho(x)x}{|P_{2}-\rho(x)x|}:=m$, and so the ray through $X=\rho(x)x$ with direction $m$ contains $P_{1}, P_{2}$ and therefore $P_{1}=P_{2}$ from assumption \ref{assumption2}, a contradiction.
Since the graph of $\mathcal S$ is Lipschitz and $|\partial \Omega|=0$, the set of singular points of $\mathcal S$ has measure zero and therefore 
$\mathcal R_{\mathcal S}(x)$ is single-valued for a.e. $x\in \overline\Omega$.

To prove that $\mathcal R_{\mathcal S}$ is continuous, 
let $x_i\longrightarrow x_0$ and 
$P_i\in \mathcal R_{\mathcal S}(x_i)$.
Let $\mathcal O(P_{i},b_{i})$ be a supporting oval to $\mathcal S$
at $\rho(x_i)x_i$. Then
\begin{equation}\label{eq:inequalityofO_{i}}
\rho(x) +\kappa |P_{i}-\rho(x)x|\leq b_{i} \qquad \text{ for }x\in \overline\Omega,
\end{equation}
with equality at $x=x_i$ and $x\cdot P_{i}\geq b_{i}$ for all $x\in \overline{\Omega}$. Assume that $a_1\le \rho(x) \le r_0$
on $\overline\Omega$ for some constant $a_1>0$. 
From \eqref{eq:lowerestimateofrho} and \ref{assumption2} we get
$a_{1}(1-\kappa)+ \kappa |P_{i}|\leq b_{i}\leq \kappa |P_{i}| + r_0(1+\kappa)\leq (\kappa +\tau)|P_i|$.
Therefore selecting a subsequence we can assume that 
$P_i\longrightarrow P_0\in \overline D$ and $b_i\longrightarrow b_0$,
as $i \longrightarrow \infty$.
By taking limit in \eqref{eq:inequalityofO_{i}},
one obtains that the oval $\mathcal O(P_{0},b_{0})$ supports
$\mathcal S$ at $\rho(x_0)x_0$, $x\cdot P_{0}\geq b_{0}$, and
$P_0\in \mathcal R_{\mathcal S}(x_0)$.
\end{proof}

\begin{lemma}\label{lm:weakcompactness}
The refractor mapping $\mathcal T(\rho)=\mathcal R_{\mathcal S_\rho}$ is continuous at each $\rho\in \mathcal F$.
\end{lemma}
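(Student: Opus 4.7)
The plan is to mimic the continuity argument given at the end of the proof of Lemma \ref{measurabilityrefractormap}, but with varying refractors $\mathcal S_{\rho_j}$ instead of a fixed one. Suppose $\rho_j \to \rho$ uniformly on $\overline\Omega$ with $\rho_j \in \mathcal F$, fix $x_0 \in \overline\Omega$, and pick $P_j \in \mathcal T(\rho_j)(x_0) = \mathcal R_{\mathcal S_{\rho_j}}(x_0)$. I must produce a subsequence $P_{j_\ell} \to P_0$ with $P_0 \in \mathcal R_{\mathcal S_\rho}(x_0)$.

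First I would use compactness of $\overline D$ to extract a subsequence (still denoted $P_j$) with $P_j \to P_0 \in \overline D$. For each $j$ there is a parameter $b_j$ such that $\mathcal O(P_j, b_j)$ supports $\mathcal S_{\rho_j}$ at $\rho_j(x_0)x_0$. The contact condition reads
\[
b_j = \rho_j(x_0) + \kappa\,|P_j - \rho_j(x_0)\,x_0|,
\]
so uniform convergence $\rho_j \to \rho$ (in particular $\rho_j(x_0)\to\rho(x_0)>0$) together with $P_j\to P_0$ forces $b_j\to b_0:=\rho(x_0)+\kappa\,|P_0-\rho(x_0)x_0|$.

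Next I would check that $\mathcal O(P_0,b_0)$ is a genuine refracting oval, i.e., $\kappa|P_0|<b_0<|P_0|$. The lower bound follows from the triangle inequality $|P_0-\rho(x_0)x_0|\ge |P_0|-\rho(x_0)$ (valid since $\rho(x_0)\le r_0<|P_0|$ by the choice of $r_0$ in \ref{assumption2}), which gives $b_0\ge (1-\kappa)\rho(x_0)+\kappa|P_0|>\kappa|P_0|$. For the upper bound, passing to the limit in the opening condition $x_0\cdot P_j\ge b_j$ (Remark \ref{rmk:refractorissurjective}, noted also in the paragraph after Definition \ref{def:definitionofrefractorkappa<1}) yields $b_0\le x_0\cdot P_0\le|P_0|$; if equality held throughout, then $x_0=P_0/|P_0|$ and a direct computation gives $b_0=\kappa|P_0|+(1-\kappa)\rho(x_0)<|P_0|$ (using $\rho(x_0)<|P_0|$), a contradiction.

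Finally I would pass to the limit in the two conditions defining the support of $\mathcal O(P_j,b_j)$: the pointwise inequality $\rho_j(x)+\kappa\,|P_j-\rho_j(x)x|\le b_j$ for all $x\in\overline\Omega$ (equivalent to $\rho_j(x)\le h(x,P_j,b_j)$) with equality at $x_0$, and the opening condition $x\cdot P_j\ge b_j$. By uniform convergence these yield
\[
\rho(x)+\kappa\,|P_0-\rho(x)x|\le b_0 \quad\text{and}\quad x\cdot P_0\ge b_0 \quad\text{for all }x\in\overline\Omega,
\]
with equality in the first inequality at $x=x_0$. Hence $\mathcal O(P_0,b_0)$ supports $\mathcal S_\rho$ at $\rho(x_0)x_0$ and $P_0\in\mathcal R_{\mathcal S_\rho}(x_0)=\mathcal T(\rho)(x_0)$, proving continuity.

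The main obstacle is the verification of the strict inequalities $\kappa|P_0|<b_0<|P_0|$ in the limit, ensuring the limiting oval does not degenerate. This is exactly where assumptions \ref{assumption1}–\ref{assumption2}, the positivity $\rho(x_0)>0$ inherited from $\mathcal F\subset C^+(\overline\Omega)$, and the separation between $Q_{r_0}$ and $\overline D$ must be used; the remaining passage to the limit is then purely mechanical from the uniform convergence.
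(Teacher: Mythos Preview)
Your proof is correct and follows essentially the same approach as the paper's: both mimic the continuity argument from Lemma~\ref{measurabilityrefractormap}, extracting convergent subsequences of $(P_j,b_j)$ and passing to the limit in the supporting-oval inequalities. The only cosmetic difference is that the paper bounds $b_j$ via the min/max estimates of Lemma~\ref{lm:minandmaxofovalskappa<1} (obtaining $\kappa|P_j|+a(1-\kappa)\le b_j\le (\kappa+\tau)|P_j|$, which immediately gives strict nondegeneracy in the limit), whereas you compute $b_j$ explicitly from the contact condition and then verify $\kappa|P_0|<b_0<|P_0|$ by a short case analysis; both routes are valid.
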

\begin{proof}
Suppose $\rho_j\longrightarrow \rho$ uniformly as $j\to \infty$. Let $x_0\in\overline\Omega$ and $P_j\in \mathcal R_{\mathcal S_{\rho_j}}(x_0)$. 
Then there exists $b_j$ such that 
$\rho_{j}(x)\leq h(x, P_{j},b_{j})$ for all $x\in \overline{\Omega}$ with equality 
at $x=x_{0}$ and with $x\cdot P_{j}\geq b_{j}$. As in the proof of Lemma  \ref{measurabilityrefractormap}, $\kappa |P_{j}|+ a (1-\kappa)\leq b_{j} \leq (\kappa + \tau) |P_{j}|$for some $a>0$, so
there exists a subsequence $P_{j_k}\longrightarrow P_0$ and $P_0\in \mathcal R_{\mathcal S_\rho}(x_0)$. 
\end{proof}

We therefore can apply Lemma \ref{lm:continuityofmeasuresabstractapproach} to obtain that the 
definition of refractor measure given in \eqref{def:eqrefractormeasurekappa<1} is stable by uniform limits,
i.e., if $\rho_j\to \rho$ uniformly, then $\mathcal M_{\mathcal S_{\rho_j}, f}\to \mathcal M_{\mathcal S_{\rho}, f}$ weakly.

To be able to apply Theorem \ref{thm:abstractcasediscritecase}, we next need to verify that the family $\mathcal F$ and the map $\mathcal T$ satisfy conditions (A1)-(A3) from Subsection 
\ref{subsect:generalconcavecase}.
Indeed, (A1) follows immediately from the definition of refractor.
Condition (A2) immediately follows from the definition of refractor.

It remains to verify (A3).
For that we use the estimates for ovals proved in Section \ref{sec:ovals}.
Indeed, with the notation in condition (A3) we will take 
\[
h_{t,y_0}(x)=h(x,P,b)
\]
with the understanding that $t=b$, and $y_0=P$, and 
$h(x,P,b)$ is the oval defined by \eqref{eq:polareqovalk<1}.
In other words, we will show that the family 
$$\left\{h(\cdot, P,b): \kappa |P|<b<\kappa |P| +(1-\kappa)r_0 \right\} \subset \mathcal F,$$
and verifies (A3), with $r_0$ from \ref{assumption2}.
Indeed, to show the inclusion, if $b<\kappa |P| +(1-\kappa)r_0 $,
then from \ref{assumption2} and \ref{assumption1} we have
$b<\kappa |P|+\dfrac{1-\kappa}{1+\kappa}\tau |P|\leq (\kappa+\tau)|P|\leq x\cdot P$ for all $x\in \Omega$.
So the oval $h(x,P,b)x$ refracts in $\Omega$ and in particular $P\in \mathcal T(h(\cdot, P,b))(x)$ for all $x\in \bar \Omega$, that is, (A3)(a) holds. 
%
Condition (A3)(b) is trivial. Condition (A3)(c) follows from the second identity in \eqref{eq:lowerestimateofrho}.
To verify (A3)(d), we notice that since $\Delta(x\cdot P)$ has a lower bound given in \eqref{eq:lowerestimateosqrtDeltaneededinA4c},
we obtain that 
$|h(x,P,b')-h(x,P,b)|\leq C\,|b'-b|$, with $C$ depending only on the constants in 
\ref{assumption1} and \ref{assumption2}.

%

The notion of weak solution is introduced through conservation
of energy.

\begin{definition}
A near field refractor $\mathcal{S}$ is a weak solution of the near field refractor problem for the case $\kappa<1$ with emitting illumination intensity
$f(x)$ on $\overline\Omega$ and prescribed refracted illumination
intensity $\mu$ on $\overline{D}$ if for any Borel set
$F\subset \overline{D}$
\begin{equation}
\mathcal M_{\mathcal S, f}(F)
=\int_{\mathcal R_{\mathcal S}^{-1}(F)}f\,dx=\mu(F).
\end{equation}
\end{definition}

We are now ready to apply Theorem \ref{thm:abstractcasediscritecase} to solve the near field refractor problem when the measure $\mu$ is a linear combination of deltas.


\subsubsection{Existence for sum of Dirac measures}

\begin{theorem}\label{thm:existencesumofdiracmasses}
Suppose \ref{assumption1} and \ref{assumption2} hold.
Let $P_{1},\cdots , P_{N}$ be distinct points in $\overline{D}$,  
$g_{1},\cdots , g_{N}$ are positive numbers, and $f\in L^{1}(\Omega)$ with $f>0$ a.e. in $\Omega$ such that 
\begin{equation}\label{eq:conservationofenergykappa<1}
\int_{\overline{\Omega}} f(x)\,dx=\sum_{i=1}^{N} g_{i}.
\end{equation}
Then, for each $b_{1}$ with $\kappa |P_{1}|< b_{1} < \kappa |P_{1}| + r_{0}
\dfrac{(1-\kappa)^2}{1+\kappa}$, there exists a unique $(b_{2},\cdots , b_{N})$ such that the poly-oval 
$
\mathcal S= \{\rho(x)x: x\in \overline{\Omega}\}
$ with
\begin{equation}\label{eq:definitionofpolyoval}
\rho(x)=\min_{1\leq i \leq N} h(x,P_{i},b_{i})
\end{equation}
is a weak solution to the near field refractor problem.
Moreover, $\mathcal M_{\mathcal S, f}\left(\{P_{i}\}\right)=g_{i}$ for $1\leq i \leq N$.
\end{theorem}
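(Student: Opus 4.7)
The plan is to apply Theorem \ref{thm:abstractcasediscritecase} with $X=\overline\Omega$, $Y=\overline D$, $\omega=f\,dx$, the class $\mathcal F=\{\rho:\mathcal S_\rho\text{ is a near field refractor}\}$, the map $\mathcal T(\rho)=\mathcal R_{\mathcal S_\rho}$, and the building blocks $h_{b,P}(x)=h(x,P,b)$ with parameter interval $\alpha_P=\kappa|P|$, $\beta_P=\kappa|P|+(1-\kappa)r_0$. The paper has already done the structural bookkeeping: (A1) and (A2) are immediate from the definition of refractor, continuity of $\mathcal T$ is Lemma \ref{lm:weakcompactness}, and (A3)(a)--(d) have been verified for the Descartes ovals using the estimates from Section \ref{sec:ovals}. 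The energy identity \eqref{eq:conservationofenergyomega} is exactly \eqref{eq:conservationofenergykappa<1}.

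Next I would exhibit the seed function $\rho_0$ required by Theorem \ref{thm:abstractcasediscritecase}, following the recipe of Remark \ref{rmk:existenceofrho0forb1close}. Given $b_1$ in the prescribed range $\kappa|P_1|<b_1<\kappa|P_1|+r_0(1-\kappa)^2/(1+\kappa)$, the right identity in \eqref{eq:lowerestimateofrho} yields
\[
\max_{\overline\Omega}h(\cdot,P_1,b_1)=\dfrac{b_1-\kappa|P_1|}{1-\kappa}<\dfrac{(1-\kappa)r_0}{1+\kappa}.
\]
For $i\ge 2$, the left identity in \eqref{eq:lowerestimateofrho} lets me pick $b_i^0\in(\kappa|P_i|,\kappa|P_i|+(1-\kappa)r_0)$ with $\min_{\overline\Omega}h(\cdot,P_i,b_i^0)=(b_i^0-\kappa|P_i|)/(1+\kappa)$ strictly greater than the displayed max above; the numerical constraint on $b_1$ is precisely what makes such a choice possible inside the open parameter interval. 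Then $\rho_0:=\min_{1\le i\le N}h(\cdot,P_i,b_i^0)\equiv h(\cdot,P_1,b_1)$, so by Remark \ref{rmk:Mofhequalsdelta} $\mathcal M_{\mathcal T(\rho_0)}=\omega(\overline\Omega)\,\delta_{P_1}$ and in particular $\mathcal M_{\mathcal T(\rho_0)}(P_i)=0\le g_i$ for $i\ge 2$.

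With the hypotheses of Theorem \ref{thm:abstractcasediscritecase} satisfied, that theorem produces $(b_2,\ldots,b_N)$ with $\alpha_{P_i}<b_i\le b_i^0$ such that the poly-oval $\rho(x)=\min_{1\le i\le N}h(x,P_i,b_i)$ satisfies $M_{\mathcal T(\rho)}=\sum_{i=1}^N g_i\,\delta_{P_i}$. Unwinding the definition \eqref{def:eqrefractormeasurekappa<1}, this is exactly the weak-solution identity $\mathcal M_{\mathcal S,f}(F)=\mu(F)$ for every Borel $F\subset\overline D$, with $\mathcal M_{\mathcal S,f}(\{P_i\})=g_i$.

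For uniqueness of $(b_2,\ldots,b_N)$ given $b_1$, I would invoke Theorem \ref{lm:partialcomparison}(a): condition (A3)(b) holds in the strict form $h(\cdot,P,b)<h(\cdot,P,b')$ for $b<b'$ (evident from \eqref{eq:polareqovalk<1}), $\overline\Omega$ is connected and $\omega=f\,dx$ charges every open subset since $f>0$ a.e., so any two solutions $\rho,\rho^*$ sharing $b_1^*=b_1$ must have $b_i^*=b_i$ for all $i$. The only nontrivial check is confirming that the chosen $b_i^0$ lie inside the admissible interval $(\kappa|P_i|,\kappa|P_i|+(1-\kappa)r_0)$, which is precisely the role of the explicit upper bound on $b_1$; the main delicacy is matching the oval estimates of Section \ref{sec:ovals} against the geometric constraints \ref{assumption1}--\ref{assumption2} so that the seed $\rho_0$ actually refracts into $\overline D$ and lies in $Q_{r_0}$.
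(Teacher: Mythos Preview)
Your proposal is correct and follows essentially the same route as the paper: both apply Theorem~\ref{thm:abstractcasediscritecase} after exhibiting a seed $\rho_0$ with $\rho_0\equiv h(\cdot,P_1,b_1)$ by using the min/max estimates \eqref{eq:lowerestimateofrho} to choose $b_i^0$ so that $\max_{\overline\Omega}h(\cdot,P_1,b_1)\le \min_{\overline\Omega}h(\cdot,P_i,b_i^0)$, and both obtain uniqueness from Theorem~\ref{lm:partialcomparison}. The paper makes the explicit choice $b_i^0=\kappa|P_i|+(r_0-\sigma)(1-\kappa)$ where $b_1=\kappa|P_1|+(r_0-\sigma)(1-\kappa)^2/(1+\kappa)$, but your description of how to select $b_i^0$ amounts to the same computation.
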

\begin{proof}
To prove the theorem, we apply Theorem \ref{thm:abstractcasediscritecase}. So we only need to verify that there exists 
$\rho_0(x)=\min_{1\leq i \leq N} h(x,P_{i},b^0_{i})$ satisfying
$\mathcal M_{\mathcal S_{\rho_0},f}(P_i)\le g_i$ for $2\leq i\leq N$.

Rewrite $b_1=\kappa |P_{1}| + (r_{0}-\sigma)\dfrac{(1-\kappa)^2}{1+\kappa}$ for some
$\sigma>0$.
Let $b_1^0=b_1$, and $b_{i}^0=\kappa |P_i|+(r_0-\sigma)(1-\kappa)$ for $2\leq i \leq N$. Then 
$\rho_0(x)=h(x,P_{1},b_{1}^0)$, because $h(x,P_{1},b_{1}^0)\leq \dfrac{b_{1}^0-\kappa |P_{1}|}{1-\kappa}
\leq \dfrac{(r_0-\sigma)(1-\kappa)}{1+\kappa} = \dfrac{b_{i}^0-\kappa |P_{i}|}{1+\kappa}\leq 
h(x,P_{i},b_{i}^0)$ for $2\leq i \leq N$, from \ref{assumption2} and \eqref{eq:lowerestimateofrho}.
Hence $\mathcal M_{\mathcal S_{\rho_0}, f}(\{P_{i}\})=0$ for $i\neq 1$.
The uniqueness follows from Theorem \ref{lm:partialcomparison}.
\end{proof}

\subsubsection{Existence in the general case}
\begin{theorem}\label{thm:existencegeneralcasekappa<1}
Assume conditions \ref{assumption1} and \ref{assumption2}.
Let $\mu$ be a Radon measure on $\overline{D}$, $f\in L^{1}(\Omega)$ with 
$f>0$ a.e., and satisfying the energy conservation condition
\[
\int_{\Omega} f(x)\,dx=\mu(\overline{D}).
\]

Then given $X_{0}\in Q_{r_{0}}$ with $0<|X_{0}|< \left( \dfrac{1-\kappa}{1+\kappa}\right)^{3}r_{0}$, there exists a weak solution of the near field refractor problem passing through $X_{0}$.
\end{theorem}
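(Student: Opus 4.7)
The plan is to approximate $\mu$ weakly by discrete Radon measures, to solve the corresponding discrete problems via Theorem \ref{thm:existencesumofdiracmasses} while arranging that each approximating refractor passes through the prescribed point $X_0$, and then to pass to a uniform limit using the Lipschitz estimate of Lemma \ref{rmk:uniformlipschitz} together with the weak continuity of the refractor measure provided by Lemma \ref{lm:continuityofmeasuresabstractapproach}.

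Write $X_0=\rho_0\, x_0$ with $\rho_0=|X_0|$ and $x_0\in\overline{\Omega}$. First choose a sequence of discrete Radon measures $\mu_l=\sum_{i=1}^{N_l}g_i^l\delta_{P_i^l}$ on $\overline D$ with $\mu_l\to\mu$ weakly and $\mu_l(\overline D)=\int_\Omega f\,dx$ for every $l$; this is the standard partition-and-concentrate construction, using that $\overline D$ is compact. Distinguish one of the points of the discretization and call it $P_1^l$. For every $b_1$ in the admissible interval
\[
\kappa|P_1^l|<b_1<\kappa|P_1^l|+r_0\,\frac{(1-\kappa)^2}{1+\kappa},
\]
Theorem \ref{thm:existencesumofdiracmasses} yields a unique poly-oval $\rho_{l,b_1}(x)=\min_{1\le i\le N_l}h(x,P_i^l,b_i(b_1))$ whose refractor measure equals $\mu_l$.

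Next, select $b_1=b_1^l$ so that $\rho_l:=\rho_{l,b_1^l}$ satisfies $\rho_l(x_0)=\rho_0$. The map $b_1\mapsto \rho_{l,b_1}(x_0)$ is non-decreasing in $b_1$ by Theorem \ref{lm:partialcomparison}, and continuous by uniqueness in Theorem \ref{thm:existencesumofdiracmasses} together with the uniform Lipschitz bound of Lemma \ref{rmk:uniformlipschitz}. As $b_1\downarrow\kappa|P_1^l|$, the upper estimate in \eqref{eq:lowerestimateofrho} gives $h(\cdot,P_1^l,b_1)\to 0$ uniformly, hence $\rho_{l,b_1}(x_0)\to 0$. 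The hypothesis $\rho_0<\bigl((1-\kappa)/(1+\kappa)\bigr)^3 r_0$ guarantees that $b_1^{\dagger}:=\kappa|P_1^l|+(1+\kappa)\rho_0$ lies strictly inside the admissible interval, and at this value the lower estimate in \eqref{eq:lowerestimateofrho} gives $h(\cdot,P_1^l,b_1^{\dagger})\ge\rho_0$ on all of $\overline\Omega$; by the monotonicity of $b_i(b_1)$ in Theorem \ref{lm:partialcomparison} the same kind of lower bound propagates to the remaining ovals $h(\cdot,P_i^l,b_i(b_1))$ when $b_1$ is pushed slightly above $b_1^{\dagger}$, so $\rho_{l,b_1}(x_0)$ exceeds $\rho_0$ there. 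The intermediate value theorem then supplies $b_1^l$ with $\rho_l(x_0)=\rho_0$.

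Finally, Lemma \ref{rmk:uniformlipschitz} shows that $\{\rho_l\}$ is uniformly Lipschitz on $\overline\Omega$, and together with $\rho_l(x_0)=\rho_0$ and $\rho_l\le r_0$ this gives uniform two-sided bounds, so Arzel\`a--Ascoli extracts a subsequence with $\rho_l\to\rho$ uniformly. A cluster-point argument as in Lemma \ref{lm:weakcompactness} shows that a limit of supporting ovals $\mathcal O(P_l,b_l)$ to $\rho_l$ at a point yields a supporting oval to $\rho$ there, hence $\rho$ is a near field refractor; evidently $\rho(x_0)=\rho_0$, so $\mathcal S=\{\rho(x)x\}$ passes through $X_0$. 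By Lemma \ref{lm:continuityofmeasuresabstractapproach}, applied through Lemmas \ref{measurabilityrefractormap} and \ref{lm:weakcompactness}, $\mathcal M_{\mathcal S_{\rho_l},f}\to \mathcal M_{\mathcal S_\rho,f}$ weakly, and since $\mathcal M_{\mathcal S_{\rho_l},f}=\mu_l\to\mu$ we conclude $\mathcal M_{\mathcal S_\rho,f}=\mu$. The main obstacle is the middle step: showing that the free parameter $b_1$ can drive $\rho_{l,b_1}(x_0)$ all the way up past $\rho_0$. A bound on the single oval $h(\cdot,P_1^l,b_1)$ does not directly control the poly-oval minimum, and the cubic smallness of $|X_0|$ with respect to $r_0$ is precisely the quantitative room that lets the lower estimate of \eqref{eq:lowerestimateofrho} be propagated, via the monotonicity in Theorem \ref{lm:partialcomparison}, to every oval entering the $\min$.
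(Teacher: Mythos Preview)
Your overall strategy---discretize $\mu$, solve the discrete problems with a free parameter $b_1$, choose $b_1$ by the intermediate value theorem so the solution passes through $X_0$, then pass to a uniform limit---is exactly the paper's. The gap is in the middle step, and it is a genuine one.

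At $b_1^{\dagger}=\kappa|P_1^l|+(1+\kappa)\rho_0$ you correctly get $h(\cdot,P_1^l,b_1^{\dagger})\ge\rho_0$, but this says nothing about the other ovals in the minimum. Your appeal to ``the monotonicity of $b_i(b_1)$ in Theorem \ref{lm:partialcomparison}'' does not do the job: monotonicity only tells you $b_i(b_1)$ increases with $b_1$, not that $b_i(b_1^{\dagger})-\kappa|P_i^l|$ is large enough to force $h(\cdot,P_i^l,b_i)\ge\rho_0$. The paper's mechanism is different and does not use monotonicity at this point. One takes $b_1$ near the \emph{top} of the admissible interval, say $t_1=\kappa|P_1|+(r_0-\sigma)\dfrac{(1-\kappa)^2}{1+\kappa}$, and uses that $\mathcal M_{\mathcal S,f}(\{P_1\})=g_1>0$ forces the poly-oval to \emph{touch} $h(\cdot,P_1,t_1)$ at some $x_1$. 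At that touching point $\rho_l(x_1)=h(x_1,P_1,t_1)\ge\dfrac{t_1-\kappa|P_1|}{1+\kappa}=\Bigl(\dfrac{1-\kappa}{1+\kappa}\Bigr)^2(r_0-\sigma)$, and since $\rho_l(x_1)\le h(x_1,P_i,b_i)\le\dfrac{b_i-\kappa|P_i|}{1-\kappa}$ for every $i$, one extracts a lower bound on each $b_i-\kappa|P_i|$; a second application of \eqref{eq:lowerestimateofrho} then gives $h(x,P_i,b_i)\ge\Bigl(\dfrac{1-\kappa}{1+\kappa}\Bigr)^3(r_0-\sigma)$ for all $x$ and all $i$. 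This chain---touching point, then two uses of the oval oscillation ratio $\dfrac{1-\kappa}{1+\kappa}$---is precisely where the cubic exponent comes from, and it is absent from your argument.

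A smaller point: ``$\rho_l(x_0)=\rho_0$ together with the Lipschitz bound'' does not by itself give a uniform positive lower bound for $\rho_l$ on $\overline\Omega$. The lower bound again comes from the oval oscillation: if $h(\cdot,P,b)$ supports $\rho_l$ at $x$, then $h(x_0,P,b)\ge\rho_l(x_0)=\rho_0$, and \eqref{eq:lowerestimateofrho} yields $\rho_l(x)=h(x,P,b)\ge\dfrac{1-\kappa}{1+\kappa}\,\rho_0$. The paper packages this step into the abstract Theorem \ref{thm:uniquenessabstractcase}.
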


\begin{proof}
We assume first that $\mu=\sum_{i=1}^{N} g_{i}\delta_{P_{i}}$, with $g_{i}>0$ and 
$P_{i}$ distinct points in $\overline D$.
From Theorem \ref{thm:existencesumofdiracmasses},  
given $b_{1}\in \left(\kappa |P_{1}|, \kappa |P_{1}| + r_0\dfrac{(1-\kappa)^2}{1+\kappa}\right)$ there exists a unique $(b_{2},\cdots , b_{N})$ such that 
$\mathcal S$, defined by the radial function $\rho(x,b_{1})=\min_{i}h(x,P_{i},b_{i})$, 
is a weak solution to the near field refractor problem. By the comparison Theorem \ref{lm:partialcomparison},  the function $\rho(x,b_{1})$ is increasing  in $b_1$ and continuous 
for $(x,b_{1})\in \overline{\Omega} \times \left(\kappa |P_{1}|, \kappa |P_{1}| + r_0\dfrac{(1-\kappa)^2}{1+\kappa}\right)$.   Let $t_1=\kappa |P_{1}| + (r_0-\sigma)\dfrac{(1-\kappa)^2}{1+\kappa}$ for
$0<\sigma<r_0$.   We shall first prove that 
\begin{equation}\label{eq:estimateofrho}
\rho(x,t_1)\geq \left( \dfrac{1-\kappa}{1+\kappa}\right)^{3}(r_{0}-\sigma),\quad
\forall x\in \overline{\Omega}.
\end{equation}  
We have $\rho(x,t_1)=\min_{1\leq i\leq N} h(x,P_{i},b_{i})$ with $b_{1}=t_1$ and some $b_{2},\cdots, b_{N}$.
From \eqref{eq:lowerestimateofrho} 
\begin{equation}\label{eq:lowerestimateofh}
\dfrac{b_{i}-\kappa |P_{i}|}{1-\kappa}
\geq h(x,P_{i},b_{i})\geq \rho(x,t_1),\quad i=1,\cdots ,N;\quad \forall x\in \overline{\Omega}.
\end{equation}
Also there exists $x_{1}\in \overline{\Omega}$ such that 
$\rho(x_{1},t_1)=h(x_{1},P_{1}, t_1)$ and then again by \eqref{eq:lowerestimateofrho},
$\rho(x_{1},t_1)\geq \left(\dfrac{1-\kappa}{1+\kappa}\right)^2(r_{0}-\sigma)$.
Hence from \eqref{eq:lowerestimateofh} we get
\begin{equation}\label{eq:estimateofr0}
\dfrac{b_{i}-\kappa |P_{i}|}{1-\kappa}
\geq \left(\dfrac{1-\kappa}{1+\kappa}\right)^2(r_{0}-\sigma),\quad i=1,\cdots ,N.
\end{equation}
Once again by  \eqref{eq:lowerestimateofrho},
$h(x,P_{i},b_{i})\geq \dfrac{b_{i}-\kappa |P_{i}|}{1+\kappa}$,
which combined with \eqref{eq:estimateofr0} yields
\[
h(x,P_{i},b_{i})\geq \left( \dfrac{1-\kappa}{1+\kappa}\right)^{3}(r_{0}-\sigma),
\quad i=1,\cdots ,N,
\]
and hence \eqref{eq:estimateofrho} follows.
On the other hand, $\rho(x,b_{1})\leq h(x,P_{1},b_{1})\leq \dfrac{b_{1}-\kappa |P_{1}|}{1-\kappa}$ by \eqref{eq:lowerestimateofrho} for all $(x,b_{1})\in 
\overline{\Omega} \times \left(\kappa |P_{1}|, \kappa |P_{1}| +  r_0\dfrac{(1-\kappa)^2}{1+\kappa}\right)$
 and hence given $\delta>0$, we get $\rho(x,b_{1})<\delta$ for all $x\in \overline{\Omega}$
as long as $b_{1}$ is sufficiently close to $\kappa |P_{1}|$.
Suppose now that $X_{0}\in Q_{r_{0}}$ with $0<|X_{0}|< \left( \dfrac{1-\kappa}{1+\kappa}\right)^{3}r_{0}$ and with $x_{0}=\dfrac{X_{0}}{|X_{0}|}\in \overline{\Omega}$.
Hence from \eqref{eq:estimateofrho} and the continuity
of $\rho(x_{0},\cdot) $, we obtain that there exists $b_{1}\in \left(\kappa |P_{1}|, \kappa |P_{1}| + 
 r_0\dfrac{(1-\kappa)^2}{1+\kappa}\right)$ such that $\rho(x_{0}, b_{1})=|X_{0}|$. 

For the general case of a Radon measure $\mu$ in $\overline D$, 
we choose a sequence of measures $\mu_{\ell}$ such that each one is a finite combination of Dirac measures and $\mu_{\ell}\to \mu$ weakly with
$\mu_{\ell}(\overline{D})=\mu(\overline{D})$.
From the above, let $\mathcal S_{\ell}$ be the near field refractor corresponding to the measure $\mu_{\ell}$ and parameterized by
$\rho_{\ell}(x)x$ and passing through the point $X_{0}$.
Thus, $x_{0}=\dfrac{X_{0}}{|X_{0}|}\in \overline{\Omega}$ and $\rho_{\ell}(x_{0})=|X_{0}|$. 
i.e., $|X_0|\in \text{Range }\rho_\ell$ for all $\ell$. 
We also notice that if $R_0\in \text{Range }\left(h(\cdot, P, b)\right)$, then by Lemma \ref{lm:minandmaxofovalskappa<1}
\[
\dfrac{1-\kappa}{1+\kappa}R_0\leq h(x, P, b) \leq \dfrac{1+\kappa}{1-\kappa}R_0.
\]
From Lemma \ref{rmk:uniformlipschitz} and the proof of Lemma 5.5, the family $\{\rho\in \mathcal F: C_0\leq \rho \leq C_1\}$ is compact.
Then applying Theorem \ref{thm:uniquenessabstractcase}
we obtain the existence of the desired solution.
\end{proof}

\setcounter{equation}{0}
\section{Near field refractor problem, existence of solutions for $\kappa>1$}\label{sec:existencekappa>1}

\subsection{Formulation of problem}
Let $\Omega\subset S^{n-1}$ be a domain with $|\partial \Omega|=0$
(measure in the sphere), and let $D\subset \R^{n}$ be a compact hypersurface with $0\not\in \overline{D}$.

We assume:
\begin{enumerate}
\item[(H3)]
$\inf_{x\in \overline{\Omega}, P\in \overline{D}}x\cdot \dfrac{P}{|P|}\geq \dfrac{1}{\kappa}+\tau$ for some $0<\tau <1-\dfrac{1}{\kappa}$. 
\item[(H4)]
Let $0< r_{0}< \dfrac{\kappa ^{2}\tau^{2}}{4(\kappa-1)^{2}}
\,\inf_{P\in D}|P|$ and consider the cone in $\R^{n}$ 
\[Q_{r_{0}}=\{tx: x\in \overline{\Omega}, 0<t \leq r_{0}\}.\]
For each $m\in S^{n-1}$ and for each $X\in Q_{r_{0}}$ 
we assume that $\overline{D}\cap  \{X+tm: t\geq 0\}$ contains at most one point.
That is, for each $X\in Q_{r_{0}}$ each ray emanating from $X$ intersects $\overline{D}$ at most in one point.
\end{enumerate}

Similarly with the case $\kappa<1$, but now keeping in mind \eqref{eq:equationovalk>1} and \eqref{eq:polareqovalk>1},
we define the notion of refractor when $\kappa>1$.
\begin{definition}\label{def:refractorkappa>1}
Let $\mathcal S=\{x\rho(x): x\in \overline{\Omega}\}\subset Q_{r_{0}}$ be a surface.
We say that $\mathcal S$ is a near field refractor if for any point $y\rho(y)\in \mathcal S$ there exist $P\in \overline{D}$ and $b>0$ such that the refracting oval $\mathcal O(P,b)$ supports $\mathcal S$ at $y\rho(y)$, i.e. $\rho(x)\geq h(x,P,b)$ for all $x\in \overline{\Omega}$ with equality at $x=y$, and
\begin{equation}\label{eq:inclusionomegainIPb}
\overline{\Omega}\subset \left\{x\in S^{n-1}: x\cdot \dfrac{P}{|P|}\geq I(P,b) \right\}
\end{equation}
where $I(P,b)$ is defined in \eqref{eq:definitionofIPb}.
The near field refractor map of $\mathcal S$ is defined by
\[
\mathcal R_{\mathcal S} (x)
=
\{P\in \overline{D}:  \text{there exists a supporting oval $\mathcal O(P,b)$ to $\mathcal S$ at $\rho(x)x$}
\}.
\]
\end{definition}

\begin{remark}\label{rmk:estimateofrhoandrhox0}\rm
If $\mathcal S=\{\rho(x)x: x\in \overline{\Omega}\}$ is a refractor, then for any $x,x_{0}\in \overline{\Omega}$ we have
\[
\rho(x)\leq \sqrt{2\sup_{P\in D}|P|}\, \sqrt{\rho(x_{0})}.
\]
Indeed, 
if $h(z,P_{x},b_{x})$ is a supporting oval at $\rho(x)x$, then from Lemma \ref{lm:estimateofovalsk>1} we have $\rho(x_{0})\geq h(x_{0},P_{x},b_{x})
\geq \dfrac{\kappa |P_{x}|-b_{x}}{\kappa -1}\geq \dfrac{h(x,P_{x},b_{x})^{2}}{2|P_{x}|}
=
\dfrac{\rho(x)^{2}}{2|P_{x}|}$.
\end{remark}

\begin{remark}\label{eq:estimateofIPb}\rm
$\mathcal R_{\mathcal S}(\overline{\Omega})=\overline{D}$ for any refractor
$\mathcal S=\{\rho(x)x:x\in \overline{\Omega}\}$.
To prove the remark, we first notice that if $P\in \overline{D}$ and $h(x_{0},P,b)\leq r_{0}$ for some $x_{0}\in \overline{\Omega}$, then $I(P,b)\leq \dfrac{1}{\kappa}+\tau$.
Indeed, we have from Lemma \ref{lm:estimateofovalsk>1}(d)
that $I(P,b)-\dfrac{1}{\kappa}
\leq \dfrac{2\sqrt{\kappa -1}}{\kappa}
\dfrac{\sqrt{\kappa |P|-b}}{\sqrt{|P|}}\leq \dfrac{2(\kappa -1)}{\kappa}\dfrac{\sqrt{r_{0}}}{\sqrt{|P|}}\leq \tau$ from the choice of $r_{0}$ in condition (H4).
Given $P\in \overline{D}$, let $\mathcal O(P,b_{1})$ be the oval with 
\[
b_{1}=\inf \{b\in (|P|,\kappa |P|): h(x,P,b)\leq \rho(x) \text{ in $\overline{\Omega}$}\}.
\]
Obviously, $h(x,P,b_{1})$ touches $\mathcal S$ at some $x_{1}\in \overline{\Omega}$.

\end{remark}
\begin{lemma}\label{rmk:uniformlipschitzkappa>1}
If $\mathcal S$ is a near field refractor with defining function $\rho(x)$,
then $\rho$ is Lipschitz in $\overline{\Omega}$ with a Lipschitz constant depending only on the constants in 
the assumptions (H3) and (H4).
\end{lemma}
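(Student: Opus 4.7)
The plan is to mirror closely the strategy used for the $\kappa<1$ case in Lemma \ref{rmk:uniformlipschitz}, but now with $h$ given by the polar equation \eqref{eq:polareqovalk>1}. Given $x_{0}\in\overline\Omega$, pick a supporting oval $\mathcal O(P,b)$ to $\mathcal S$ at $\rho(x_{0})x_{0}$ with $P\in\overline D$, so that $\rho(x)\geq h(x,P,b)$ with equality at $x_{0}$; the Lipschitz estimate will follow from
\[
\rho(x_{0})-\rho(x)\leq h(x_{0},P,b)-h(x,P,b),
\]
together with a symmetric inequality obtained by swapping $x$ and $x_{0}$. Writing $\widetilde\Delta(t)=(\kappa^{2}t-b)^{2}-(\kappa^{2}-1)(\kappa^{2}|P|^{2}-b^{2})$, the difference $h(x_{0},P,b)-h(x,P,b)$ splits into the linear part $\frac{\kappa^{2}}{\kappa^{2}-1}(x_{0}-x)\cdot P$, controlled by $C(\kappa)\,|P|\,|x-x_{0}|$, and the square-root part
\[
\frac{1}{\kappa^{2}-1}\,\frac{\widetilde\Delta(x\cdot P)-\widetilde\Delta(x_{0}\cdot P)}{\sqrt{\widetilde\Delta(x\cdot P)}+\sqrt{\widetilde\Delta(x_{0}\cdot P)}},
\]
whose numerator factors as $\kappa^{2}(x-x_{0})\cdot P\,\bigl(\kappa^{2}(x+x_{0})\cdot P-2b\bigr)$ and is therefore bounded by $C(\kappa)\,|P|^{2}\,|x-x_{0}|$.

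The main obstacle, as in the case $\kappa<1$, is to produce a lower bound on $\sqrt{\widetilde\Delta(x_{0}\cdot P)}$ of the correct order in $|P|$. To that end I would argue: first, by Lemma \ref{lm:estimateofovalsk>1}(a) applied on all of $\Gamma(P,b)$,
\[
\frac{\kappa|P|-b}{\kappa-1}\leq h(x_{0},P,b)=\rho(x_{0})\leq r_{0},
\]
so $\kappa|P|-b\leq(\kappa-1)r_{0}$ and hence $(\kappa^{2}-1)(\kappa^{2}|P|^{2}-b^{2})\leq 2\kappa(\kappa+1)(\kappa-1)^{2}\,r_{0}\,|P|$. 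Combining this with (H4), which gives $r_{0}<\frac{\kappa^{2}\tau^{2}}{4(\kappa-1)^{2}}|P|$, yields
\[
(\kappa^{2}-1)(\kappa^{2}|P|^{2}-b^{2})\leq \tfrac{\kappa^{3}(\kappa+1)\tau^{2}}{2}\,|P|^{2}.
\]
Second, (H3) implies $x_{0}\cdot P\geq(\tfrac{1}{\kappa}+\tau)|P|$, so
\[
\kappa^{2}x_{0}\cdot P-b\geq (\kappa|P|-b)+\kappa^{2}\tau|P|\geq \kappa^{2}\tau|P|,
\]
giving $(\kappa^{2}x_{0}\cdot P-b)^{2}\geq\kappa^{4}\tau^{2}|P|^{2}$. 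Subtracting the two bounds,
\[
\widetilde\Delta(x_{0}\cdot P)\geq \kappa^{4}\tau^{2}|P|^{2}-\tfrac{\kappa^{3}(\kappa+1)\tau^{2}}{2}|P|^{2}=\tfrac{\kappa^{3}(\kappa-1)\tau^{2}}{2}\,|P|^{2},
\]
which is the desired positive lower bound of order $|P|^{2}$.

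Plugging this into the square-root quotient bounds the square-root part by $C(\kappa,\tau)\,|P|\,|x-x_{0}|$, and together with the linear part we obtain
\[
|h(x,P,b)-h(x_{0},P,b)|\leq C(\kappa,\tau)\,\sup_{\overline D}|P|\,|x-x_{0}|.
\]
Combining with the symmetric inequality (using a supporting oval at $\rho(x)x$) gives the Lipschitz bound for $\rho$ with constant depending only on $\kappa$, $\tau$, and $\max_{\overline D}|P|$. The step that requires the most care is verifying that the two competing terms in $\widetilde\Delta(x_{0}\cdot P)$ actually separate — this is precisely what forces the smallness assumption on $r_{0}$ in (H4), and is the analogue of the use of \eqref{eq:maxofp-hforx.P>b} in the proof of Lemma \ref{rmk:uniformlipschitz}.
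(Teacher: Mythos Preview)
Your proof is correct and follows the same overall skeleton as the paper's argument (split $h(x_0,P,b)-h(x,P,b)$ into a linear piece and a square-root piece, then bound $\widetilde\Delta$ from below by a positive multiple of $|P|^2$), but you obtain the crucial lower bound on $\widetilde\Delta(x_0\cdot P)$ by a genuinely different computation. The paper first invokes Lemma~\ref{lm:estimateofovalsk>1}(d) to show that $I(P,b)\leq \tfrac1\kappa+\tau-\sigma$ for an explicit $\sigma>0$ determined by (H4); since $\widetilde\Delta$ vanishes exactly at $x\cdot P=|P|\,I(P,b)$, the uniform gap $\sigma$ between $x\cdot P/|P|$ and $I(P,b)$ is then fed into an expansion of the square to produce $\widetilde\Delta(x\cdot P)\geq (\kappa^2\sigma\,\inf_D|P|)^2$ for every $x\in\overline\Omega$. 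You instead bound the two summands of $\widetilde\Delta$ separately: Lemma~\ref{lm:estimateofovalsk>1}(a) together with $\rho(x_0)\leq r_0$ controls $(\kappa^2-1)(\kappa^2|P|^2-b^2)$ from above, and (H3) controls $(\kappa^2 x_0\cdot P-b)^2$ from below; the smallness of $r_0$ in (H4) is exactly what makes the difference positive. Your route is more elementary in that it avoids the auxiliary quantity $I(P,b)$ and Lemma~\ref{lm:estimateofovalsk>1}(d) altogether, and it makes the role of the constraint on $r_0$ very explicit. The paper's route has the advantage of a clean geometric interpretation --- $\overline\Omega$ stays uniformly away from the boundary of the visibility cone $\Gamma(P,b)$ --- and yields the lower bound simultaneously for all $x\in\overline\Omega$, not just at the touching point $x_0$ (though, as you note, bounding $\widetilde\Delta$ at $x_0$ alone already suffices for the quotient).
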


\begin{proof}
As in the proof of Lemma \ref{rmk:uniformlipschitz}, the Lipschitz continuity and constant of $h(x,P,b)$ is reduced to 
the Lipschitz continuity and lower bound of $\Delta$, where
\begin{align*}
\Delta(t)&=(\kappa^2t-b)^2-(\kappa^{2}-1)(\kappa^2|P|^2-b^2),  \\
h(x,P,b)&=\dfrac{(\kappa^2x\cdot P-b)-\sqrt{\Delta(x\cdot P)}}
{\kappa^2-1}
\qquad\text{for }x\in \overline\Omega.
\end{align*}
By (H3) $x\cdot \dfrac{P}{|P|}\geq   \dfrac{1}{\kappa}+\tau$ for
$x\in \overline\Omega$.
From Lemma \ref{lm:estimateofovalsk>1}(a) and (d) we have
\begin{align*}
I(P,b)-\frac1\kappa
&=\dfrac{b+\sqrt{(\kappa^{2}-1)
(\kappa^{2}|P|^{2}-b^{2})}}{\kappa^{2}|P|}-\frac1\kappa  \\
&\le \frac{2\sqrt{\kappa-1}}\kappa \frac{\sqrt{\kappa|P|-b}}{\sqrt{|P|}} \\
&\le \frac{2(\kappa-1)}\kappa \frac{\sqrt{r_0}}{\sqrt{|P|}}\qquad
\text{if } h(x_0,P,b)\le r_0  \\
&\leq \tau-\sigma,
\end{align*}
where $\sigma=\tau-\dfrac{2(\kappa-1)}\kappa 
\dfrac{\sqrt{r_0}}{\sqrt{\inf_{P\in D}|P|}}>0$ by (H4).
\bigskip
Thus, if $h(x_{0},P,b)\leq r_{0}$, 
for $x\in \overline\Omega$ we have $x\cdot \dfrac{P}{|P|}\geq  I(P,b)+\sigma$  
and furthermore  
\begin{align}\label{eq:lowerestimateofdeltakappa>1}
\Delta(x\cdot P)
&\ge \left[\kappa^2|P|(I(P,b)+\sigma)-b\right]^2-
(\kappa^2-1)(\kappa^2|P|^2-b^2)\notag \\
&\ge \left[\kappa^2|P|I(P,b)-b\right]^2+(\kappa^2|P|\sigma)^2-
(\kappa^2-1)(\kappa^2|P|^2-b^2) \notag \\
&\ge \left(\kappa^2\inf_{P\in D}|P|\sigma\right)^2.
\end{align}
Therefore, the uniform Lipschitz continuity for ovals and refractors
contained in $Q_{r_0}$ on $\overline\Omega$ follows.
\end{proof}


\subsection{Application of the setup from Subsection \ref{subsect:convexcaseBIS} to the solution of the near field refractor problem with $\kappa>1$} 

We apply the setup in that subsection with the spaces $X=\overline\Omega$, and $Y=\overline D$.
The Radon measure $\omega$ in $\overline\Omega$ there is given by $\omega=fdx$ with 
$f\in L^1(\overline\Omega)$ nonnegative. 
If $\mathcal S$ is a near field refractor in the sense of Definition \ref{def:refractorkappa>1}, then it is proved in 
Lemma \ref{measurabilityrefractormapkappa>1} below that the map $\mathcal R_{\mathcal S}\in C_s(\overline\Omega, \overline D)$.
From Lemma \ref{lm:radonmeasureontarget} we therefore obtain that
the set function
\begin{equation}\label{def:eqrefractormeasure}
\mathcal M_{\mathcal S, f}(F):=\int_{\mathcal R_{\mathcal S}^{-1}(F)}f\,dx
\end{equation}
is a Radon measure on $\overline D$. We call this measure 
{\it the near field refractor measure associated with $f$ and the refractor $\mathcal S$}.

We next introduce the family $\mathcal F$. Let $\mathcal S_\rho$ denote the near field refractor with defining radial function $\rho$ given by 
Definition \ref{def:refractorkappa>1}.
We let $\mathcal F$ be the family of functions in $C^+(\overline\Omega)$ given by
$$\mathcal F=\{\rho(x): \mathcal S_\rho \text{ is a near field refractor}\}.$$ 
On $\mathcal F$ we introduce the mapping $\mathcal T$ by 
$$\mathcal T(\rho)=\mathcal R_{\mathcal S_\rho}.$$
To continue with the application of the results from Subsection \ref{subsect:convexcaseBIS}, 
we show in the next two lemmas that $\mathcal R_{\mathcal S}\in C_s(\overline\Omega, \overline D)$ and $\mathcal T$ is continuous at each $\rho\in \mathcal F$ in the sense of Definition
\ref{def:definitionofmapTcontinuous}.

\begin{lemma}\label{measurabilityrefractormapkappa>1}
For any near field refractor $\mathcal S$, we have $\mathcal R_{\mathcal S}\in C_{\mathcal S}(\overline\Omega, \overline D)$.
\end{lemma}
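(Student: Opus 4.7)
The plan is to mirror, step by step, the proof of Lemma \ref{measurabilityrefractormap}, with the supporting condition $\rho\ge h(\cdot,P,b)$ from Definition \ref{def:refractorkappa>1} replacing $\rho\le h$, and with the estimates of Lemma \ref{lm:estimateofovalsk>1} replacing those of Lemma \ref{lm:minandmaxofovalskappa<1}. Three things must be verified: surjectivity $\mathcal R_{\mathcal S}(\overline\Omega)=\overline D$; that $\mathcal R_{\mathcal S}(x)$ is a singleton outside a set of $\omega$-measure zero; and continuity in the set-valued sense of Section \ref{sec:abstract setup}. Surjectivity is essentially the content of Remark \ref{eq:estimateofIPb}: given $P\in\overline D$, letting $b_1=\inf\{b\in(|P|,\kappa|P|):h(\cdot,P,b)\le\rho\text{ on }\overline\Omega\}$, the oval $\mathcal O(P,b_1)$ supports $\mathcal S$ from below at some $x_1\in\overline\Omega$, and the combination of Lemma \ref{lm:estimateofovalsk>1}(d), (H4) and (H3) forces the opening condition \eqref{eq:inclusionomegainIPb}, whence $P\in\mathcal R_{\mathcal S}(x_1)$.

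For single-valuedness, suppose $P_1,P_2\in\mathcal R_{\mathcal S}(x_0)$ are distinct, with corresponding supporting ovals $\mathcal O(P_j,b_j)$. If $\mathcal S$ admitted a tangent hyperplane $\Pi$ at $\rho(x_0)x_0$, it would have to coincide with the common tangent plane of both ovals at that point, and the Snell law \eqref{eq:snellvectorform} would force
\[
\dfrac{P_1-\rho(x_0)x_0}{|P_1-\rho(x_0)x_0|}=\dfrac{P_2-\rho(x_0)x_0}{|P_2-\rho(x_0)x_0|},
\]
so $P_1$ and $P_2$ would lie on a common ray out of $\rho(x_0)x_0\in Q_{r_0}$, contradicting (H4). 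Hence every doubly-valued point of $\mathcal R_{\mathcal S}$ is a point of non-differentiability of the graph $\mathcal S$; since $\rho$ is Lipschitz on $\overline\Omega$ by Lemma \ref{rmk:uniformlipschitzkappa>1} and $|\partial\Omega|=0$, this singular set projects to an $\omega$-null subset of $\overline\Omega$.

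For continuity, given $x_i\to x_0$ and $P_i\in\mathcal R_{\mathcal S}(x_i)$ with supporting parameters $b_i$, we have $\rho\ge h(\cdot,P_i,b_i)$, $\rho(x_i)=h(x_i,P_i,b_i)\le r_0$, together with the opening condition \eqref{eq:inclusionomegainIPb} for each $i$. Compactness of $\overline D$ yields $P_{i_k}\to P_0\in\overline D$ along a subsequence, and the bound $\kappa|P_i|-b_i\le(\kappa-1)\rho(x_i)\le(\kappa-1)r_0$ from Lemma \ref{lm:estimateofovalsk>1}(a), together with $|P_i|<b_i<\kappa|P_i|$, yields $b_i\to b_0\in[|P_0|,\kappa|P_0|]$ along a further subsequence. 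Passing to the limit in the supporting inequality and in \eqref{eq:inclusionomegainIPb} will give $P_0\in\mathcal R_{\mathcal S}(x_0)$ provided $b_0\in(|P_0|,\kappa|P_0|)$. This nondegeneracy of the limiting parameter is the main obstacle I anticipate: one must rule out $b_i\to\kappa|P_0|^-$ (which would force $\rho(x_i)\to 0$ by Lemma \ref{lm:estimateofovalsk>1}(a), contradicting the continuity and positivity of $\rho$ on compact $\overline\Omega$) and $b_i\to|P_0|^+$ (which would force $\rho(x_i)x_i\to P_0$ by the final two remarks of Section \ref{sec:ovals}, contradicting the disjointness of $Q_{r_0}$ and $\overline D$ guaranteed by the size constraint on $r_0$ in (H4)).
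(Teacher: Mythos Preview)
Your proof is correct and follows essentially the same route as the paper's. One slip: when ruling out $b_i\to\kappa|P_0|^-$ you need the \emph{upper} bound $\rho(x_i)=h(x_i,P_i,b_i)\le\sqrt{2|P_i|}\sqrt{(\kappa|P_i|-b_i)/(\kappa-1)}$ from Lemma~\ref{lm:estimateofovalsk>1}(b), not (a), to force $\rho(x_i)\to 0$; part (a) gives only a lower bound. The paper organizes the continuity step slightly more directly, obtaining from parts (b) and (c) of Lemma~\ref{lm:estimateofovalsk>1} the quantitative two-sided bound
\[
|P_i|+(\kappa-1)\bigl(\inf_D|P|-r_0\bigr)\le b_i\le \kappa|P_i|-\dfrac{a^2(\kappa-1)}{2\sup_D|P|},\qquad a:=\min_{\overline\Omega}\rho>0,
\]
which immediately keeps the limit $b_0$ in the open interval $(|P_0|,\kappa|P_0|)$ without a separate endpoint argument.
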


\begin{proof}
As in the proof of Lemma \ref{measurabilityrefractormap}, using (H4), one can show that $\mathcal R_{\mathcal S}(x)$ is single-valued for a.e. x with
respect to $\omega$.

To prove that $\mathcal R_{\mathcal S}$ is continuous, let $x_i\longrightarrow x_0$ and $P_i\in \mathcal R_{\mathcal S}(x_i)$.
Let $\mathcal O(P_{i},b_{i})$ be a supporting oval to $\mathcal S$ at $\rho(x_{i})x_{i}$.
We have $a\leq \rho(x_{i})= h(x_{i},P_{i},b_{i})\leq r_{0}$ and from Lemma 
\ref{lm:estimateofovalsk>1} (b) we get 
$a\leq \sqrt{2|P_{i}|} \sqrt{\dfrac{\kappa |P_{i}|-b_{i}}{\kappa -1}}$, and so
$b_{i}\leq \kappa |P_{i}|-\dfrac{a^{2}(\kappa -1)}{2\sup_{D}|P|}$.
On the other hand, by Lemma \ref{lm:estimateofovalsk>1} (c)
we have $|P_{i}-\rho(x_{i})x_{i}|\leq \dfrac{b_{i}-|P_{i}|}{\kappa -1}$.
Since $|P_{i}-\rho(x_{i})x_{i}|\geq |P_{i}|-r_{0}$, we obtain  
$b_{i}\geq  |P_i|+(\kappa -1)\left(\inf_{D}|P|-r_0\right)$.
Therefore selecting a subsequence we can assume that 
$P_i\longrightarrow P_0$ and $b_i\longrightarrow b_0$,
as $i \longrightarrow \infty$.
Taking limits
one obtains that the oval $\mathcal O(P_{0},b_{0})$ supports
$\mathcal S$ at $\rho(x_0)x_0$, $x\cdot \dfrac{P_{0}}{|P_{0}|}\geq I(P_{0},b_{0})$, and
$P_0\in \mathcal R_{\mathcal S}(x_0)$.  This completes the proof.
\end{proof}

By Lemma \ref{rmk:uniformlipschitzkappa>1} and modifying the proof of Lemma \ref{measurabilityrefractormapkappa>1}, 
we also obtain the following analogue of Lemma \ref{lm:weakcompactness}
when $\kappa>1$.

\begin{lemma}\label{lm:weakcompactnesskappa>1}
The refractor mapping $\mathcal T(\rho)=\mathcal R_{\mathcal S_\rho}$ is continuous at each $\rho\in \mathcal F$.
Moreover, for $0<C_0<C_1$, $\{\rho\in \mathcal F: C_0\leq \rho(x) \leq C_1\}$ is compact in $C(\overline \Omega)$.
\end{lemma}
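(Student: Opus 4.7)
The plan is to imitate the proof of Lemma \ref{lm:weakcompactness} (the $\kappa<1$ case), substituting the $\kappa>1$ oval estimates from Lemma \ref{lm:estimateofovalsk>1} and the uniform Lipschitz bound from Lemma \ref{rmk:uniformlipschitzkappa>1}.

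For continuity of $\mathcal T$ at $\rho\in\mathcal F$, suppose $\rho_j\to\rho$ uniformly in $\overline\Omega$, fix $x_0\in\overline\Omega$, and take $P_j\in\mathcal T(\rho_j)(x_0)=\mathcal R_{\mathcal S_{\rho_j}}(x_0)$. By Definition \ref{def:refractorkappa>1} there exist $b_j$ with $\rho_j(x)\geq h(x,P_j,b_j)$ on $\overline\Omega$, equality at $x_0$, and $\overline\Omega\subset\{x:x\cdot P_j/|P_j|\geq I(P_j,b_j)\}$. Since $\rho\in C^+(\overline\Omega)$ and $\rho_j\to\rho$ uniformly, we may assume $a\leq\rho_j\leq r_0$ for some $a>0$. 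Exactly as in Lemma \ref{measurabilityrefractormapkappa>1}, Lemma \ref{lm:estimateofovalsk>1}(b) applied to $a\leq h(x_0,P_j,b_j)$ gives an upper bound $b_j\leq \kappa|P_j|-a^2(\kappa-1)/(2\sup_{\overline D}|P|)$, while Lemma \ref{lm:estimateofovalsk>1}(c) together with $|P_j-\rho_j(x_0)x_0|\geq |P_j|-r_0$ gives a lower bound $b_j\geq|P_j|+(\kappa-1)(\inf_{\overline D}|P|-r_0)$. Since $\overline D$ is compact, passing to a subsequence yields $P_j\to P_0\in\overline D$ and $b_j\to b_0$. Passing to the limit in the three defining relations shows that $\mathcal O(P_0,b_0)$ supports $\mathcal S_\rho$ at $\rho(x_0)x_0$ and that $\overline\Omega\subset\{x:x\cdot P_0/|P_0|\geq I(P_0,b_0)\}$ (here $I(P,b)$ is continuous by \eqref{eq:definitionofIPb}). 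Hence $P_0\in\mathcal R_{\mathcal S_\rho}(x_0)=\mathcal T(\rho)(x_0)$, which is Definition \ref{def:definitionofmapTcontinuous}.

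For the compactness claim, let $\{\rho_j\}\subset\mathcal F$ satisfy $C_0\leq\rho_j\leq C_1$ on $\overline\Omega$. By Lemma \ref{rmk:uniformlipschitzkappa>1} the family is uniformly Lipschitz with a constant depending only on the constants in (H3)–(H4), so Arzel\`a–Ascoli produces a subsequence converging uniformly to some continuous $\rho$ with $C_0\leq\rho\leq C_1$. It remains to show $\rho\in\mathcal F$, i.e., $\mathcal S_\rho$ is a near field refractor. For each $x_0\in\overline\Omega$ pick a supporting oval $\mathcal O(P_j,b_j)$ to $\mathcal S_{\rho_j}$ at $\rho_j(x_0)x_0$; the same two estimates from Lemma \ref{lm:estimateofovalsk>1}(b),(c) (now with $a=C_0$) bound $b_j$ in a compact interval, so along a further subsequence $P_j\to P_0\in\overline D$ and $b_j\to b_0$. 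Passing to the limit in $\rho_j(x)\geq h(x,P_j,b_j)$, in the equality at $x_0$, and in the aperture condition \eqref{eq:inclusionomegainIPb}, we obtain a supporting oval $\mathcal O(P_0,b_0)$ to $\mathcal S_\rho$ at $\rho(x_0)x_0$. Hence $\rho\in\mathcal F$.

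The only step requiring care beyond routine adaptation is the uniform boundedness of $b_j$: unlike the $\kappa<1$ case, the relevant estimate $b_j\leq\kappa|P_j|-\text{const}$ is not immediate from the explicit oval formula and must come from the quadratic bound in Lemma \ref{lm:estimateofovalsk>1}(b) combined with the lower bound $a$ on $\rho_j(x_0)$. Once this control is in hand, the rest is passage to the limit in inequalities involving continuous functions, which is standard.
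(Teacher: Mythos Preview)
Your proof is correct and follows exactly the approach the paper indicates: continuity of $\mathcal T$ is obtained by adapting the compactness argument in Lemma~\ref{measurabilityrefractormapkappa>1} (bounding $b_j$ via Lemma~\ref{lm:estimateofovalsk>1}(b),(c) and passing to the limit), and compactness of the sublevel set follows from the uniform Lipschitz bound of Lemma~\ref{rmk:uniformlipschitzkappa>1} via Arzel\`a--Ascoli together with the same limiting argument to show the limit lies in~$\mathcal F$. The paper itself only sketches this by reference, and you have correctly written out the details.
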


To be able to apply Theorem \ref{thm:abstractcasediscritecaseconvexcaseBIS}, we next need to verify that the family $\mathcal F$ and the map $\mathcal T$ satisfy conditions (A1')-(A3') from Subsection 
\ref{subsect:convexcaseBIS}.
Indeed, (A1') and (A2') follow immediately from the Definition \ref{def:refractorkappa>1} of refractor, Lemma \ref{measurabilityrefractormapkappa>1}, and 
Lemma \ref{lm:weakcompactnesskappa>1}.

It remains to verify (A3').
For that we use the estimates for ovals proved in Subsection \ref{subsect:estimatesofovalskappa>1}.
Indeed, with the notation in condition (A3') we will take 
\[
h_{t,y_0}(x)=h(x,P,b)
\]
with the understanding that $t=b$, and $y_0=P$, and 
$h(x,P,b)$ is the oval defined by \eqref{eq:equationovalk>1} and \eqref{eq:polareqovalk>1}.
By Lemma \ref{lm:estimateofovalsk>1} and Remark \ref{eq:estimateofIPb}, we have that the family
$$\left\{h(\cdot, P,b): \kappa |P|-\dfrac{(\kappa-1)r_0^2}{2\sup_D|P|}<b<\kappa |P|  \right\} \subset \mathcal F,$$ with $r_0$ from (H4).
Obviously, $P\in \mathcal T(h(\cdot, P,b))(x)$ for all $x\in \bar \Omega$, that is, (A3')(a) holds. 
%
Condition (A3')(b) is trivial. Condition (A3')(c) follows Lemma \ref{lm:estimateofovalsk>1} parts (a) and (b).
To verify (A3')(d), we notice that from the lower bound \eqref{eq:lowerestimateofdeltakappa>1},
we obtain that 
$|h(x,P,b')-h(x,P,b)|\leq C\,|b'-b|$, with $C$ depending only on the constants in 
(H3) and (H4).

The notion of weak solution is again introduced through conservation
of energy.

\begin{definition}
A near field refractor $\mathcal{S}$ is a weak solution of the near field refractor problem for the case $\kappa>1$ with emitting illumination intensity
$f(x)$ on $\overline\Omega$ and prescribed refracted illumination
intensity $\mu$ on $\overline{D}$ if for any Borel set
$F\subset \overline{D}$
\begin{equation}
\mathcal M_{\mathcal S, f}(F)
=\int_{\mathcal {\mathcal R}_{\mathcal S}^{-1}(F)}f\,dx=\mu(F).
\end{equation}
\end{definition}

\subsubsection{Existence of solutions for sum of Dirac measures}
This follows from Theorem \ref{thm:abstractcasediscritecaseconvexcaseBIS}.
\begin{theorem}\label{thm:existencesumofdiracmasseskappa>1}
Suppose (H3) and (H4) hold.
Let $P_{1},\cdots , P_{N}$ be distinct points in $\overline{D}$,  
$g_{1},\cdots , g_{N}$ are positive numbers, and $f\in L^{1}(\Omega)$ with $f>0$ a.e. in $\Omega$ such that 
\begin{equation}\label{eq:conservationofenergy}
\int_{\overline{\Omega}} f(x)\,dx=\sum_{i=1}^{N} g_{i}.
\end{equation}
Then for each $b_{1}$ such that $\kappa |P_{1}|-\sigma<  b_{1} < \kappa |P_{1}|$,
with $\sigma=\dfrac{(\kappa -1)r_{0}^4}{8 ( \sup_{D}|P|)^3}$, there exist a unique $(b_{2},\cdots , b_{N})$ such that the poly-oval 
$
\mathcal S= \{\rho(x)x: x\in \overline{\Omega}\}
$ with
\begin{equation}\label{eq:definitionofpolyovalkappa>1}
\rho(x)=\max_{1\leq i \leq N} h(x,P_{i},b_{i})
\end{equation}
is a weak solution to the near field refractor problem.
Moreover, $\mathcal M_{\mathcal S, f}\left(\{P_{i}\}\right)=g_{i}$ for $1\leq i \leq N$.
\end{theorem}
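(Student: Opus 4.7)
The plan is to apply Theorem \ref{thm:abstractcasediscritecaseconvexcaseBIS} (and Theorem \ref{lm:partialcomparisonconvex} for uniqueness) with $X=\overline{\Omega}$, $Y=\overline{D}$, $\omega=f\,dx$, and the $\mathcal{T}$-convex family $\mathcal{F}=\{\rho : \mathcal{S}_\rho \text{ is a near field refractor}\}$ already introduced in this section. The building blocks will be the ovals $h_{t,y_0}(x):=h(x,P,b)$ with $t=b$, $y_0=P$. Conditions (A1'), (A2'), (A3') have been verified in the discussion preceding the theorem, and the energy conservation \eqref{eq:conservationofenergy} is assumed. Thus the only hypothesis of Theorem \ref{thm:abstractcasediscritecaseconvexcaseBIS} requiring work is the existence of an admissible initial tuple $(b_1^0,\ldots,b_N^0)$ with
\[
h(\cdot,P_1,b_1^0)\leq \min_{2\leq i\leq N} h(\cdot,P_i,b_i^0)\quad\text{on }\overline{\Omega}.
\]

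Fix $b_1\in(\kappa|P_1|-\sigma,\kappa|P_1|)$ and set $b_1^0=b_1$. By Lemma \ref{lm:estimateofovalsk>1}(b) together with the definition of $\sigma=\dfrac{(\kappa-1)r_0^4}{8(\sup_D|P|)^3}$,
\[
\max_{x\in\overline{\Omega}} h(x,P_1,b_1)\;\leq\;\sqrt{2|P_1|}\,\sqrt{\tfrac{\sigma}{\kappa-1}}\;\leq\;\sqrt{2\sup_D|P|}\,\sqrt{\tfrac{\sigma}{\kappa-1}}\;=\;\frac{r_0^2}{2\sup_D|P|}.
\]
For each $i\geq 2$, choose $b_i^0$ so that $\kappa|P_i|-b_i^0=\dfrac{(\kappa-1)r_0^2}{2\sup_D|P|}$; this is an admissible choice (i.e.\ $|P_i|<b_i^0<\kappa|P_i|$) thanks to the smallness of $r_0$ enforced by (H4). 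Lemma \ref{lm:estimateofovalsk>1}(a) then yields $\min_x h(x,P_i,b_i^0)=\dfrac{r_0^2}{2\sup_D|P|}$, and Lemma \ref{lm:estimateofovalsk>1}(b) gives
\[
\max_{x\in\overline{\Omega}} h(x,P_i,b_i^0)\;\leq\;\sqrt{2|P_i|}\,\sqrt{\tfrac{r_0^2}{2\sup_D|P|}}\;\leq\;r_0.
\]
The latter bound, via Remark \ref{eq:estimateofIPb}, ensures $\overline{\Omega}\subset\{x\cdot P_i/|P_i|\geq I(P_i,b_i^0)\}$, so each $h(\cdot,P_i,b_i^0)$ is a valid refractor, and the chain $h(x,P_1,b_1)\leq \frac{r_0^2}{2\sup_D|P|}\leq h(x,P_i,b_i^0)$ gives the required inequality.

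With the initial configuration in hand, Theorem \ref{thm:abstractcasediscritecaseconvexcaseBIS} produces $(b_2,\ldots,b_N)$ such that $\rho(x)=\max_{1\leq i\leq N}h(x,P_i,b_i)$ satisfies $M_{\mathcal{T}(\rho)}=\sum_{i=1}^N g_i\,\delta_{P_i}$, which is exactly the assertion $\mathcal{M}_{\mathcal{S},f}(\{P_i\})=g_i$ and hence $\mathcal{S}$ is the desired weak solution. For uniqueness of $(b_2,\ldots,b_N)$ given $b_1$, I invoke Theorem \ref{lm:partialcomparisonconvex}: the strict monotonicity $h_{s,P}<h_{t,P}$ for $t<s$ required there is immediate from the explicit formula \eqref{eq:polareqovalk>1} for $h(x,P,b)$ in $b$, and then the comparison statement with $b_1^*=b_1$ forces $b_i^*=b_i$ for all $i$. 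The main (and essentially only) technical obstacle is the quantitative balancing of the upper bound on $\max h(\cdot,P_1,b_1)$ against the lower bound on $\min h(\cdot,P_i,b_i^0)$; this is precisely what dictates the specific form of $\sigma$ in the statement, obtained by equating the bounds from Lemma \ref{lm:estimateofovalsk>1}(a) and (b).
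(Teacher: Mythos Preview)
Your argument follows the paper's proof closely: both reduce the theorem to verifying the initial-configuration hypothesis of Theorem~\ref{thm:abstractcasediscritecaseconvexcaseBIS} by bounding $\max h(\cdot,P_1,b_1)$ above via Lemma~\ref{lm:estimateofovalsk>1}(b) and $\min h(\cdot,P_i,b_i^0)$ below via Lemma~\ref{lm:estimateofovalsk>1}(a), and then invoking Theorem~\ref{lm:partialcomparisonconvex} for uniqueness.

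There is one small slip. Your choice $b_i^0=\kappa|P_i|-\dfrac{(\kappa-1)r_0^2}{2\sup_D|P|}$ lands exactly at the left endpoint $\alpha_{P_i}$ of the (A3')-interval, not strictly inside it, so $h_{b_i^0,P_i}$ is not literally a member of the family $\{h_{t,P_i}\}_{\alpha_{P_i}<t<\beta_{P_i}}$ required by Theorem~\ref{thm:abstractcasediscritecaseconvexcaseBIS} (and the compactness step in that theorem's proof uses $b_i\geq b_i^0>\alpha_{p_i}$). The paper avoids this by writing $b_1=\kappa|P_1|-\epsilon^2\sigma$ with $0<\epsilon<1$ and then choosing $b_i^0=\kappa|P_i|-\epsilon\,\dfrac{(\kappa-1)r_0^2}{2\sup_D|P|}$, which keeps $b_i^0$ strictly inside $(\alpha_{P_i},\beta_{P_i})$ while still giving $\max h(\cdot,P_1,b_1)\leq \epsilon\,\dfrac{r_0^2}{2\sup_D|P|}=\min h(\cdot,P_i,b_i^0)$. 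Your proof is repaired by the same device, or equivalently by observing that your bound $\max h(\cdot,P_1,b_1)<\dfrac{r_0^2}{2\sup_D|P|}$ is strict (since $\kappa|P_1|-b_1<\sigma$ strictly), leaving room to nudge each $b_i^0$ slightly above $\alpha_{P_i}$.
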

\begin{proof}
By Theorem \ref{thm:abstractcasediscritecaseconvexcaseBIS}, it suffices to show that 
there exists $(b^0_1, \cdots, b^0_N)$ such that $h(x,P_1,b^0_1)\le \min_{2\le i\le N}h(x,P_i,b^0_i)$ 
for all $x\in \overline \Omega$.

Rewrite $b_1^0=b_1=\kappa|P_1|-\epsilon^2\sigma$ with $0<\epsilon<1$. Choose $b_i^0=\kappa|P_i|-\epsilon \dfrac{(\kappa -1)r_{0}^2}{2\sup_{D}|P|}$, $2\leq i\leq N$.  Then from Lemma \ref{lm:estimateofovalsk>1}(b), $h(x,P_1,b^0_1)\leq \sqrt{2|P_1|}\sqrt{\dfrac{\kappa |P_1|-b^0_1}{\kappa -1}}\leq \sqrt{2|P_1|}\sqrt{\dfrac{\epsilon^2\sigma}{\kappa -1}}$. 
On the other hand, from Lemma \ref{lm:estimateofovalsk>1}(a), we have for $2\leq i\leq N$ that
\[
h(x, P_i, b_i^0) \geq \dfrac{\kappa |P_i|-b_i^0}{\kappa-1}=\dfrac{\epsilon r_0^2}{2\sup_D|P|}.
\]
By the choice of $\sigma$, $h(x, P_1, b_1^0)\leq h(x, P_i, b_i^0)$ for $2\leq i\leq N$.

\end{proof}

\subsubsection{Existence in the general case}
\begin{theorem}\label{thm:generalcaseRadonmeasurekappa>1}
Assume conditions (H3) and (H4).
Let $\mu$ be a Radon measure on $\overline{D}$, $f\in L^{1}(\Omega)$ with 
$f>0$ a.e., and satisfying the energy conservation condition
\[
\int_{\Omega} f(x)\,dx=\mu(\overline{D}).
\]
Then given $X_{0}\in Q_{r_{0}}$ with $0<|X_{0}|< 
\dfrac{\sigma}{\kappa -1}
=
\dfrac{r_{0}^{4}}{8 \left( \sup_{D}|P|\right)^{3}}$, there exists a weak solution of the near field refractor problem passing through $X_{0}$.
\end{theorem}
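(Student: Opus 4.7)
The plan is to follow the strategy of Theorem \ref{thm:existencegeneralcasekappa<1}, now in the convex setting: solve the discrete problem using Theorem \ref{thm:existencesumofdiracmasseskappa>1} and pass to a general measure via Theorem \ref{thm:general measure}. Throughout, set $x_0 = X_0/|X_0| \in \overline\Omega$.

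First, I assume $\mu = \sum_{i=1}^N g_i \delta_{P_i}$. By Theorem \ref{thm:existencesumofdiracmasseskappa>1}, for every $b_1 \in (\kappa|P_1|-\sigma,\,\kappa|P_1|)$ there is a unique $(b_2,\dots,b_N)$ such that $\rho(\cdot, b_1) = \max_{1\le i \le N} h(\cdot, P_i, b_i)$ solves the refractor problem for the discrete measure $\mu$. The formula \eqref{eq:polareqovalk>1} shows that $b \mapsto h(x,P,b)$ is strictly decreasing, so Theorem \ref{lm:partialcomparisonconvex} applies and $\rho(x, b_1)$ is strictly decreasing in $b_1$; joint continuity of $\rho$ in $(x, b_1)$ follows from the uniform Lipschitz bound of Lemma \ref{rmk:uniformlipschitzkappa>1} together with (A3')(d).

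Next I would bound the range of $b_1 \mapsto \rho(x_0, b_1)$. At the lower endpoint, $\rho(x_0, b_1) \ge h(x_0, P_1, b_1) \ge (\kappa|P_1|-b_1)/(\kappa-1) \to \sigma/(\kappa-1)$ as $b_1 \downarrow \kappa|P_1|-\sigma$, by Lemma \ref{lm:estimateofovalsk>1}(a). At the upper endpoint, I claim $\rho(\cdot, b_1) \to 0$ uniformly as $b_1 \uparrow \kappa|P_1|$. Lemma \ref{lm:estimateofovalsk>1}(b) yields $\max_x h(x, P_1, b_1) \le \sqrt{2|P_1|(\kappa|P_1|-b_1)/(\kappa-1)} \to 0$, but since $\rho$ is a \emph{maximum}, this alone does not control $\rho$. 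The key observation is that the mass balance $\mathcal M_{\mathcal S, f}(\{P_1\}) = g_1 > 0$ forces $\omega(\{x : \rho(x, b_1) = h(x, P_1, b_1)\}) > 0$; at any such $x$, $h(x, P_i, b_i) \le h(x, P_1, b_1) \to 0$ for each $i \neq 1$, and since $\min_x h(x, P_i, b_i) = (\kappa|P_i|-b_i)/(\kappa-1)$ by Lemma \ref{lm:estimateofovalsk>1}(a), one deduces $b_i \uparrow \kappa|P_i|$ and hence $\max_x h(x, P_i, b_i) \to 0$ via Lemma \ref{lm:estimateofovalsk>1}(b). Combined with monotonicity, continuity, and the hypothesis $0 < |X_0| < \sigma/(\kappa-1)$, the intermediate value theorem produces $b_1$ with $\rho(x_0, b_1) = |X_0|$.

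For a general Radon measure $\mu$, I approximate it weakly by discrete measures $\mu_\ell$ with $\mu_\ell(\overline D) = \mu(\overline D)$, and let $\rho_\ell$ be the discrete solutions constructed above, each passing through $X_0$. I then invoke Theorem \ref{thm:general measure} with $R_0 = |X_0|$: condition (i) follows from parts (a) and (b) of Lemma \ref{lm:estimateofovalsk>1}, which combine to yield $R_1^2/(2|P|) \le h(x, P, b) \le \sqrt{2|P| R_1}$ whenever $R_1 \in \mathrm{Range}(h(\cdot, P, b))$; condition (ii) is exactly the compactness assertion of Lemma \ref{lm:weakcompactnesskappa>1}; and the hypothesis $R_0 < \lim_{b \to (\kappa|P|-\sigma)^+} h(x, P, b)$ is ensured by $|X_0| < \sigma/(\kappa-1) \le (\kappa|P|-b)/(\kappa-1) \le h(x, P, b)$ again from Lemma \ref{lm:estimateofovalsk>1}(a). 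The main obstacle is the uniform-vanishing claim at the upper endpoint of the discrete step: its concave analogue in Theorem \ref{thm:existencegeneralcasekappa<1} is automatic from the min-structure, whereas here one genuinely has to propagate vanishing of the single oval at $P_1$ to all the other ovals through the mass-conservation constraint.
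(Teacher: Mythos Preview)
Your proposal is correct and follows essentially the same route as the paper: first handle the discrete case via Theorem \ref{thm:existencesumofdiracmasseskappa>1} and Theorem \ref{lm:partialcomparisonconvex}, establish the endpoint behavior of $b_1\mapsto\rho(x_0,b_1)$ (with the upper-endpoint vanishing forced by the mass constraint at $P_1$ exactly as the paper does), apply the intermediate value theorem, and then pass to a general measure with Theorem \ref{thm:general measure}. One minor slip: the left endpoint $\alpha_P$ of the oval family is $\kappa|P|-\dfrac{(\kappa-1)r_0^2}{2\sup_D|P|}$, not $\kappa|P|-\sigma$; but since $\sigma/(\kappa-1)<r_0^2/(2\sup_D|P|)$ under (H4), your inequality $|X_0|<\sigma/(\kappa-1)\le(\kappa|P|-b)/(\kappa-1)\le h(x,P,b)$ for $b$ near $\alpha_P$ still goes through.
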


\begin{proof}
We assume first that $\mu=\sum_{i=1}^{N} g_{i}\delta_{P_{i}}$, with $g_{i}>0$ and 
$P_{i}$ pairwise distinct points in $\overline D$.
From Theorem \ref{thm:existencesumofdiracmasseskappa>1}, given $b_{1}\in 
(\kappa |P_{1}|-\sigma, \kappa |P_{1}| )$ there exists a unique $(b_{2},\cdots , b_{N})$ such that 
$\mathcal S_{b_1}$, defined by the radial function $\rho(x,b_{1})
=\max_{i}h(x,P_{i},b_{i})$, is a weak solution to the near field refractor problem.
By the comparison Theorem \ref{lm:partialcomparisonconvex}, 
the function $\rho(x,b_{1})$ is decreasing in $b_1$ and continuous 
for $(x,b_{1})\in \overline{\Omega} \times (\kappa |P_{1}|-\sigma, \kappa |P_{1}| )$.
For small $\epsilon>0$, from Lemma \ref{lm:estimateofovalsk>1}(a) we have 
\begin{equation}\label{eq:estimateofrhokappa>1}
\rho(x,\kappa |P_{1}| - (\sigma-\epsilon))\geq 
h(x,P_{1}, \kappa |P_{1}|-(\sigma-\epsilon)) \geq
\dfrac{\sigma-\epsilon}{\kappa -1},\quad \forall x\in \overline{\Omega}.
\end{equation}  
On the other hand,
let $b_{i}(\epsilon)$, $2\leq i \leq N$ be the corresponding $b_{j}$'s to $b_{1}(\epsilon)=\kappa |P_{1}|-\epsilon$.
We have 
$\dfrac{\kappa |P_{i}|-b_{i}(\epsilon)}{\kappa -1}
\leq h(x,P_{i},b_{i}(\epsilon))\leq \rho(x,b_1(\epsilon))$.
Since $|\mathcal R^{-1}_{\mathcal S_{b_1(\epsilon)}}(P_{1})|>0$, there exists $x_{1}$ such that
$\rho(x_1, b_1(\epsilon)=h(x_{1},P_{1},b_1(\epsilon))
\leq \sqrt{2\sup_{D}|P|}\sqrt{\dfrac{\epsilon}{\kappa-1}}$.
Consequently,
$\kappa |P_{i}|-b_{i}(\epsilon)\leq \sqrt{2\epsilon(\kappa -1)\sup_{D}|P|}$
and therefore
\[
\rho(x,\kappa |P_{1}|-\epsilon)
=\max_{i}h(x,P_{i},b_{i}(\epsilon))
\leq \max_{i}\sqrt{2\sup_{D}|P|}\sqrt{\dfrac{\kappa |P_{i}|-b_{i}}{\kappa -1}}
\leq C\, \sqrt[4]{\epsilon}\to 0,
\]
as $\epsilon\to 0$.
Consequently, by continuity of $\rho(x,b_{1})$, given $X_{0}=|X_0|x_0\in Q_{r_{0}}$ with $0<|X_0|<\dfrac{\sigma}{\kappa-1}$,
there exists $b_{1}(X_{0})$ such that $\rho\left(x_0,b_{1}\right)=|X_{0}|$.

For the general case of a Radon measure $\mu$ in $\overline D$, 
we choose a sequence of measures $\mu_{\ell}$ such that each one is a finite combination of Dirac measures and $\mu_{\ell}\to \mu$ weakly with
$\mu_{\ell}(\overline{D})=\mu(\overline{D})$.
From the above, let $\mathcal S_{\ell}$ be the near field refractor corresponding to the measure $\mu_{\ell}$ and parameterized by
$\rho_{\ell}(x)x$ and passing through the point $X_{0}$.
Thus, $|X_0|\in \text{Range}(\rho_l)$ for all $l$.
From \eqref{eq:estimateofrhokappa>1}, $|X_0|<\lim_{b\to \alpha_P} h(x, P, b)$, where $\alpha_P=\kappa |P|-\dfrac{(\kappa-1)r_0^2}{2\sup_{D}|P|}$.
By Lemma \ref{lm:estimateofovalsk>1} and Lemma \ref{lm:weakcompactnesskappa>1}, one can apply Theorem \ref{thm:general measure} to obtain the existence of solutions.
\end{proof}


\setcounter{equation}{0}
\section{Further applications}\label{sec:furtherapplications}
To illustrate the general framework described in Section \ref{sec:abstract setup}, we briefly show how to 
recover the results for the far field refractor, proved using mass transport in \cite{gutierrez-huang:farfieldrefractor}, and also
the solution to the second boundary value problem for the Monge-Amp\`ere equation. 
We only state the results when the measure $\mu$ is a finite combination of Dirac measures. The general case for a general Radon measure follows by approximation as in Theorems \ref{thm:uniquenessabstractcase} and \ref{thm:existencegeneralcasekappa<1},
and noticing that the far field refractor problem is dilation invariant and
the second boundary value problem for Monge-Amp\`ere equation is translation invariant.

\subsection{Far field refractor, $\kappa<1$}
We have two domains $\Omega,\Omega^*\subset S^{n-1}$ satisfying the condition 
$x\cdot m\geq \kappa$ for all $x\in \Omega$, $m\in \Omega^*$, with $|\partial\Omega|=0$.
In this case, refractors are defined with supporting semi-ellipsoids $\rho(x,m,b)=\dfrac{b}{1-\kappa \,m\cdot x}$
in \cite[Definition 3.1]{gutierrez-huang:farfieldrefractor}.


%
We shall apply the setup in Section \ref{sec:abstract setup} with the spaces $X=\overline\Omega$, and $Y=\overline{\Omega^*}$.
The Radon measure $\omega$ in $\overline\Omega$ is now given by $\omega=fdx$ with 
$f\in L^1(\overline\Omega)$ nonnegative. 
If $\mathcal S$ is a far field refractor in the sense of  \cite[Definition 3.1]{gutierrez-huang:farfieldrefractor}, then it is proved in 
Lemma \ref{measurabilityrefractormapfarfield} below that the map $\Phi=\mathcal N_{\mathcal S}\in C_s(\overline\Omega, \overline{\Omega^*})$, where $\mathcal N_{\mathcal S}$ is defined in \cite[Definition 3.2]{gutierrez-huang:farfieldrefractor}.
From Lemma \ref{lm:radonmeasureontarget} we therefore obtain that
the set function
\begin{equation}\label{def:eqrefractormeasurefarfield}
\mathcal M_{\mathcal S, f}(F):=\int_{\mathcal N_{\mathcal S}^{-1}(F)}f\,dx,
\end{equation}
is a Radon measure defined on $\overline{\Omega^*}$. We call this measure 
{\it the far field refractor measure associated with $f$ and the refractor $\mathcal S$}.

We next introduce the family $\mathcal F$. Let $\mathcal S_\rho$ denote the far field refractor with defining radial function $\rho$ given by 
\cite[Definition 3.1]{gutierrez-huang:farfieldrefractor}.
We let $\mathcal F$ be the family of functions in $C^+(\overline\Omega)$ given by
$$\mathcal F=\{\rho(x): \mathcal S_\rho \text{ is a far field refractor}\}.$$ 
On $\mathcal F$ we define the mapping $\mathcal T$ by 
$$\mathcal T(\rho)=\mathcal N_{\mathcal S_\rho}.$$
To continue with the application of the results from Section \ref{sec:abstract setup}, 
we need to also show that $\mathcal T$ is continuous at each $\rho\in \mathcal F$ in the sense of Definition
\ref{def:definitionofmapTcontinuous}.
This is proved in Lemma \ref{lm:weakcompactnessfarfield} below.

\begin{lemma}\label{measurabilityrefractormapfarfield}
For each far field refractor $\mathcal S$, we have $\mathcal N_{\mathcal S}\in C_s(\overline\Omega, \overline \Omega^*)$.
\end{lemma}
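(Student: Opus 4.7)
The plan is to mirror the three-part structure of Lemma \ref{measurabilityrefractormap}, adapting each piece to the far field setting where the supporting surfaces are semi-ellipsoids $\rho(x, m, b) = b/(1 - \kappa\, m \cdot x)$ parameterized by a direction $m \in \overline{\Omega^*}$ and a scalar $b > 0$. I need to verify: (i) surjectivity of $\mathcal N_{\mathcal S}$ onto $\overline{\Omega^*}$, (ii) that $\mathcal N_{\mathcal S}(x)$ is single-valued for $\omega$-a.e.\ $x$, and (iii) continuity in the sense of Definition \ref{def:definitionofmapTcontinuous}.

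For surjectivity, given $m \in \overline{\Omega^*}$ I would set
\[
b_m = \inf\{b > 0 : \rho(x) \le \rho(x, m, b) \text{ for all } x \in \overline{\Omega}\},
\]
and argue that, because $\rho$ is continuous on the compact $\overline{\Omega}$ and $1 - \kappa\, m \cdot x$ is bounded below by $1-\kappa > 0$ on the admissible set from the condition $x \cdot m \ge \kappa$, the infimum is attained and the minimal semi-ellipsoid touches $\mathcal S$ at some point $y_m \rho(y_m)$, so $m \in \mathcal N_{\mathcal S}(y_m)$.

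For single-valuedness, I will show that if two distinct directions $m_1 \ne m_2$ lie in $\mathcal N_{\mathcal S}(x)$, then $\rho(x)x$ must be a non-smooth point of $\mathcal S$. Indeed, if $\mathcal S$ admitted a tangent hyperplane $\Pi$ at $\rho(x)x$, then $\Pi$ would be the common tangent hyperplane of both supporting semi-ellipsoids at that point, so both would share the same outward unit normal $\nu$. By Snell's law \eqref{eq:snellvectorform}, once the incoming direction $x$ and the normal $\nu$ are fixed, the refracted direction $m = (x - \lambda\nu)/\kappa$ is determined (with the scalar $\lambda$ fixed by $|m|=1$ and the physical constraint $m\cdot x \ge \kappa$), forcing $m_1 = m_2$, a contradiction. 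The radial function $\rho$ is Lipschitz on $\overline\Omega$ (an analogue of Lemma \ref{rmk:uniformlipschitz}, but much simpler since the semi-ellipsoid expression is smooth in $x$ with $1-\kappa m\cdot x$ bounded away from zero), and since $|\partial \Omega|=0$, Rademacher's theorem then gives that the singular set of $\mathcal S$ has zero surface measure, hence zero $\omega$-measure for $\omega = f\,dx$ with $f\in L^1$.

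For continuity, let $x_i \to x_0$ in $\overline\Omega$ and $m_i \in \mathcal N_{\mathcal S}(x_i)$ with supporting semi-ellipsoids $\rho(\cdot, m_i, b_i)$. Compactness of $\overline{\Omega^*}$ yields a subsequence $m_{i_k} \to m_0$. The identity $b_i = \rho(x_i)(1 - \kappa\, m_i \cdot x_i)$, together with $1-\kappa\, m_i\cdot x_i \in [1-\kappa, 1+\kappa]$ and uniform positive bounds on $\rho$, traps the $b_i$'s in a compact subinterval of $(0,\infty)$, so a further subsequence converges to some $b_0 > 0$. Passing to the limit in $\rho(x) \le \rho(x, m_i, b_i)$ and in the equality at $x_i$ yields $\rho(x) \le \rho(x, m_0, b_0)$ with equality at $x_0$, so $m_0 \in \mathcal N_{\mathcal S}(x_0)$. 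The main obstacle is the single-valuedness step, specifically the uniqueness of the refracted direction from a shared tangent plane; once this geometric rigidity from Snell's law is in hand, the rest is a routine compactness argument.
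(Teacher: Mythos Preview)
Your proposal is correct and follows essentially the same three-part structure as the paper's proof: surjectivity via the infimal supporting semi-ellipsoid, single-valuedness via Snell's law forcing a unique refracted direction at differentiable points (together with Lipschitz regularity and $|\partial\Omega|=0$), and continuity by extracting convergent subsequences of $(m_i,b_i)$ and passing to the limit. The only cosmetic difference is that the paper records the explicit formula $\lambda_1=\lambda_2=\Phi(x\cdot\nu)$ for the Snell scalar and the slightly sharper bound $b_i\le a_2(1-\kappa^2)$ (using $x\cdot m_i\ge\kappa$) rather than your $b_i\le a_2(1+\kappa)$, but this changes nothing.
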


\begin{proof}
Suppose $\mathcal S$ is parameterized by $\rho(x)$.
We first show that $\mathcal N_{\mathcal S}(\bar \Omega)=\bar \Omega^*$.  Because if $m\in \bar \Omega^*$, then letting 
\[
b_1=\inf\{b:\rho(x)\leq \rho(x,m,b)\text{ for all $x\in \bar \Omega$}\},
\]
we get that the semi ellipsoid $\rho(x,m,b_1)$ supports $\rho(x)$ at some $y\in \bar \Omega$.
Next show that $\mathcal N_{\mathcal S}(x)$ is single-valued for a.e. $x$ with respect to $\omega$.
Indeed, will prove that $\{x\in\bar \Omega: \mathcal N_{\mathcal S}(x) \text{ is not a singleton}\}\subset \{x\in \bar\Omega: \rho \text{ is not differentiable at $x$}\}$.
In fact, if $m_1,m_2\in \mathcal N_{\mathcal S}(x)$, and $\rho$ is differentiable at $x$, then 
$\rho$ has a unique supporting hyperplane $\Pi$ at $x$ having outer unit normal $\nu$.
Since the semi ellipsoids $\rho(x,m_1,b_1)$ and $\rho(x,m_2,b_2)$ support $\rho$ at $x$, the hyperplane $\Pi$ supports both semi ellipsoids at $x$, and then from the Snell law we get that
$x-\kappa \,m_1=\lambda_1\,\nu$ and $x-\kappa \,m_2=\lambda_2\,\nu$. But since $\lambda_1=\lambda_2=\Phi(x\cdot \nu)$, we get that $m_1=m_2$.
Since the graph of $\mathcal S$ is Lipschitz and $|\partial \Omega|=0$, the set of singular points of $\mathcal S$ has measure zero and therefore 
$\mathcal N_{\mathcal S}(x)$ is single-valued for a.e. $x\in \overline\Omega$.

To prove that $\mathcal N_{\mathcal S}$ is continuous, 
let $x_i\longrightarrow x_0$ and 
$m_i\in \mathcal N_{\mathcal S}(x_i)$.
Let $\rho(x,m_i,b_i)$ be a supporting semi ellipsoid to $\mathcal S$
at $\rho(x_i)x_i$. Then
\begin{equation}\label{eq:inequalityofO_{i}farfield}
\rho(x) \leq \dfrac{b_{i}}{1-\kappa\,m_i\cdot x} \qquad \text{ for }x\in \overline\Omega,
\end{equation}
with equality at $x=x_i$ and $x\cdot m_{i}\geq \kappa$ for all $x\in \overline{\Omega}$. Assume that $a_1\le \rho(x) \le a_2$
on $\overline\Omega$ for some constants $a_2\geq a_1>0$. 
From \eqref{eq:inequalityofO_{i}farfield} we then get
$a_{1}(1-\kappa)\leq b_{i}\leq a_2(1-\kappa^2)$.
Therefore selecting a subsequence we can assume that 
$m_i\longrightarrow m_0\in \overline \Omega^*$ and $b_i\longrightarrow b_0$,
as $i \longrightarrow \infty$.
By taking limit in \eqref{eq:inequalityofO_{i}farfield},
one obtains that semi ellipsoid $\rho(x,m_0,b_0)$ supports
$\mathcal S$ at $\rho(x_0)x_0$.
\end{proof}

\begin{lemma}\label{lm:weakcompactnessfarfield}
The far field refractor mapping $\mathcal T(\rho)=\mathcal N_{\mathcal S_\rho}$ is continuous at each $\rho\in \mathcal F$.
\end{lemma}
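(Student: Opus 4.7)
The plan is to mimic the argument used in Lemma \ref{lm:weakcompactness} (the near field analogue), replacing supporting ovals by the supporting semi-ellipsoids $\rho(x,m,b)=b/(1-\kappa\,m\cdot x)$. Fix $\rho\in\mathcal F$ and a sequence $\rho_j\in\mathcal F$ with $\rho_j\to\rho$ uniformly on $\overline\Omega$. Fix $x_0\in\overline\Omega$ and pick $m_j\in\mathcal T(\rho_j)(x_0)=\mathcal N_{\mathcal S_{\rho_j}}(x_0)$. By the definition of the refractor map there exists $b_j>0$ such that the semi-ellipsoid $\rho(\,\cdot\,,m_j,b_j)$ supports $\mathcal S_{\rho_j}$ at $\rho_j(x_0)x_0$, that is,
\[
\rho_j(x)\le\frac{b_j}{1-\kappa\,m_j\cdot x}\quad\text{for all }x\in\overline\Omega,
\]
with equality at $x=x_0$, and $m_j\cdot x\ge\kappa$ for all $x\in\overline\Omega$.

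First I would extract uniform bounds on $b_j$. Since $\rho_j\to\rho$ uniformly and $\rho$ is bounded away from $0$ and from $\infty$ on the compact set $\overline\Omega$, there exist constants $0<a_1\le a_2$ with $a_1\le\rho_j(x)\le a_2$ for all $x\in\overline\Omega$ and all sufficiently large $j$. Combined with $\kappa\le m_j\cdot x\le 1$ on $\overline\Omega$, evaluating the supporting inequality at $x=x_0$ (equality) and at an arbitrary $x$ gives
\[
a_1(1-\kappa)\le a_1\bigl(1-\kappa\,m_j\cdot x_0\bigr)\le b_j = \rho_j(x_0)\bigl(1-\kappa\,m_j\cdot x_0\bigr)\le a_2(1-\kappa^{2}).
\]
Thus $\{b_j\}$ is bounded in a compact subinterval of $(0,\infty)$, and $\{m_j\}\subset\overline{\Omega^*}$ is automatically precompact by compactness of $\overline{\Omega^*}$.

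Next, pass to a subsequence along which $m_{j_k}\to m_0\in\overline{\Omega^*}$ and $b_{j_k}\to b_0>0$. Since the map $(x,m,b)\mapsto b/(1-\kappa\,m\cdot x)$ is continuous on $\overline\Omega\times\overline{\Omega^*}\times(0,\infty)$ (the denominator is uniformly bounded away from $0$ because $m\cdot x\ge\kappa$ forces $1-\kappa\,m\cdot x\ge 1-\kappa^2>0$), the supporting inequality passes to the limit: for every $x\in\overline\Omega$,
\[
\rho(x)=\lim_{k\to\infty}\rho_{j_k}(x)\le\lim_{k\to\infty}\frac{b_{j_k}}{1-\kappa\,m_{j_k}\cdot x}=\frac{b_0}{1-\kappa\,m_0\cdot x},
\]
with equality at $x=x_0$ by taking the limit of the equality $\rho_{j_k}(x_0)=b_{j_k}/(1-\kappa\,m_{j_k}\cdot x_0)$. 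Hence the semi-ellipsoid $\rho(\,\cdot\,,m_0,b_0)$ supports $\mathcal S_\rho$ at $\rho(x_0)x_0$, so $m_0\in\mathcal N_{\mathcal S_\rho}(x_0)=\mathcal T(\rho)(x_0)$, verifying Definition \ref{def:definitionofmapTcontinuous}.

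The only real obstacle is the quantitative control of $b_j$ from below and above, which in the near field case required the assumptions \ref{assumption1}--\ref{assumption2} and the Cartesian-oval estimates of Section \ref{sec:ovals}; here everything is much easier because the denominator $1-\kappa\,m\cdot x$ is trivially pinched between $1-\kappa$ and $1-\kappa^2$ on $\overline\Omega\times\overline{\Omega^*}$, so the uniform bounds on $\rho_j$ translate immediately into uniform bounds on $b_j$. The rest of the argument is a direct limit passage identical in structure to Lemma \ref{lm:weakcompactness}.
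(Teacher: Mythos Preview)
Your proof is correct and follows exactly the paper's approach: the paper's own proof is a two-line sketch that refers back to Lemma \ref{measurabilityrefractormapfarfield} for the subsequence extraction, and you have simply written out those details explicitly, obtaining the same bounds $a_1(1-\kappa)\le b_j\le a_2(1-\kappa^2)$ and passing to the limit in the supporting inequality.
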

\begin{proof}
Suppose $\rho_j\longrightarrow \rho$ uniformly as $j\to \infty$. Let $x_0\in\overline\Omega$ and $m_j\in \mathcal N_{\mathcal S_{\rho_j}}(x_0)$. 
Then there exists $b_j$ such that 
$\rho_{j}(x)\leq \rho(x, m_{j},b_{j})$ for all $x\in \overline{\Omega}$ with equality 
at $x=x_{0}$ and with $x\cdot m_{j}\geq \kappa$. Selecting subsequences as in the proof of Lemma  \ref{measurabilityrefractormapfarfield}, we obtain $m_0\in \mathcal N_{\mathcal S_\rho}(x_0)$. 
\end{proof}

We therefore can apply Lemma \ref{lm:continuityofmeasuresabstractapproach} to obtain that the 
definition of refractor measure given in \eqref{def:eqrefractormeasurefarfield} is stable by uniform limits,
i.e., if $\rho_j\to \rho$ uniformly, then $\mathcal M_{\mathcal S_{\rho_j}, f}\to \mathcal M_{\mathcal S_{\rho}, f}$ weakly.

To be able to apply Theorem \ref{thm:abstractcasediscritecase}, we next need to verify that the family $\mathcal F$ and the map $\mathcal T$ satisfy conditions (A1)-(A3) from Section 
\ref{sec:abstract setup}.
Indeed, (A1) follows immediately from the definition of far field refractor.
Condition (A2) immediately follows from the definition of far field refractor.

It remains to verify (A3).
Indeed, with the notation in condition (A3) we will take 
\[
h_{t,y_0}(x)=\rho(x,m,b)
\]
with the understanding that $t=b$, and $y_0=m$, and 
$\rho(x,m,b)$ is the semi ellipsoid $E(m,b)$.
In other words, we will show that the family 
$$\left\{\rho(\cdot, m,b): m\in \bar \Omega^*, 0<b<+\infty \right\} \subset \mathcal F,$$
and verifies (A3).
Indeed, it is clear that $\rho(\cdot,m,b)$ is a far field refractor, and in particular, 
$m\in \mathcal T(\rho(\cdot, m,b))(x)$ for all $x\in \bar \Omega$, that is, (A3)(a) holds. 
%
Condition (A3)(b) is trivial. Condition (A3)(c) follows from $\rho(x,m,b)\leq \dfrac{b}{1-\kappa^2}$.
Finally, (A3)(d) follows from $|\rho(x,m,b')-\rho(x,m,b)|\leq \dfrac{|b'-b|}{1-\kappa^2}$.

%

The notion of weak solution is introduced through conservation
of energy.

\begin{definition}\label{def:definitionoffarfieldrefractorkappa<1}
A far field refractor $\mathcal{S}$ is a weak solution of the far field refractor problem for the case $\kappa<1$ with emitting illumination intensity
$f(x)$ on $\overline\Omega$ and prescribed refracted illumination
intensity $\mu$ on $\overline{D}$ if for any Borel set
$F\subset \overline{D}$
\begin{equation}
\mathcal M_{\mathcal S, f}(F)
=\int_{\mathcal N_{\mathcal S}^{-1}(F)}f\,dx=\mu(F).
\end{equation}
\end{definition}

We are now ready to apply Theorem \ref{thm:abstractcasediscritecase} to solve the far field refractor problem when the measure $\mu$ is a linear combination of deltas.

\begin{theorem}\label{thm:existencesumofdiracmassesfarfield}
Let $m_{1},\cdots , m_{N}$ be distinct points in $\overline{\Omega^*}$,  
$g_{1},\cdots , g_{N}$ are positive numbers, $f\in L^{1}(\Omega)$ such that $x\cdot m_i\geq \kappa$ for $x\in \Omega$, $1\leq i\leq N$, and
\begin{equation}\label{eq:conservationofenergykappa<1farfield}
\int_{\overline{\Omega}} f(x)\,dx=\sum_{i=1}^{N} g_{i}.
\end{equation}
Then 
there exist positive numbers $b_1,b_{2},\cdots , b_{N}$ such that 
$
\mathcal S= \{\rho(x)x: x\in \overline{\Omega}\}
$ with
\begin{equation}\label{eq:definitionoffarfieldpoliellipsoid}
\rho(x)=\min_{1\leq i \leq N} \rho(x,m_{i},b_{i})
\end{equation}
is a weak solution to the far field refractor problem.
Moreover, $\mathcal M_{\mathcal S, f}\left(\{m_{i}\}\right)=g_{i}$ for $1\leq i \leq N$.
\end{theorem}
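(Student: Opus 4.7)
The plan is to apply the abstract concave existence result, Theorem \ref{thm:abstractcasediscritecase}, with $X=\overline{\Omega}$, $Y=\overline{\Omega^*}$, the Radon measure $\omega=f\,dx$ on $X$, and the family $\mathcal{F}=\{\rho:\mathcal{S}_\rho \text{ is a far field refractor}\}$ equipped with the mapping $\mathcal{T}(\rho)=\mathcal{N}_{\mathcal{S}_\rho}$. Most of the hypotheses have already been checked in the discussion preceding the statement: Lemmas \ref{measurabilityrefractormapfarfield} and \ref{lm:weakcompactnessfarfield} give that $\mathcal{T}$ takes values in $C_s(X,Y)$ and is continuous at each $\phi\in\mathcal{F}$, while (A1), (A2), and (A3)(a)--(d) were verified in the paragraphs following Lemma \ref{lm:weakcompactnessfarfield} by taking $h_{t,m}(x)=\rho(x,m,b)=b/(1-\kappa\, m\cdot x)$ for $0<b<+\infty$.

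The only remaining ingredient is producing the initial function $\rho_0$ required by Theorem \ref{thm:abstractcasediscritecase}. I will invoke Remark \ref{rmk:existenceofrho0forb1close}: since $\alpha_{m_i}=0$ in our parametrization and $h_{t,m_i}\to 0$ uniformly as $t\to 0^+$, I can pick any $\tau_i>0$ for $2\le i\le N$, note that $h_{\tau_i,m_i}(x)\ge C_i>0$ uniformly on $\overline{\Omega}$ because $x\cdot m_i\ge \kappa$ keeps $1-\kappa\,m_i\cdot x$ bounded away from zero, and then choose $b_1^0>0$ sufficiently small so that $h_{b_1^0,m_1}(x)\le \min_{2\le i\le N}C_i$ on $\overline{\Omega}$. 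Setting $b_i^0=\tau_i$, we obtain $\rho_0=\min_{1\le i\le N}h_{b_i^0,m_i}=h_{b_1^0,m_1}$, and Remark \ref{rmk:Mofhequalsdelta} gives $M_{\mathcal{T}(\rho_0)}=\omega(X)\,\delta_{m_1}$, so the hypothesis $\mathcal{M}_{\mathcal{T}(\rho_0)}(m_i)=0\le g_i$ for $2\le i\le N$ is trivially met.

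With this in place, Theorem \ref{thm:abstractcasediscritecase} applied with $b_1=b_1^0$ yields $b_2,\dots,b_N\in(0,+\infty)$ such that $\rho(x)=\min_{1\le i\le N}h_{b_i,m_i}(x)=\min_{1\le i\le N}\rho(x,m_i,b_i)$ satisfies $M_{\mathcal{T}(\rho)}=\sum_{i=1}^{N}g_i\,\delta_{m_i}$. Unwinding the definitions, this is precisely $\mathcal{M}_{\mathcal{S},f}(\{m_i\})=g_i$ for $1\le i\le N$, and together with the energy conservation \eqref{eq:conservationofenergykappa<1farfield} it shows $\mathcal{S}$ is a weak solution in the sense of Definition \ref{def:definitionoffarfieldrefractorkappa<1}.

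There is essentially no genuine obstacle, since the far field semi-ellipsoids behave much more simply than Cartesian ovals: the formula $\rho(x,m,b)=b/(1-\kappa\,m\cdot x)$ is linear in $b$, making (A3)(b)--(d) immediate and the continuity arguments in Lemmas \ref{measurabilityrefractormapfarfield}--\ref{lm:weakcompactnessfarfield} straightforward. The only spot that requires a moment of care is the choice of $b_1^0$, which must be small enough to guarantee $\rho_0$ is realized by the single ellipsoid $h_{b_1^0,m_1}$; this is handled by the standard uniform lower bound on $h_{\tau_i,m_i}$ coming from $x\cdot m_i\ge\kappa$ together with (A3)(c).
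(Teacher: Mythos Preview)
Your proof is correct and follows essentially the same approach as the paper: apply Theorem \ref{thm:abstractcasediscritecase} using the setup already verified in Lemmas \ref{measurabilityrefractormapfarfield}--\ref{lm:weakcompactnessfarfield} and the (A1)--(A3) checks, and obtain the initial $\rho_0$ via Remark \ref{rmk:existenceofrho0forb1close} by choosing $b_1^0$ close to zero. The paper's proof is a two-line invocation of exactly these ingredients; your version simply unpacks the content of Remark \ref{rmk:existenceofrho0forb1close} explicitly.
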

\begin{proof}
To prove the theorem, we apply Theorem \ref{thm:abstractcasediscritecase}. So we only need to verify that there exists 
$\rho_0(x)=\min_{1\leq i \leq N} \rho(x,m_{i},b^0_{i})$ satisfying
$\mathcal M_{\mathcal S_{\rho_0},f}(m_i)\le g_i$, $2\leq i\leq N$.
From Remark \ref{rmk:existenceofrho0forb1close} this follows by choosing $b_1$ close to zero.

\end{proof}

\subsection{Far field case, $\kappa>1$}\label{subsect:farfieldkappa>1}

Using the results from Subsection \ref{subsect:convexcase} and adapting the above arguments we can easily deal with the case $\kappa>1$.
In this case, we assume $x\cdot m\geq \dfrac{1}{\kappa}+\delta$, $|\partial\Omega|=0$, and the definition of far field refractor is made with 
supporting semi-hyperboloids $\rho(x,m,b)=\dfrac{b}{\kappa \,m\cdot x-1}$ as in \cite[Definition 4.1]{gutierrez-huang:farfieldrefractor}.
The far field refractor mapping of $\mathcal S$ is given by \cite[Definition 4.2]{gutierrez-huang:farfieldrefractor}.
The family $\mathcal F$ is then given by $\mathcal F=\{\rho:\text{$S_\rho$ is a far field refractor for $\kappa>1$}\}$.
We have 
$$
\left\{\rho(\cdot, m,b): m\in \bar \Omega^*, 0<b<+\infty \right\} \subset \mathcal F,
$$ 
where the functions $\rho(\cdot, m,b)$ now satisfying conditions (A1')-(A2') and (A3'') from Subsection \ref{subsect:convexcase}.
Weak solutions of the far field refractor problem for $\kappa>1$ are defined as in Definition \ref{def:definitionoffarfieldrefractorkappa<1}.

Therefore, applying Theorem \ref{thm:abstractcasediscritecaseconvexcase}, 
we obtain the following theorem.

\begin{theorem}\label{thm:existencesumofdiracmassesfarfieldkappa>1}
Let $m_{1},\cdots , m_{N}$ be distinct points in $\overline{\Omega^*}$,  
$g_{1},\cdots , g_{N}$ are positive numbers, $f\in L^{1}(\Omega)$ such that $x\cdot m_i\geq \dfrac{1}{\kappa}+\delta$ for $x\in \Omega$, $1\leq i\leq N$, and
\begin{equation}\label{eq:conservationofenergykappa>1farfield}
\int_{\overline{\Omega}} f(x)\,dx=\sum_{i=1}^{N} g_{i}.
\end{equation}
Then 
there exist positive numbers $b_1,b_{2},\cdots , b_{N}$ such that 
$
\mathcal S= \{\rho(x)x: x\in \overline{\Omega}\}
$ with
\begin{equation}\label{eq:definitionoffarfieldpoliellipsoid}
\rho(x)=\max_{1\leq i \leq N} \rho(x,m_{i},b_{i})
\end{equation}
is a weak solution to the far field refractor problem for $\kappa>1$.
Moreover, $\mathcal M_{\mathcal S, f}\left(\{m_{i}\}\right)=g_{i}$ for $1\leq i \leq N$.
\end{theorem}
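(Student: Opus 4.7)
The plan is to apply Theorem~\ref{thm:abstractcasediscritecaseconvexcase} with $X=\overline\Omega$, $Y=\overline{\Omega^*}$, $\omega=f\,dx$, $p_i=m_i$, $\mathcal F=\{\rho:\mathcal S_\rho\text{ is a far field refractor for }\kappa>1\}$, and $\mathcal T(\rho)=\mathcal N_{\mathcal S_\rho}$. The building blocks will be $h_{t,y_0}(x):=\rho(x,y_0,t)=\dfrac{t}{\kappa\,y_0\cdot x-1}$, with the admissible parameter interval $(\alpha_{y_0},\beta_{y_0})=(0,+\infty)$.

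First I would establish the prerequisites of the abstract framework, namely that $\mathcal N_{\mathcal S}\in C_s(\overline\Omega,\overline{\Omega^*})$ and that $\mathcal T$ is continuous at each $\rho\in\mathcal F$. Surjectivity of $\mathcal N_{\mathcal S}$ follows by choosing, for each $m\in\overline{\Omega^*}$, $b_1=\sup\{b:\rho(x,m,b)\leq\rho(x)\text{ on }\overline\Omega\}$, so that the semi-hyperboloid $\rho(x,m,b_1)$ touches $\mathcal S$ from below at some $y$. Single-valuedness almost everywhere comes from the Lipschitz character of $\rho$ together with the Snell law: two distinct directions $m_1,m_2$ at the same tangent hyperplane would force $\kappa m_1-x=\kappa m_2-x$, contradicting the uniqueness of the normal direction. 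Continuity of $\mathcal T$ reduces to extracting subsequential limits of supporting semi-hyperboloids $\rho(x,m_j,b_j)$; the bounds $a_1\leq \rho\leq a_2$ combined with $\kappa\,m_j\cdot x-1\geq \kappa\delta$ yield $a_1\kappa\delta\leq b_j\leq a_2(\kappa-1)$, making $\{b_j\}$ and $\{m_j\}$ precompact, with the limiting semi-hyperboloid supporting $\mathcal S_\rho$ at the limit point. These are direct adaptations of Lemmas~\ref{measurabilityrefractormapfarfield} and \ref{lm:weakcompactnessfarfield} with max in place of min.

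Next I would verify (A1'), (A2'), (A3''). Condition (A1') holds because the pointwise maximum of two far field refractors is again a far field refractor. Condition (A2') is immediate: if $\phi_1(x_0)\geq\phi_2(x_0)$ and $\rho(\cdot,m,b)$ supports $\phi_1$ from below at $x_0$, then it also supports $\phi_1\vee\phi_2$ from below at $x_0$, so $m\in\mathcal T(\phi_1\vee\phi_2)(x_0)$. For (A3''), (a) holds because every semi-hyperboloid refracts all rays to its focus $y_0$; (b) $h_{t,y_0}$ is manifestly increasing in $t$; (c) uses $h_{t,y_0}(x)\geq t/(\kappa-1)\to+\infty$ uniformly on $\overline\Omega$ as $t\to+\infty$; (d) follows from $|h_{t',y_0}(x)-h_{t,y_0}(x)|\leq |t'-t|/(\kappa\delta)$.

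Finally I need to produce the initial function $\rho_0=\max_{1\leq i\leq N}h_{b_i^0,m_i}$ with $\mathcal M_{\mathcal T(\rho_0)}(m_i)\leq g_i$ for $i\geq 2$. Following the pattern of Remark~\ref{rmk:existenceofrho0forb1close} adapted to the convex case: fix any $b_i^0\in(0,+\infty)$ for $i\geq 2$, let $C_i=\max_{\overline\Omega}h_{b_i^0,m_i}<\infty$, and by (A3'')(c) choose $b_1^0$ large enough that $\min_{\overline\Omega}h_{b_1^0,m_1}\geq\max_{2\leq i\leq N}C_i$. Then $\rho_0\equiv h_{b_1^0,m_1}$, so in view of the analogue of Remark~\ref{rmk:Mofhequalsdelta} we get $\mathcal M_{\mathcal T(\rho_0)}=\omega(\overline\Omega)\,\delta_{m_1}$ and hence $\mathcal M_{\mathcal T(\rho_0)}(m_i)=0\leq g_i$ for $2\leq i\leq N$. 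Theorem~\ref{thm:abstractcasediscritecaseconvexcase} then produces the desired $(b_2,\ldots,b_N)$. The main obstacle is essentially bookkeeping rather than substance: checking the continuity/$C_s$ properties in the $\kappa>1$ geometry, since the asymptotic behavior at the opening angle $x\cdot m=1/\kappa$ is different from the $\kappa<1$ ellipsoidal case, but the strict inequality $x\cdot m\geq 1/\kappa+\delta$ keeps every relevant quantity uniformly bounded.
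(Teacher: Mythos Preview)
Your proposal is correct and follows essentially the same route as the paper: Subsection~\ref{subsect:farfieldkappa>1} sets up exactly this application of Theorem~\ref{thm:abstractcasediscritecaseconvexcase} with the semi-hyperboloids $\rho(x,m,b)=b/(\kappa\,m\cdot x-1)$ as building blocks satisfying (A1'), (A2'), (A3''), and the theorem is stated as a direct consequence. You have in fact supplied more detail than the paper does, in particular the explicit construction of $\rho_0$ (choosing $b_1^0$ large so that $\rho_0\equiv h_{b_1^0,m_1}$), which the paper leaves implicit.
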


\subsection{The second boundary value problem for the Monge-Amp\`ere equation}

We assume here that $X=\bar \Omega$ and $Y=\bar \Omega^*$ with $\Omega,\Omega^*$ bounded convex domains in $\R^n$.
We have a Radon measure $\omega$ in $\bar \Omega$ given by $w=f\,dx$ with $f\in L^1(\Omega)$, $f$ nonnegative.
The subdifferential of the function $u:\overline \Omega\to \R$ is given by
\[
\partial u(x_0)=\{p\in \bar \Omega^*: u(x)\geq u(x_0)+p\cdot (x-x_0) \text{ for all $x\in \overline \Omega$}\}.
\]
We let the family $\mathcal F=\{u\in C(\bar \Omega): u \text{ is convex and $\partial u(y)\cap \overline{\Omega^*}\neq \emptyset$ $\forall y\in \overline \Omega$}\}$.
On $\mathcal F$ we define the mapping $\mathcal T$ by
$
\mathcal T(u)=\partial u.
$
One needs to prove that $\partial u\in C_s(\bar \Omega,\bar \Omega^*)$ and the map
$\mathcal T(u)=\partial u$ is continuous at each $u\in \mathcal F$.
To show $\partial u(\bar \Omega)=\bar \Omega^*$, we proceed exactly as at the beginning of the proof of
Lemma \ref{measurabilityrefractormapfarfield}. Everything else follows from well known properties of the subdifferential,
see \cite{Gut:book}.

The family $\mathcal F$ satisfies (A1') and (A2'), and we verify that it also satisfies (A3''), all from Subsection \ref{subsect:convexcase}.
We let 
$
h_{t,y_0}(x)=x\cdot p+b,
$
where $t=b$ and $y_0=p$. Here $x\in \bar \Omega$ and $p\in \bar \Omega^*$.
We show that 
\[
\{x\cdot p+b: p\in \bar \Omega^*, -\infty<b<\infty\}\subset \mathcal F
\]
satisfies condition (A3'').
In fact, $p\in \partial (x\cdot p+b) (x)$ for all $x\in \bar \Omega$, so (A3'')(a) holds.
(A3'')(b) is trivial. Since $x\cdot p+b\geq b-|x|\,|p|\geq b-C$, where $C$ depends on the diameters of $\Omega$ and $\Omega^*$, (A3'')(c) follows.
(A3'')(d) trivially holds.

We are now ready to apply Theorem \ref{thm:abstractcasediscritecaseconvexcase} 
to solve the second boundary value problem for the Monge-Amp\`ere equation when the measure $\mu$ is a linear combination of deltas.
That is, to find a convex function $u\in \mathcal F$ such that $\partial u(\bar \Omega)=\bar \Omega^*$ and solving
$\int_{(\partial u)^{-1}(E)}f(x)\,dx=\mu(E)$ for each Borel set $E\subset \bar \Omega^*$.

\begin{theorem}\label{thm:existencesumofdiracmassessecondbdryMA}
Let $p_{1},\cdots , p_{N}$ be distinct points in $\overline{\Omega^*}$,  
$g_{1},\cdots , g_{N}$ are positive numbers, $f\in L^{1}(\Omega)$,
 and
\begin{equation}\label{eq:conservationofenergysecondbdryMA}
\int_{\overline{\Omega}} f(x)\,dx=\sum_{i=1}^{N} g_{i}.
\end{equation}
Then 
there exist positive numbers $b_1,b_{2},\cdots , b_{N}$ such that the convex function
\begin{equation}\label{eq:solutionsecondbdryMA}
u(x)=\max_{1\leq i \leq N}\{ x\cdot p_i+b_i\}
\end{equation}
solves the second boundary value problem for the Monge-Amp\`ere equation.
\end{theorem}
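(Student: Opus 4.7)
The plan is to apply Theorem \ref{thm:abstractcasediscritecaseconvexcase} directly. The setup in the paragraphs preceding the statement has already exhibited the spaces $X=\overline\Omega$, $Y=\overline{\Omega^*}$, the measure $\omega=f\,dx$, the class $\mathcal F=\{u\in C(\overline\Omega): u \text{ convex}, \partial u(y)\cap \overline{\Omega^*}\neq\emptyset\ \forall y\in\overline\Omega\}$, the map $\mathcal T(u)=\partial u$, and the building blocks $h_{t,p}(x)=x\cdot p+t$; it has also verified that $\partial u\in C_s(\overline\Omega,\overline{\Omega^*})$, that $\mathcal T$ is continuous at each $u\in\mathcal F$, and that conditions (A1'), (A2'), (A3'') hold. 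Since $\int_{\overline\Omega} f\,dx = \sum_{i=1}^N g_i=\omega(X)$, the energy conservation \eqref{eq:conservationofenergyomegabis} is satisfied, so the only remaining hypothesis to check is the existence of an initial $\rho_0 = \max_{1\leq i\leq N} h_{b_i^0,p_i}$ with $\mathcal M_{\mathcal T(\rho_0)}(p_i)\leq g_i$ for $2\leq i\leq N$.

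To construct $\rho_0$, I set $b_i^0=0$ for $i\geq 2$ and choose $b_1^0$ large enough that $x\cdot p_1+b_1^0 > x\cdot p_i$ for every $x\in\overline\Omega$ and every $i\geq 2$; this is possible because $\overline\Omega$ and the points $p_i$ are bounded. Then $\rho_0(x)=x\cdot p_1+b_1^0$ is affine, hence differentiable on the interior of $\Omega$ with gradient $p_1$, so $\partial\rho_0(x)=\{p_1\}$ for every $x\in\Omega$. Since $\Omega$ is a bounded convex set, $|\partial\Omega|=0$, and therefore $(\partial\rho_0)^{-1}(\{p_i\})\subset \partial\Omega$ has $\omega$-measure zero for every $i\geq 2$, yielding $\mathcal M_{\mathcal T(\rho_0)}(\{p_i\})=0\leq g_i$.

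With this initial function in hand, Theorem \ref{thm:abstractcasediscritecaseconvexcase} produces real numbers $b_2,\ldots,b_N$ such that, setting $b_1=b_1^0$, the function $u(x)=\max_{1\leq i\leq N}\{x\cdot p_i+b_i\}$ satisfies $\mathcal M_{\mathcal T(u)}=\sum_{i=1}^N g_i\,\delta_{p_i}$. Unwinding the definitions, this reads
\[
\int_{(\partial u)^{-1}(E)} f(x)\,dx = \sum_{p_i\in E} g_i
\]
for every Borel $E\subset \overline{\Omega^*}$, which is the weak formulation of the second boundary value problem $\det D^2 u\,dx = f\,dx$ on $\Omega$ with $\partial u(\overline\Omega)=\overline{\Omega^*}$ for the discrete target $\mu=\sum g_i\delta_{p_i}$. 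The function $u$ is automatically convex as a maximum of affine functions, and lies in $\mathcal F$ since each slope $p_i$ belongs to $\overline{\Omega^*}$.

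Because all of the measurability, continuity, and monotonicity bookkeeping has been encapsulated in the abstract machinery of Section \ref{sec:abstract setup}, there is no substantial obstacle here; the whole argument reduces to exhibiting the trivial affine initial majorant. The only point worth double-checking is that the subdifferential calculus on $\overline\Omega$ is compatible with the abstract hypotheses — specifically that $\partial \rho_0$ fails to be single-valued only on $\partial\Omega$, which has $\omega$-measure zero — and this is immediate from the convexity of $\Omega$ and the affine form of $\rho_0$.
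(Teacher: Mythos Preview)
Your proof is correct and follows the same route the paper indicates: apply Theorem \ref{thm:abstractcasediscritecaseconvexcase} after the preceding discussion has verified (A1'), (A2'), (A3''). The paper gives no explicit proof beyond ``we are now ready to apply Theorem \ref{thm:abstractcasediscritecaseconvexcase}''; your construction of the initial $\rho_0$ as a single dominating affine function $h_{b_1^0,p_1}$ is exactly what is needed (and is the analogue of Remark \ref{rmk:existenceofrho0forb1close} in this setting). One cosmetic point: the statement asks for \emph{positive} $b_i$, whereas Theorem \ref{thm:abstractcasediscritecaseconvexcase} only yields $b_i\in(-\infty,\infty)$; this is harmless since replacing $u$ by $u+C$ leaves $\partial u$ unchanged, so one may shift all $b_i$ by a large constant at the end.
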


\section{Appendix: Derivation of the pde}\label{sec:derivationofpde}
\footnotesize
Suppose the function defining the refractor is $\rho(x_{1},\cdots , x_{n-1},x_{n})$ and
set $x'=(x_{1},\cdots , x_{n-1})$. We have points $(x',x_{n})\in \Omega\subset S^{n-1}$,
so we think of the region $\Omega$ defined by $\{(x',\sqrt{1-|x'|^{2}}):x'\in U\}$ and therefore we identify $\Omega$ with $U$. We also think of the defining function 
$\rho$ as  a function $\rho=\rho(x')$ with $x'\in U$.
For the derivation of the equation we assume that $\rho$ is $C^2$.
Following the paper \cite{karakhanyan-wang:nearfieldreflector}, we use the notation 
$D\rho=(\partial_{1}\rho,\cdots ,\partial_{n-1}\rho)$ and $\hat D\rho=
(D\rho,0)$. We also use the notation $x=(x_{1},\cdots ,x_{n})\in S^{n-1}$ and
let $y\in S^{n-1}$ be the refracted direction of the ray $x$ by the surface $\rho(x)x$, that is,
\begin{equation}\label{eq:formulaforY}
y=\dfrac{1}{\kappa}\left( x- \Phi(x\cdot \nu)\nu\right),
\end{equation}
where $\nu$ is the outer unit normal to the refractor at the point $\rho(x)x$, and
$\Phi(t)=t-\kappa \sqrt{1-\kappa^{-2}(1-t^{2})}$.

\subsection{Case when the target domain $D\subset \{x_{n}=0\}$.}\label{subsec:pdewhenDiscontainedinahyperplane}
Notice that this is compatible with hypotheses H1 and H2 if $\kappa<1$ or H3 and H4 if $\kappa>1$, when $\Omega$ is
above or near the hyperplane $x_n=0$.
Suppose the surface refracts off the ray with direction $x$ into the point $Z\in D$.
Then
\[
Z=\rho(x)x+ |Z-\rho(x)x|y.
\]
We denote by $T$ the map $x\mapsto Z$ and we regard it defined in $U$, that is,
$T:U\to D$, where $D$ is the target screen.
Since $D\subset \{x_n=0\}$, we have that $T(x')=(z_1,\cdots ,z_{n-1},0)$, and 
the Jacobian of $T$ is then the matrix
$DZ=\left(\partial_j z_i \right)_{ij}$, $1\leq i,j\leq n-1$.
If $d S_\Omega$ and $dS_D$ denote the surface area elements in $\Omega$ and in $D$, respectively, then $\det DZ=\dfrac{dS_D}{dS_\Omega}$.
Noticing that $dS_\Omega=\dfrac1{\sqrt{1-|x'|^2}}dS_U$, and since $f$ and $g$ are the energy distributions in $\Omega$ and $D$, respectively, we obtain the equation
\begin{equation}\label{eq:firstpde}
\det \,DZ=\dfrac{f}{g\sqrt{1-|x'|^2}}.
\end{equation}
We now find the explicit form of $DZ$ which will yield the pde satisfied by $\rho$.

From \cite[Formula (2.15)]{karakhanyan-wang:nearfieldreflector} we have the following expression for the outer normal (the change in the sign is due to the 
direction of the normal):
\begin{equation}\label{eq:formulafornormal}
\nu=\dfrac{-\hat D\rho+x\left(\rho(x')+D\rho(x')\cdot x'\right)}{\sqrt{\rho^{2}+|D\rho|^{2}-(D\rho\cdot x')^{2}}},
\end{equation}
and so
\[
x\cdot \nu=
\dfrac{\rho}{\sqrt{\rho^{2}+|D\rho|^{2}-(D\rho\cdot x')^{2}}}.
\]
We now calculate $d=|Z-\rho(x')x|$. Since $D\subset \{x_{n}=0\}$, we have
$0=\rho(x')x_{n}+d y_{n}$ and so $d=-\rho(x')\dfrac{x_{n}}{y_{n}}$.
Also 
\begin{align*}
y_{n}&=\dfrac{1}{\kappa}\left( x_{n}-\Phi(x\cdot \nu)\nu_{n}\right)\\
&=\dfrac{1}{\kappa}\left( x_{n}-\Phi(x\cdot \nu)\dfrac{x_{n}\left(\rho+D\rho\cdot x'\right)}{\sqrt{\rho^{2}+|D\rho|^{2}-(D\rho\cdot x')^{2}}}\right)\\
&=
x_{n}
\dfrac{1}{\kappa}\left(1- \Phi(x\cdot \nu)\dfrac{\rho+D\rho\cdot x'}{\sqrt{\rho^{2}+|D\rho|^{2}-(D\rho\cdot x')^{2}}}\right)\\
&=x_{n}\,
\dfrac{A}{\kappa},
\end{align*}
so
\[
d=-\rho \, \dfrac{\kappa}{A}.
\]
From \eqref{eq:formulaforY} we then have
\[
y=\dfrac{1}{\kappa}\left( A\, x +\Phi(x\cdot \nu) \dfrac{\hat D\rho}{\sqrt{\rho^{2}+|D\rho|^{2}-(D\rho\cdot x')^{2}}}\right). 
\]
Therefore
\begin{align*}
Z&=\rho(x)x-\rho\dfrac{\kappa}{A} y
=
-\dfrac{\rho}{A}\dfrac{\Phi(x\cdot \nu)}{\sqrt{\rho^{2}+|D\rho|^{2}-(D\rho \cdot x')^{2}}}\hat D\rho\\
&=
\rho \dfrac{\Phi(x\cdot \nu)}{-G + \Phi(x\cdot \nu) \left(\rho+D\rho\cdot x'\right)}\hat D\rho\\
&=
F(x',\rho(x'),D\rho(x'))\hat D (\rho^2),
\end{align*}
where 
\[
F(x',\rho(x'),D\rho(x'))=
\dfrac12  \dfrac{\Phi(x\cdot \nu)}{-G + \Phi(x\cdot \nu) \left(\rho+D\rho\cdot x'\right)}
\]
with $G=\sqrt{\rho^{2}+|D\rho|^{2}-(D\rho \cdot x')^{2}}$.
It is convenient to use the notation
\[
F(x',u,p)
=\dfrac12
\dfrac{\Phi \left( \dfrac{u}{\sqrt{u^2+|p|^2-(p\cdot x')^2}} \right)}{-\sqrt{u^2+|p|^2-(p\cdot x')^2}+
(u+p\cdot x')\,\Phi\left( \dfrac{u}{\sqrt{u^2+|p|^2-(p\cdot x')^2}} \right)}.
\]
So $z_i=F(x',\rho(x'),D\rho(x')) (\rho^2)_{x_i}$ for $1\leq i\leq n-1$ and $z_n=0$. Differentiating this with respect 
to $x_j$  we get
\begin{align*}
\partial_j z_i
&=
F\,(\rho^2)_{x_ix_j}
+
(\rho^2)_{x_i}\left( F_{x_j}+F_u \,\rho_{x_j}+\sum_{k=1}^{n-1} F_{p_k} \, \rho_{x_kx_j}\right)\\
&=
2 F\, \rho \, \rho_{x_i x_j}
+
2 F\, \rho_{x_i} \, \rho_{x_j}+
(\rho^2)_{x_i}\left( F_{x_j}+F_u \,\rho_{x_j}+\sum_{k=1}^{n-1} F_{p_k} \, \rho_{x_kx_j}\right).
\end{align*}
If $\eta,\xi$ are row vectors in $\mathbf{R}^n$, the tensor product is the $n\times n$ matrix defined by
\[
\xi \otimes \eta=\xi^t \eta,
\]
with the multiplication of matrices. 
Then the Jacobian matrix $DZ=(\partial_j z_i)$ can be written as
\begin{align*}
DZ=
2 \rho \,F \,D^2\rho+2 F D\rho\otimes D\rho +
D(\rho^2) \otimes D_{x'} F
 +
F_u \,D(\rho^2) \otimes D\rho
+
D(\rho^2) \otimes \left(D_pF\,D^2\rho \right).
\end{align*}
If $\xi, \eta$ are row vectors and $A$ is an $n\times n$ matrix, then
$\xi \otimes (\eta A)=(\xi\otimes \eta)A$, and we then obtain the formula
\begin{align*}
DZ
=2 \rho \left\{ F I+  D\rho \otimes D_pF \right\} D^2\rho 
+
2 F D\rho\otimes D\rho +
D(\rho^2) \otimes D_{x'}F
+
F_u \,D(\rho^2)\otimes D\rho.
\end{align*}
We have from the Sherman-Morrison formula
that if $\mathcal M= I+\xi\otimes \eta$, with $\xi$ and $\eta$ row vectors, then,
\begin{equation}\label{eq:formulaformatrices}
\det \,\mathcal M=1+\xi\cdot \eta,
\text{ and }
\mathcal M^{-1}=I-\dfrac{\xi\otimes \eta}{1+\xi\cdot \eta}.
\end{equation}
Therefore, if 
\begin{equation}\label{eq:definitionofmathcalM}
 \mathcal M=I+ \dfrac{1}{ F(x',\rho,D\rho)}D\rho\otimes D_pF ,
 \end{equation}
then
\begin{equation}\label{eq:defofM}
\mathcal M^{-1}
=
I-\dfrac{D\rho\otimes D_pF}{F(x,\rho,D\rho)+D\rho \cdot D_pF}.
\end{equation}
Hence
\begin{align*}\label{eq:formulaforDz}
DZ=
2\rho F \mathcal M D^2\rho
+
2 F D\rho\otimes D\rho +
D(\rho^2)\otimes D_{x'}F 
+
F_u \,D(\rho^2)\otimes D\rho 
=2\rho F\mathcal M\,D^2\rho
+B,
\end{align*}
and so
\[
\dfrac{1}{2\rho F} \,\mathcal M^{-1} \,DZ=D^2\rho+\dfrac{1}{2\rho F}
\,\mathcal M^{-1}\,B.
\]
We have $\det \mathcal M^{-1}=\dfrac{F}{F+D\rho\cdot D_pF}$.
Therefore
\[
\det DZ=\det\left(D^2\rho+\dfrac{1}{2\rho F}
\,\mathcal M^{-1}\,B\right)\,(2\rho)^{n-1} F^{n-2}
\left(F+D\rho \cdot D_pF \right).
\]
Since $(\alpha \otimes \beta)(\xi\otimes \eta)
=(\beta\cdot \xi)(\alpha\otimes \eta)$, from the form of $\mathcal M^{-1}$ and $B$ we have that
\[
\mathcal M^{-1}B
=
\dfrac{2F}{F+D\rho \cdot D_p F}\left[ (F+\rho F_u) D\rho\otimes D\rho
+
\rho D\rho\otimes D_{x'}F \right].
\]
Combining this with \eqref{eq:firstpde}, we obtain that $\rho$ satisfy the following
pde of Monge-Amp\`ere type:
\begin{align}\label{eq:pdeforrho}
\det\left(D^2\rho+\mathcal A(x',\rho,D\rho)\right) =
\dfrac{f}{g\sqrt{1-|x'|^2}\,(2\rho)^{n-1} F^{n-2}
\left(F+D\rho \cdot D_pF \right)},
\end{align}
where 
\[
\mathcal A(x',\rho,D\rho)=
\dfrac{1}{\rho (F+D\rho \cdot D_p F)} \left[ (F+\rho F_u) D\rho\otimes D\rho
+
\rho  D\rho \otimes D_{x'}F \right].
\]

\subsection{Case when $D$ is contained in a hypersurface}
We assume the target domain $D$ is contained in a hypersurface $\{P\in \R^n:\psi(P)=0\}$, where $\psi$ is $C^1$ and
$\psi_{P_n}\neq 0$. We recall that we assume the configuration given by either (H1) and (H2), if $\kappa<1$, or
(H3) and (H4), if $\kappa>1$.
In order to find the pde in the general case we will use the calculations from Subsection \ref{subsec:pdewhenDiscontainedinahyperplane}.

Given the direction $x\in \Omega$,
the point $\rho(x)x$ is refracted with direction $y$ into the point $Z\in D$. 
The normal at $Z=(z_1,\cdots ,z_n)\in D$ is given by 
$D\psi$ and so the map $T:x\mapsto Z$ has Jacobian matrix $J$ satisfying
\[
\det J=\dfrac1{|D\psi|} \det \left[ 
\begin{matrix}
\partial_1 z_1 & \cdots & \partial_{n-1}z_1 & \psi_{P_1} \\
\partial_1 z_2 & \cdots & \partial_{n-1}z_2 & \psi_{P_2} \\
\vdots & \vdots & \vdots & \vdots\\
\partial_1 z_n & \cdots & \partial_{n-1}z_n & \psi_{P_n} \\
\end{matrix}
\right].
\]
We have $Z=Z(x')=(z_1(x_1,\cdots ,x_{n-1}),\cdots ,
z_n(x_1,\cdots ,x_{n-1}))$ and $\psi \left( Z(x_1,\cdots ,x_{n-1})\right)=0$,
so differentiating with respect to $x_j$ we get
$\sum_{i=1}^{n} \psi_{P_i} \partial_{x_j}z_i=0$ and so
\[
\partial_{x_j}z_n=-\dfrac{1}{\psi_{P_n}} \sum_{i=1}^{n-1} \psi_{P_i} \partial_{x_j}z_i,
\qquad
\text{ for $j=1,\cdots ,n-1$.} 
\]
Inserting these into $\det J$ yields
\begin{equation}\label{eq:Jacobiangeneralcase}
\det J=\dfrac{|D\psi|}{\psi_{P_n}}\det \left[ 
\begin{matrix}
\partial_1 z_1 & \cdots & \partial_{n-1}z_1 \\
\partial_1 z_2 & \cdots & \partial_{n-1}z_2 \\
\vdots & \vdots & \vdots \\
\partial_1 z_{n-1} & \cdots & \partial_{n-1}z_{n-1} \\
\end{matrix}
\right].
\end{equation}
For $Z\in D$, $x\in \Omega$, with $Tx=Z$, let $y=\dfrac{Z-\rho(x)x}{|Z-\rho(x)x|}$,
and let $\ell$ be the line with direction $y$ passing through the point $\rho(x)x$.
Let $W$ be the intersection point between $\ell$ and the hyperplane $x_n=0$.
We write 
\[
W=\rho(x)x+d_0 y.
\] 
The existence of $W$ follows from (H1) and (H2) if $e_n\in \Omega$ and $\tau<\kappa$.
Indeed, we notice that $\ell$ does not intersect $x_n=0$ if and only if
$\ell$ is perpendicular to $e_n$, or equivalently the
vector $y$  is perpendicular to $e_n$.
From (H1), $Z/|Z|$ is contained in the solid cone with axis $e_n$ with opening $\kappa+\tau$ for $Z\in D$,
i.e. $Z/|Z|\cdot e_n\geq \kappa+\tau$. On the other hand, since the refractor is contained in $Q_{r_0}$, we have
$
A=\left| \dfrac{Z}{|Z|}-y\right|
\leq \dfrac{2\rho(x)}{|Z-\rho(x)x|}\leq \dfrac{2r_0}{|Z-\rho(x)x|}.$
Now from (H2)
\[
|Z-\rho(x)x|\geq |Z|-r_0\geq \text{dist }(D,0)-r_0\geq 
r_0\left(\dfrac{1+\kappa -\tau}{\tau}\right),
\]
so
$
A\leq \dfrac{2\tau}{1+\kappa -\tau}$.
Therefore
\[
y\cdot e_n=
\left(y-\dfrac{Z}{|Z|}\right)\cdot e_n +\dfrac{Z}{|Z|}\cdot e_n
\geq 
-\dfrac{2\tau}{1+\kappa -\tau}+\kappa +\tau=(\kappa-\tau)\dfrac{1+\kappa +\tau}{1+\kappa -\tau}:=r.
\]
Thus, if $\tau<\kappa$, then $r>0$,
and so the vector $y$ lies in a cone with axis $e_n$ that does not intersect the plane $x_n=0$ which proves the existence of $W$.
Similarly, if $\kappa>1$, then the existence of $W$ also follows from (H3) and (H4) assuming again $e_n\in \Omega$ and that $\tau$ is sufficiently small.

A calculation as in the case when $D\subset \{x_n=0\}$ yields that
\[
W=F\left(x',\rho(x'),D\rho(x')\right) \hat D(\rho^2).
\]
If we write 
\[
Z=\rho(x')x+ t(x')\left( W-\rho(x')x\right),
\]
then $Z-\rho(x')x=t(x')\left( W-\rho(x')x\right)$ and making the dot product of
this equation with $e_n$ we obtain that
\[
t(x')=\dfrac{\rho(x')x_n - z_n}{\rho(x')x_n},
\]
where $x_n=\sqrt{1-|x'|^2}$.
In view of \eqref{eq:Jacobiangeneralcase}, we only need to calculate
$\partial_j z_i$ for $1\leq i,j\leq n-1$.
Set
$z'=(z_1,\cdots ,z_{n-1})$ and $w'=(w_1,\cdots ,w_{n-1})$, we get
\begin{equation*}
\partial_j z_i
=
\partial_j t\left( w_i-\rho(x')x_i\right)+(1-t)\partial_j\left( \rho(x')x_i\right)
+
t \partial_j w_i,
\qquad i,j=1,\cdots ,n-1.
\end{equation*}
This equation written in matrix form is
\begin{equation}\label{eq:derivativeofzwitht}
Dz'=(w' -\rho x')\otimes Dt + (1-t) D(\rho x') + t Dw'.
\end{equation}
We calculate $\partial_j t$.
Differentiating the equation $\psi\left( t W + (1-t) \rho(x')x \right)=0$ with respect to 
$x_j$, $1\leq j\leq n-1$, we get
\begin{align*}
\partial_j t\, D\psi \cdot (W- \rho x)
&=
-(1-t)\sum_{i=1}^n \psi_{P_i}
\partial_j\left( \rho x_i\right)
-
t\sum_{i=1}^n \psi_{P_i}
\partial_jw_i\\
&=
- 
\sum_{i=1}^{n-1} \psi_{P_i}
\left( (1-t) \partial_j\left( \rho x_i\right) + t \partial_jw_i\right)
-
(1-t) \psi_{P_n}
\partial_j\left( \rho x_n\right),
\end{align*}
since $w_n=0$.
So we have the formula for the vector $Dt=(\partial_1t,\cdots, \partial_{n-1}t)$
\begin{equation}\label{eq:formulaforDt}
Dt=
-\beta \tilde D \psi
 \left( (1-t) D(\rho  x') + t Dw' \right) -\beta
(1-t)  \psi_{P_n} D(\rho  x_n),
\end{equation}
where $\beta=\dfrac{1}{D\psi \cdot (W- \rho x)}$, and we used the notation
$\tilde D \psi=(\psi_{P_1},\cdots, \psi_{P_{n-1}})$.
For simplicity in the calculation let $A=(1-t) D(\rho  x') + t Dw'$.
So from  
\eqref{eq:derivativeofzwitht}
\[
Dz'=(w' -\rho x')\otimes Dt + A,
\]
and from \eqref{eq:formulaforDt}
\[
Dt=
-\beta\tilde D \psi A-\beta
(1-t)  \psi_{P_n} D(\rho x_n).
\]
So inserting \eqref{eq:formulaforDt} into \eqref{eq:derivativeofzwitht}
and using the formula $\xi\otimes(\eta A)=(\xi\otimes \eta)A$\footnote{$\xi\otimes \eta=\xi^t \eta$ as multiplication of matrices.}, 
we obtain
\begin{align*}
Dz'&=
\left( I -\beta (w' -\rho x') \otimes \tilde D\psi\right)A
-\beta
(1-t)  \psi_{P_n} (w' -\rho x')\otimes D(\rho  x_n).
\end{align*}
Letting $\mathcal B=  I -\beta (w' -\rho x')\otimes \tilde D\psi$,
from \eqref{eq:formulaformatrices} we get
\[
\mathcal B^{-1}
=
I + \dfrac{\beta  (w' -\rho x')\otimes \tilde D\psi}{1-\beta (w' -\rho x')\cdot \tilde D \psi }.
\]
Since $\dfrac{1}{\beta}
=D\psi \cdot (W-\rho x)
=
\tilde D\psi \cdot (w'-\rho x')-\psi_{P_n} \rho x_n$, it follows that
\[
\mathcal B^{-1}
=
I - \dfrac{(w' -\rho x')\otimes \tilde D\psi}{\psi_{P_n} \rho x_n}.
\]
Therefore
\begin{align*}
Dz'&=\mathcal B \left(A
-\beta
(1-t)  \psi_{P_n} \mathcal B^{-1}\left( (w'-\rho x')\otimes D(\rho x_n) \right)\right).
\end{align*}
We have
\begin{align*}
\mathcal B^{-1}\left( (w'-\rho x')\otimes D(\rho x_n)\right)
&=\left( I - \dfrac{(w' -\rho x')\otimes \tilde D\psi}{\psi_{P_n} \rho x_n}\right)\left((w'-\rho x') \otimes D(\rho x_n)\right)\\
&=
(w'-\rho x')\otimes D(\rho x_n)\\
&\qquad -
\dfrac{1}{\psi_{P_n} \rho x_n} \left((w' -\rho x') \otimes \tilde D\psi \right)
\left( (w'-\rho x')\otimes D(\rho x_n)\right)\\
&=
(w'-\rho x')\otimes D(\rho x_n)\\
&\qquad -
\dfrac{1}{\psi_{P_n} \rho x_n}
\left( \tilde D\psi\cdot (w' -\rho x') \right)
(w'-\rho x')\otimes D(\rho x_n)\\
&=
(w'-\rho x')\otimes D(\rho x_n)\\
&\qquad -
\dfrac{1}{\psi_{P_n} \rho x_n}
\left( \dfrac{1}{\beta} + \psi_{P_n} \rho x_n \right)
(w'-\rho x') \otimes D(\rho x_n)\\
&=
 -
\dfrac{1}{\beta \psi_{P_n} \rho x_n}
(w'-\rho x')\otimes D(\rho x_n).
\end{align*}
Consequently
\begin{align*}
Dz'&=\mathcal B\left(A
+
(1-t)  \dfrac{1}{\rho x_n} (w'-\rho x')\otimes D(\rho x_n)\right)\\
&=\mathcal B
\left(t Dw' + (1-t) D(\rho x')  
+
(1-t)  \dfrac{1}{\rho x_n} (w'-\rho x')\otimes D(\rho x_n)\right).\end{align*}
We recall from \eqref{eq:formulaforDz} that 
\[
Dw'
=2\rho F(x',\rho,D\rho)\,\mathcal M \,D^2\rho +B
\] 
with $\mathcal M$ from \eqref{eq:definitionofmathcalM}, so
\begin{align*}
Dz'&=\mathcal B\left(t  2\rho F\mathcal M \,D^2\rho+t B + (1-t) D(\rho x')  
+
(1-t)  \dfrac{1}{\rho x_n}  (w'-\rho x') \otimes D(\rho x_n)\right)\\
&=
\mathcal B 2t\rho F \mathcal M 
\left(D^2\rho +\dfrac{1}{2t\rho F} \mathcal M^{-1} \left(
t B + (1-t)\left( D(\rho x')  
+
  \dfrac{1}{\rho x_n} (w'-\rho x') \otimes D(\rho x_n)\right)\right)\right).
\end{align*}
We have $\det \mathcal B= 1-\beta (w'-\rho x')\cdot \tilde D \psi =-\beta \psi_{P_n} \rho x_n$ and $\det \mathcal M=1+\dfrac{1}{F}(D\rho\cdot D_pF)$.
So 
\[
\det Dz'=
(2t\rho F)^{n-1}(-\beta \psi_{P_n} \rho x_n)
\left(1+\dfrac{1}{F}(D\rho\cdot D_pF) \right)
\det \left(D^2\rho +\mathcal A \right),
\]
with 
\[
\mathcal A=\mathcal A(x', \rho, D\rho)
=
\dfrac{1}{2t\rho F} \mathcal M^{-1} \left(
t B + (1-t)\left( D(\rho x')  
+
  \dfrac{1}{\rho x_n} (w'-\rho x') \otimes D(\rho x_n)\right)\right).
  \]
From \eqref{eq:firstpde} and \eqref{eq:Jacobiangeneralcase} we then obtain that $\rho$ satisfy the pde of Monge-Amp\`ere type
\begin{equation}\label{eq:monge-amperegeneralcase}
\det \left(D^2\rho +\mathcal A \right)
=
\dfrac{f}{g(1-|x'|^2) |D\psi|( 2t )^{n-1}\rho^{n}(-\beta )F^{n-2}(F+ D\rho\cdot D_p F)}.
\end{equation}

\normalsize
\providecommand{\bysame}{\leavevmode\hbox to3em{\hrulefill}\thinspace}
\providecommand{\MR}{\relax\ifhmode\unskip\space\fi MR }
\providecommand{\MRhref}[2]{%
  \href{http://www.ams.org/mathscinet-getitem?mr=#1}{#2}
}
\providecommand{\href}[2]{#2}


\begin{thebibliography}{CGH08}

\bibitem[CGH08]{caffarelli-gutierrez-huang:antennaannals}
L.~A. Caffarelli, C.~E. Guti\'errez, and Qingbo Huang, \emph{On the regularity
  of reflector antennas}, Ann. of Math. \textbf{167} (2008), 299--323.

\bibitem[CH09]{caffarelli-huang:nonisotropicreflector}
L.~A. Caffarelli and Qingbo Huang, \emph{Reflector problem in $\bf {R}^n$
  endowed with non-{E}uclidean norm}, Arch. Rational Mech. Anal. \textbf{193}
  (2009), no.~2, 445--473.

\bibitem[CO08]{Caffarelli-Oliker:weakantenna}
L.~A. Caffarelli and V.~Oliker, \emph{Weak solutions of one inverse problem in
  geometric optics}, J. of Math. Sciences \textbf{154} (2008), no.~1, 37--46.

\bibitem[GH08]{gutierrez-huang:nearfielrefractorermanno6thbirth}
C.~E. Guti\'errez and Qingbo Huang, \emph{The near field refractor}, Geometric
  Methods in PDE's, Conference for the 65th birthday of E. Lanconelli, Lecture
  Notes of Seminario Interdisciplinare di Matematica, vol.~7, Universita` degli
  Studi della Basilicata, Potenza, Italy, 2008, pp.~175--188.

\bibitem[GH09]{gutierrez-huang:farfieldrefractor}
\bysame, \emph{The refractor problem in reshaping light beams}, Arch. Rational
  Mech. Anal. \textbf{193} (2009), no.~2, 423--443.

\bibitem[Gut01]{Gut:book}
C.~E. Guti\'errez, \emph{The {M}onge--{A}mp\`ere equation}, Birkh\"auser,
  Boston, MA, 2001.

\bibitem[KO97]{kochengin-oliker:nearfieldreflector}
S.~Kochengin and V.~Oliker, \emph{Determination of reflector surfaces from
  near-field scattering data}, Inverse Problems \textbf{13} (1997), 363--373.

\bibitem[KW10]{karakhanyan-wang:nearfieldreflector}
A.~Karakhanyan and Xu-Jia Wang, \emph{On the reflector shape design}, J. Diff.
  Geom. \textbf{84} (2010), 561--610.

\bibitem[Wan96]{Wang:antenna}
Xu-Jia Wang, \emph{On the design of a reflector antenna}, Inverse Problems
  \textbf{12} (1996), 351--375.

\end{thebibliography}
\end{document}